\documentclass[10pt,a4paper]{amsart}

\usepackage{amsthm,amsmath,amssymb,amsxtra,amscd,enumerate}
\usepackage[all]{xy}
\usepackage{hyperref}
\numberwithin{equation}{section}
\allowdisplaybreaks[3]

\theoremstyle{plain}
\newtheorem{thm}{Theorem}[section]
\newtheorem*{mainthm}{Main Theorem}
\newtheorem{prop}[thm]{Proposition}
\newtheorem{lem}[thm]{Lemma}

\theoremstyle{definition}
\newtheorem{de}[thm]{Definition}
\newtheorem{rem}[thm]{Remark}
\newtheorem{set}[thm]{Setting}
\newtheorem{ex}[thm]{Example}

\newcommand{\bb}{\mathbb}
\newcommand{\cal}{\mathcal}
\newcommand{\ovl}{\overline}
\newcommand{\wtl}{\widetilde}

\DeclareMathOperator*{\Hom}{Hom}
\DeclareMathOperator{\Ad}{Ad}
\DeclareMathOperator{\ad}{ad}
\DeclareMathOperator{\rank}{rank}

\DeclareMathOperator{\Spec}{Spec}

\begin{document}


\title{Localization of Cohomological Induction}


\author{Yoshiki Oshima}
\address{Kavli IPMU (WPI), The University of Tokyo, 5-1-5 Kashiwanoha, Kashiwa, 277-8583 Chiba, Japan}
\email{yoshiki.oshima@ipmu.jp}

\thanks{2010 MSC: Primary 22E47; Secondary 14F05, 20G20.}

\keywords{Harish-Chandra module, reductive group, algebraic group,
D-module, cohomological induction, Zuckerman functor.}


\maketitle

\begin{abstract}
We give a geometric realization of cohomologically induced
 $(\mathfrak{g},K)$-modules.
Let $(\mathfrak{h}, L)$ be a subpair of $(\mathfrak{g},K)$.
The cohomological induction is an algebraic construction of
 $(\mathfrak{g},K)$-modules from a $(\mathfrak{h},L)$-module $V$.
For a real semisimple Lie group, the duality theorem by 
 Hecht, Mili{\v{c}}i{\'c}, Schmid, and Wolf relates
 $(\mathfrak{g},K)$-modules cohomologically induced from a Borel subalgebra
 with ${\mathcal D}$-modules on the flag variety of $\frak{g}$.
In this article we extend the theorem for more general pairs
 $(\mathfrak{g},K)$ and $(\mathfrak{h},L)$.
We consider the tensor product of a ${\mathcal D}$-module and a certain module
 associated with $V$, and prove that
 its sheaf cohomology groups are isomorphic
 to cohomologically induced modules.
\end{abstract}

\section{Introduction}
The aim of this article is to realize cohomologically induced modules
 as sheaf cohomology groups of certain sheaves on homogeneous spaces.

Cohomological induction is defined as a functor between the categories
 of $(\frak{g},K)$-modules.
Let $(\frak{g},K)$ be a pair (Definition~\ref{de:pair}) and
 let ${\cal C}(\frak{g}, K)$ be the category of $(\frak{g},K)$-modules.
Suppose that $(\frak{h},L)$ is a subpair of $(\frak{g},K)$ and that
 $K$ and $L$ are reductive.
Following the book by Knapp and Vogan \cite{KnVo}, we define the functors
 $P_{\frak{h},L}^{\frak{g},K}$ and $I_{\frak{h},L}^{\frak{g},K}
 :{\cal C}(\frak{h},L)\to {\cal C}(\frak{g},K)$
 as $V\mapsto R(\frak{g},K)\otimes_{R(\frak{h},L)} V$ and 
 $V\mapsto (\Hom_{R(\frak{h},L)}(R(\frak{g},K),V))_K$, respectively.
See Section~\ref{sec:coh} for the definition of the
 Hecke algebra $R(\frak{g},K)$.
When $\frak{g}=\frak{h}$, the functor
 $I_{\frak{h},L}^{\frak{g},K}=I_{\frak{g},L}^{\frak{g},K}$
 is called the Zuckerman functor.
Let $V$ be a $(\frak{h},L)$-module.
We define the cohomologically induced module as the $(\frak{g},K)$-module
 $(P_{\frak{h},L}^{\frak{g},K})_j(V)$ for $j\in \bb{N}$, where
 $(P_{\frak{h},L}^{\frak{g},K})_j$ is the $j$-th left derived functor
 of $P_{\frak{h},L}^{\frak{g},K}$.
Similarly, we define $(I_{\frak{h},L}^{\frak{g},K})^j(V)$,
 where $(I_{\frak{h},L}^{\frak{g},K})^j$ is the $j$-th right derived functor
 of $I_{\frak{h},L}^{\frak{g},K}$.

This construction produces a large family of representations of
 real reductive Lie groups.
Let $G_\bb{R}$ be a real reductive Lie group
 with a Cartan involution $\theta$ so that
 the group of fixed points $K_\bb{R}:=(G_\bb{R})^\theta$
 is a maximal compact subgroup.
Let $\frak{g}$ be the complexified Lie algebra of $G_\bb{R}$
 and $K$ the complexification of $K_\bb{R}$.
We give examples of cohomologically induced $(\frak{g},K)$-modules below.
In the following three examples we
 suppose that $\frak{h}$ is a parabolic subalgebra of $\frak{g}$
 and $L$ is a maximal reductive subgroup
 of the normalizer $N_{K}(\frak{h})$.
We also suppose that $V$ is a one-dimensional $(\frak{h},L)$-module.
\begin{itemize}
\item
We assume the rank condition
 $\rank \frak{g}= \rank K$
 and that $\frak{h}$ is a $\theta$-stable Borel subalgebra.
Then under a certain positivity condition on $V$,
 $(P_{\frak{h},L}^{\frak{g},K})_s(V)$
 (or $(I_{\frak{h},L}^{\frak{g},K})^s(V)$)
 is the underlying $(\frak{g},K)$-module
 of a discrete series representation of $G_\bb{R}$.
Here $s=\frac{1}{2}\dim K/L$.
\item
Suppose that $\frak{h}$ is a $\theta$-stable parabolic subalgebra.
Then the $(\frak{g},K)$-module
 $(P_{\frak{h},L}^{\frak{g},K})_s(V)$
 (or $(I_{\frak{h},L}^{\frak{g},K})^s(V)$)
 is called Zuckerman's derived functor module $A_\frak{h}(\lambda)$.
Here $s=\frac{1}{2}\dim K/L$.
\item
Let $P_\bb{R}$ be a parabolic subgroup of $G_\bb{R}$
 and suppose that $\frak{h}$ is its complexified Lie algebra.
Then  $(P_{\frak{h},L}^{\frak{g},K})_0(V)$
 (or $(I_{\frak{h},L}^{\frak{g},K})^0(V)$)
 is the underlying $(\frak{g},K)$-module
 of a degenerate principal series representation
 realized on the real flag variety $G_\bb{R}/P_\bb{R}$.
\end{itemize}

The localization theory 
 by Be{\u\i}linson--Bernstein~\cite{BB81}
 provides another important
 construction of $(\frak{g},K)$-modules.
It gives a realization of $(\frak{g},K)$-modules as
 $K$-equivariant twisted ${\cal D}$-modules
 on the full flag variety $X$ of $\frak{g}$.

These two constructions are related by a result of
 Hecht--Mili$\rm{\check{c}}$i$\rm{\acute{c}}$--Schmid--Wolf~\cite{HMSW}.
We now recall their theorem.
Let $G_\bb{R}$ be a connected real reductive Lie group and 
 let $(\frak{g},K)$ be the pair defined in the above way.
Suppose that $\frak{h}=\frak{b}$ is a Borel subalgebra of $\frak{g}$
 and $L$ is a maximal reductive subgroup of
 the normalizer $N_{K}(\frak{b})$.
Let $X$ be the full flag variety of $\frak{g}$,
 $Y$ the $K$-orbit through $\frak{b}\in X$,
 and $i:Y\to X$ the inclusion map.
Suppose that $V$ is a $(\frak{b},L)$-module and $\frak{b}$ acts as
 scalars given by $\lambda\in\frak{b}^*:=\Hom_\bb{C}(\frak{b},\bb{C})$.
Write ${\cal V}_Y$ for the corresponding locally free ${\cal O}_Y$-module
 on $Y$ and view it as a twisted ${\cal D}$-module.
Let ${\cal D}_{X,\lambda}$ be the ring of twisted differential operators
 on $X$ corresponding to $\lambda$ and 
 define the ${\cal D}_{X,\lambda}$-module
 direct image $i_+ {\cal V}_Y$.
Then the following is called the duality theorem:
\begin{thm}[\cite{HMSW}]
\label{dual}
There is an isomorphism of $(\frak{g},K)$-modules
\begin{align*}
{\rm H}^s(X, i_+{\cal V}_Y)^* \simeq
 (I_{\frak{b},L}^{\frak{g},K})^{u-s}
\Bigl(V^*\otimes\bigwedge^{\rm top}(\frak{g}/\frak{b})^*\Bigr)
\end{align*}
for $s\in \bb{N}$ and $u=\dim K/L -\dim Y$.
Here the left side is the $K$-finite dual of the $(\frak{g},K)$-module
 ${\rm H}^s(X, i_+{\cal V}_Y)$.
\end{thm}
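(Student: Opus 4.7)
The plan is to reduce both sides of the claimed isomorphism to a common algebraic expression via the Beilinson--Bernstein localization theorem. The key input is that $\mathrm{H}^*(X, i_+\mathcal{V}_Y)$ is the derived composition of the $\mathcal{D}$-module direct image $i_+$ and the global sections functor $\mathrm{R}\Gamma(X, -)$, both carrying natural $K$-equivariant structures. For appropriate $\lambda$, Beilinson--Bernstein vanishing makes $\mathrm{R}\Gamma$ exact on $K$-equivariant twisted $\mathcal{D}$-modules, so all the derived degrees come from $i_+$ along the locally closed embedding $i$ and from the subsequent passage from $L$-equivariance to $K$-equivariance.

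First I would identify $\mathrm{H}^0(X, i_+\mathcal{V}_Y)$ and its $K$-finite dual directly. Since $i\colon Y\to X$ is a locally closed $K$-equivariant embedding with $Y \simeq K/L$, the sheaf $i_+\mathcal{V}_Y$ is a standard Harish-Chandra sheaf. By Frobenius reciprocity at the base point $\mathfrak{b}\in Y$, its global sections form a $(\mathfrak{g},L)$-module produced from $V$, up to a $\rho$-shift and the twist by $\bigwedge^{\mathrm{top}}(\mathfrak{g}/\mathfrak{b})^*$ that arises when converting left $\mathcal{D}$-modules on $Y$ to right $\mathcal{D}$-modules (the canonical bundle of $Y$). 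Taking $K$-finite duals on the geometric side, which corresponds to Verdier duality for $\mathcal{D}$-modules, then exchanges $P_{\mathfrak{b},L}^{\mathfrak{g},K}$ with $I_{\mathfrak{b},L}^{\mathfrak{g},K}$ and explains the appearance of $V^*$. For the higher degrees, I would show that $\mathrm{R}\Gamma\circ i_+$ on $K$-equivariant objects localizes to the derived change of equivariance from $L$ to $K$, which is precisely the Zuckerman functor $I_{\mathfrak{g},L}^{\mathfrak{g},K}$; composing with the exact ordinary induction from $(\mathfrak{b}, L)$ to $(\mathfrak{g}, L)$ then yields the right derived functors of $I_{\mathfrak{b},L}^{\mathfrak{g},K}$. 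The codimension shift $u = \dim K/L - \dim Y$ enters through Verdier duality since $Y$ is in general of strictly smaller dimension than $K/L$.

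The main obstacle would be the precise bookkeeping of shifts and twists: the $\rho$-shift implicit in the definition of $\mathcal{D}_{X,\lambda}$, the conversion between left and right $\mathcal{D}$-modules that produces the factor $\bigwedge^{\mathrm{top}}(\mathfrak{g}/\mathfrak{b})^*$, and the codimension shift $u$ together with the degree reversal $s \mapsto u-s$ under duality. Proving that the spectral sequence computing $\mathrm{H}^*(X, i_+\mathcal{V}_Y)$ degenerates and matches the right derived functors of $I_{\mathfrak{b},L}^{\mathfrak{g},K}$ degree by degree is the technical heart of the argument; it requires an equivariant refinement of the Beilinson--Bernstein machinery adapted to locally closed $K$-orbits with prescribed stabilizer $L$.
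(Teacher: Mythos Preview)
Your proposal has a genuine gap and a misidentification of where the shift $u$ comes from.

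\textbf{The gap.} You invoke Be{\u\i}linson--Bernstein vanishing to make $\mathrm{R}\Gamma(X,-)$ exact, but this is only available when $\lambda$ is dominant (and to make $\Gamma$ an equivalence one needs regularity as well). Theorem~\ref{dual} is stated for an arbitrary character $\lambda\in\frak{b}^*$, so a proof built on vanishing of higher sheaf cohomology cannot cover the general case. You cannot separate the derived degrees of $\mathrm{R}\Gamma\circ i_+$ into ``degrees from $i_+$'' and ``degrees from change of equivariance'' without this exactness, so the spectral-sequence collapse you allude to does not hold in general.

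\textbf{The shift $u$.} You explain $u=\dim K/L-\dim Y$ as a codimension shift arising from Verdier duality. It is not a codimension of $Y$ in $X$ at all: since $Y=K/M$ with $M=L\ltimes U$, one has $u=\dim M/L=\dim U$, the dimension of the fiber of the affine fibration $\pi_K:K/L\to K/M$. In the paper this shift appears because $(\pi_K)_+\Omega_{K/L}^\vee\simeq {\cal L}[u]$, i.e.\ the relative de Rham complex along the $\bb{C}^u$-fibers produces the degree shift. Verdier duality is not the source.

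\textbf{Comparison with the paper.} The original proof in \cite{HMSW}, summarized in the introduction, writes both sides using standard resolutions (the Koszul/relative de Rham resolution on the geometric side, the standard complex for cohomological induction on the algebraic side) and matches the complexes directly. The paper itself recovers Theorem~\ref{dual} by first proving the undualized isomorphism \eqref{eq:HMSWisom} as a special case of its main theorem: one factors $i$ through $K/L\xrightarrow{\tilde{\imath}} G/L\xrightarrow{\pi} X$, uses affineness of $K/L$ and $G/L$ to compute global sections explicitly via Lemma~\ref{affiso}, and compares boundary maps with those of the standard resolution of $V\otimes\bigwedge^{\rm top}(\frak{g}/\frak{b})$. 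In the Borel case $Y$ is affinely embedded in $X$, so $R^t i_+=0$ for $t>0$ and the spectral sequence \eqref{spseq} collapses; then one applies the algebraic duality $(P_{\frak{b},L}^{\frak{g},K})_j(V)^*\simeq (I_{\frak{b},L}^{\frak{g},K})^j(V^*)$ to pass to Theorem~\ref{dual}. No assumption on $\lambda$ is needed anywhere. The more conceptual route you are gesturing at is closer in spirit to Mili{\v{c}}i{\'c}--Pand{\v{z}}i{\'c}~\cite{MiPa}, who use equivariant derived categories rather than Be{\u\i}linson--Bernstein vanishing; if you want to avoid explicit resolutions, that is the framework to work in.
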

In \cite{HMSW},
 they proved the theorem by describing the cohomology groups
 of the both sides by
 using standard resolutions and giving an isomorphism between
 the two complexes.
We note that by using the dual isomorphism (\cite[Theorem 3.1]{KnVo})
 $(P_{\frak{h},L}^{\frak{g},K})_j(V)^*
\simeq (I_{\frak{h},L}^{\frak{g},K})^j(V^*)$,
Theorem~\ref{dual} is deduced from
\begin{align}
\label{eq:HMSWisom}
{\rm H}^s(X, i_+{\cal V}_Y) \simeq
 (P_{\frak{b},L}^{\frak{g},K})_{u-s}
\Bigl(V\otimes\bigwedge^{\rm top}(\frak{g}/\frak{b})\Bigr).
\end{align} 

The relation between the cohomological induction
 and the localization has been studied further
 (see \cite{Bien}, \cite{Chang}, \cite{Kit}, \cite{MiPa}, \cite{Sch91}).
Mili{\v{c}}i{\'c}--Pand{\v{z}}i{\'c}~\cite{MiPa} gave a more conceptual proof
 of Theorem~\ref{dual} by using equivariant derived categories.
In \cite{Chang} and \cite{Kit}, Theorem~\ref{dual} was extended to the case
 of partial flag varieties.

In this article we will realize geometrically
 the cohomologically induced modules
 in the following setting.
Let $i:K\to G$ be a homomorphism between complex linear algebraic groups.
Suppose that $K$ is reductive and the kernel of $i$ is finite
 so that the pair $(\frak{g},K)$ is defined.
Let $H$ be a closed subgroup of $G$.
Put $M:=i^{-1}(H)$ and take a Levi decomposition $M=L\ltimes U$.
We write $i:Y=K/M \to G/H=X$ for the natural immersion.
Let $V$ be a $(\frak{h},M)$-module.
We see $V$ as a $(\frak{h}, L)$-module by restriction and
 define the cohomologically induced module
 $(P_{\frak{h},L}^{\frak{g},K})_j(V)$.
In this generality, we can no longer realize it as
 a (twisted) ${\cal D}$-module on $X=G/H$.
Instead we use the tensor product of an $i^{-1}{\cal D}_X$-module
 and an $i^{-1}{\cal O}_X$-module associated with $V$ which is
 equipped with a $(\frak{g},K)$-action
 (see Definition~\ref{de:assmod}).
We now state the main theorem of this article.
\begin{mainthm}[Theorem~\ref{loccoh}]
Suppose that ${\cal V}$ is an $i^{-1}\wtl{\frak{g}}_X$-module
 associated with $V$ (see Definition~\ref{de:assmod}).
Then we have an isomorphism of $(\frak{g},K)$-modules
\begin{align*}
{\rm H}^s(Y, i^{-1}i_+{\cal L}
 \otimes_{i^{-1}{\cal O}_X} {\cal V}) \simeq
 (P_{\frak{h},L}^{\frak{g},K})_{u-s}
 \Bigl(V\otimes \bigwedge^{\rm top}(\frak{g}/\frak{h})\Bigr)
\end{align*}
for $s\in\bb{N}$ and $u = \dim U$.
\end{mainthm}
Here ${\cal L}$ is the invertible sheaf on $Y$ defined in the beginning
 of Section~\ref{sec:loc} and
 the direct image $i_+{\cal L}$
 in the categories of ${\cal D}$-modules
 is defined as 
\[
i_*\bigl( ({\cal L}\otimes_{{\cal O}_Y} {\Omega_Y})
\otimes_{{\cal D}_Y}  i^*{\cal D}_X\bigr)
 \otimes_{{\cal O}_X} \Omega_X\spcheck.
\]
Hence its inverse image $i^{-1}i_+{\cal L}$
 as a sheaf of abelian groups is given by 
\[
({\cal L}\otimes_{{\cal O}_Y} {\Omega_Y})
\otimes_{{\cal D}_Y}  i^*{\cal D}_X
 \otimes_{i^{-1}{\cal O}_X} i^{-1}\Omega_X\spcheck.
\]
We note that if $V$ comes from an algebraic $H$-module, then
 we can take ${\cal V}$ to be $i^{-1}{\cal V}_X$,
 where ${\cal V}_X$ is a $G$-equivariant locally free
 ${\cal O}_X$-module with typical fiber $V$
 (Example~\ref{vectbdle}).

The work of this article was motivated by the study of branching laws
 of representations.
In \cite{Os11} a special case of Theorem~\ref{loccoh} was proved and 
 it was used to get an estimate of the restriction of $A_\frak{q}(\lambda)$
 to reductive subalgebras.

This article is organized as follows.
In Section~\ref{sec:coh} we recall the definition of cohomological induction
 following \cite{KnVo}.
In Section~\ref{sec:Ind} we give a definition of
 an $i^{-1}{\cal O}$-module associated with a $(\frak{h},M)$-module $V$.
We state and prove the main theorem (Theorem~\ref{loccoh}) in
Section~\ref{sec:loc}.
Our proof basically follows the proof of the duality theorem
 in \cite{HMSW}.
Section~\ref{sec:const} is devoted to the construction of
 an $i^{-1}{\cal O}$-module associated with a $(\frak{h},M)$-module $V$,
 which can be used for the geometric realization
 of cohomologically induced modules.
In Section~\ref{sec:tdo}, we see that the module
 $i^{-1}i_+{\cal L} \otimes {\cal V}$ can be viewed as 
 a twisted ${\cal D}$-module if $\frak{h}$ acts as scalars on $V$.
Therefore, Theorem~\ref{loccoh} becomes the isomorphism
 \eqref{eq:HMSWisom} and hence Theorem~\ref{dual} in the particular setting.

\section{Cohomological induction}\label{sec:coh}
In this section we recall the definition
 of the cohomological induction 
following \cite{KnVo}.

Let $K$ be a complex reductive algebraic group
 and let $K_\bb{R}$ be a compact real form.
Since any locally finite action of $K_\bb{R}$
 is uniquely extended to an algebraic action of $K$,
 the locally finite $K_\bb{R}$-modules are identified with
 the algebraic $K$-modules.
Define the Hecke algebra $R(K_\bb{R})$ as the space of
 $K_\bb{R}$-finite distributions on $K_\bb{R}$. 
For $S\in R(K_\bb{R})$, the pairing with a smooth function $f$
 on $K_\bb{R}$ is written as  
\[
\int_{K_\bb{R}} f(k) dS(k).
\]
The product of $S, T\in R(K_\bb{R})$ is given by 
\[
S*T: f\mapsto \int_{K_\bb{R}\times K_\bb{R}} f(kk') dS(k)dT(k').
\]
The associative algebra $R(K_\bb{R})$
 does not have the identity, but has an approximate identity
 (see \cite[Chapter I]{KnVo}).
The locally finite $K_\bb{R}$-modules are identified with the 
 approximately unital left $R(K_\bb{R})$-modules. 
The action map $R(K_\bb{R})\times V\to V$ is given by
\[
(S, v)\mapsto \int_{K_\bb{R}} kv \,dS(k)
\]
for a locally finite $K_\bb{R}$-module $V$. 
Here, $kv$ is regarded as a smooth function on $K_\bb{R}$
 that takes values on $V$.
As a $\bb{C}$-algebra, we have a natural isomorphism 
 \[R(K_\bb{R})\simeq \bigoplus_{\tau\in \widehat{K}}
 \operatorname{End}_\bb{C} (V_\tau),\] 
where $\widehat{K}$ is the set of equivalence classes of
 irreducible $K$-modules, and $V_\tau$ is a representation space
 of $\tau\in {\widehat{K}}$.
Hence $R(K_\bb{R})$ depends only on the complexification $K$
 up to natural isomorphisms, 
 so in what follows, we also denote $R(K_\bb{R})$ by $R(K)$.

\begin{de}
\label{de:pair}
Let $\frak{g}$ be a Lie algebra and $K$ a complex
 linear algebraic group such that the Lie algebra $\frak{k}$
 of $K$ is a subalgebra of $\frak{g}$.
Suppose that a homomorphism $\phi:K\to {\operatorname{Aut}}(\frak{g})$
 of algebraic groups is given, where ${\operatorname{Aut}}(\frak{g})$
 is the automorphism group of $\frak{g}$.
We say $(\frak{g},K)$ is a \emph{pair} if 
\begin{itemize}
\item
$\phi(\cdot)|_\frak{k}$ is equal to the adjoint action
 ${\Ad}_\frak{k}(K)$ of $K$, and
\item
the differential of $\phi$ is equal to the adjoint action 
${\ad}_\frak{g}(\frak{k})$.
\end{itemize}
\end{de}

Let $i: K \to G$ be a homomorphism of complex linear algebraic groups
 with finite kernel and let $\frak{g}$ be the Lie algebra of $G$. 
Then $(\frak{g},K)$ with the homomorphism $\phi:={\Ad}\circ i$
 is a pair in the above sense.

\begin{de}
Let $(\frak{g},K)$ be a pair. 
Let $V$ be a complex vector space with a Lie algebra action of $\frak{g}$ and
 an algebraic action of $K$.
We say that $V$ is a \emph{$(\frak{g},K)$-module} if 
\begin{itemize}
\item
the differential of the action of $K$ coincides with
 the restriction of the action of $\frak{g}$ to $\frak{k}$, and
\item
$(\phi(k)\xi)v=k(\xi (k^{-1}(v)))$ for $k\in K$,
 $\xi\in\frak{g}$, and $v\in V$.
\end{itemize}
\end{de}

For a pair $(\frak{g},K)$, we
 denote by ${\cal C}(\frak{g},K)$ the category of $(\frak{g},K)$-modules.
Suppose moreover that $K$ is reductive.
We extend the representation $\phi: K\to \operatorname{Aut}(\frak{g})$ to
 a representation on the universal enveloping algebra
 $\phi: K\to \operatorname{Aut}(U(\frak{g}))$.
We define the Hecke algebra $R(\frak{g},K)$ as
 \[R(\frak{g},K):= R(K)\otimes_{U(\frak{k})} U(\frak{g}).\]
The product is given by
\[(S\otimes \xi)\cdot(T\otimes \eta)=
\sum_i (S * (\langle\xi_i^*, \phi(\cdot)^{-1}\xi\rangle T)\otimes \xi_i \eta)
\]
for $S, T\in R(K)$ and $\xi,\eta\in U(\frak{g})$.
Here $\xi_i$ is a basis of the linear span of $\phi(K)\xi$ and 
 $\xi_i^*$ is its dual basis.
We regard $\langle\xi_i^*, \phi(\cdot)^{-1}\xi\rangle$
 as a function on $K_\bb{R}$.
As in the group case, 
 the $(\frak{g},K)$-modules are identified with the 
 approximately unital left $R(\frak{g}, K)$-modules. 
The action map $R(\frak{g},K)\times V\to V$ is
 given by
\[
(S\otimes \xi,\, v)\mapsto \int_{K_\bb{R}} k(\xi v)\,dS(k)
\]
for a $(\frak{g}, K)$-module $V$.

Let $(\frak{g},K)$ and $(\frak{h},L)$ be pairs in the sense of
 Definition~\ref{de:pair}.
Suppose that $K$ and $L$ are reductive.
Let $i:(\frak{h},L)\to (\frak{g},K)$ be a map between pairs, namely,
 a Lie algebra homomorphism $i_{\rm alg}:\frak{h}\to \frak{g}$ and
 an algebraic group homomorphism $i_{\rm gp}:L\to K$ satisfy the following
 two assumptions.
\begin{itemize}
\item 
The restriction of $i_{\rm alg}$ to the Lie algebra $\frak{l}$ 
 of $L$ is equal to the differential of $i_{\rm gp}$.
\item 
$\phi_K(i_{\rm gp}(l))\circ i_{\rm alg}
 =i_{\rm alg}\circ\phi_L(l)$ for $l\in L$,
 where $\phi_K$ denotes $\phi$ for $(\frak{g},K)$ in
 Definition~\ref{de:pair} and $\phi_L$ denotes $\phi$ for $(\frak{h},L)$.
\end{itemize}
We define the functors
 $P_{\frak{h},L}^{\frak{g},K}, I_{\frak{h},L}^{\frak{g},K}:
 {\cal C}(\frak{h},L)\to {\cal C}(\frak{g},K)$ by 
\begin{align*}
&P_{\frak{h},L}^{\frak{g},K}
:V\mapsto R(\frak{g},K)\otimes_{R(\frak{h},L)} V, \\
&I_{\frak{h},L}^{\frak{g},K}
:V\mapsto ({\Hom}_{R(\frak{h},L)} (R(\frak{g},K),\,V))_{K},
\end{align*}
where $(\cdot)_{K}$ is the subspace of $K$-finite vectors.
Then $P_{\frak{h},L}^{\frak{g},K}$ is right exact
 and $I_{\frak{h},L}^{\frak{g},K}$ is left exact.
Write $(P_{\frak{h},L}^{\frak{g},K})_j$ for the $j$-th
 left derived functor of $P_{\frak{h},L}^{\frak{g},K}$ and
 write $(I_{\frak{h},L}^{\frak{g},K})^j$ for the $j$-th
 right derived functor of $I_{\frak{h},L}^{\frak{g},K}$.
We can see that
 $I_{\frak{h},L}^{\frak{g},K}$ is the right adjoint functor
 of the forgetful functor
\[{\rm For}_{\frak{g},K}^{\frak{h},L}:
{\cal C}(\frak{g},K)\to {\cal C}(\frak{h},L),\quad V\mapsto
R(\frak{g},K)\otimes_{R(\frak{g},K)} V \simeq V\]
and $P_{\frak{h},L}^{\frak{g},K}$ is the left adjoint functor of
 the functor
\[{\rm For}\spcheck{}_{\frak{g},K}^{\frak{h},L}:
{\cal C}(\frak{g},K)\to {\cal C}(\frak{h},L),\quad V\mapsto
 ({\Hom}_{R(\frak{g},K)}(R(\frak{g},K), V))_{L}.\]
For a $(\frak{h},L)$-module $V$, the $(\frak{g},K)$-modules
  $(P_{\frak{h},L}^{\frak{g},K})_j(V)$ 
 and $(I_{\frak{h},L}^{\frak{g},K})^j(V)$
 are called cohomologically induced modules.


\section{${\cal O}$-modules associated with $(\frak{g},K)$-modules}
\label{sec:Ind}

Let $G$ be a complex linear algebraic group
 acting on a variety (or more generally a scheme) $X$.
Let $a:G\times X \to X$ be the action map and $p_2: G\times X \to X$
 the second projection.
Write ${\cal O}_X$ for the structure sheaf of $X$ and
 $a^*$, $p_2^*$ for the inverse image functors as ${\cal O}$-modules.
We say that an ${\cal O}_X$-module ${\cal M}$ is \emph{$G$-equivariant} if
 there is an isomorphism $a^*{\cal M}\simeq p_2^*{\cal M}$
 satisfying the cocycle condition.
For a $G$-equivariant ${\cal O}_X$-module ${\cal M}$,
 the $G$-action on ${\cal M}$
 differentiates to a $\frak{g}$-action on ${\cal M}$.

\begin{de}
\label{eqvb}
Suppose that $H$ is a closed algebraic subgroup
 of $G$ and $X=G/H$ is the quotient variety.
For an algebraic $H$-module $V$, define ${\cal V}_X$ as 
 the $G$-equivariant quasi-coherent ${\cal O}_X$-module 
 that has typical fiber $V$.
\end{de}

The category of $G$-equivariant quasi-coherent ${\cal O}_X$-modules
 is equivalent to the category of algebraic $H$-modules, and
 ${\cal V}_X$ is the ${\cal O}_X$-module which
 corresponds to $V$ via this equivalence.
It also corresponds to the associated bundle $G\times_H V \to G/H$.
The local sections of ${\cal V}_X$ can be identified with
 the $V$-valued regular functions $f$ on open subsets of $G$
 satisfying $f(gh)=h^{-1}\cdot f(g)$ for $h\in H$.
We often use this identification in the following.

Note that ${\cal V}_X$ is locally free if $V$ is finite-dimensional.
Indeed, let $v_1, \dots, v_n$ be a basis of $V$ and
 take local sections $\wtl{v}_1, \dots, \wtl{v}_n$
 such that $\wtl{v}_i(e)=v_i$ for the identity element $e\in G$.
Then the map ${\cal O}_X^{\oplus n}\to {\cal V}_X$ given by
 $(f_i)_{i} \mapsto \sum_{i=1}^n f_i\wtl{v}_i$ is defined
 near the base point $eH\in G/H$ and is an isomorphism on
 some open neighborhood of $eH$.

Suppose that $X$ is a smooth $G$-variety. 
Then the infinitesimal action is defined as a Lie algebra homomorphism
 from the Lie algebra $\frak{g}$ of $G$ to the space of
 vector fields ${\cal T}(X)$ on $X$. 
Denote the image of $\xi\in\frak{g}$ by $\xi_X\in{\cal T}(X)$. 
Then $\xi_X$ gives a first-order differential operator
 on the structure sheaf ${\cal O}_X$. 
Let $\wtl{\frak{g}}_X:={\cal O}_X \otimes_\bb{C} \frak{g}$. 
This module becomes a Lie algebroid in a natural way (see \cite[\S 1.2]{BB93}):
the Lie bracket is defined by 
\[ [f\otimes \xi, g\otimes \eta]
=fg\otimes[\xi,\eta]+f \xi_X(g)\otimes \eta-g\eta_X(f)\otimes \xi
\]
for $f,g \in {\cal O}_X$ and $\xi,\eta\in\frak{g}$.
Here $f\in{\cal O}_X$ means that $f$ is a local section of ${\cal O}_X$.
Similar notation will be used for other sheaves.
Write $U(\wtl{\frak{g}}_X)(\simeq {\cal O}_X\otimes U(\frak{g}))$
 for the universal enveloping algebra of 
$\wtl{\frak{g}}_X$.
Then a $U(\wtl{\frak{g}}_X)$-module 
 is identified with 
 an ${\cal O}_X$-module ${\cal M}$ with a $\frak{g}$-action
 satisfying  $\xi(fm)=\xi_X(f)m+f(\xi m)$ for
 $\xi\in\frak{g}$, $f\in {\cal O}_X$, and $m\in {\cal M}$.

Let ${\cal T}_X$ be the tangent sheaf of $X$ and 
let 
$p: \wtl{\frak{g}}_X(={\cal O}_X\otimes_\bb{C} \frak{g})\to {\cal T}_X$
 be the map
 given by $f\otimes \xi\mapsto f\xi_X$.
Then the kernel ${\cal H}:={\ker}\, p$ is
 isomorphic to the $G$-equivariant locally free 
 ${\cal O}_X$-module with typical fiber $\frak{h}$.
Let ${\cal D}_X$ be the ring of differential operators on $X$.
The map $p$ extends to
 $p:U(\wtl{\frak{g}}_X)\to {\cal D}_X$ 
 and descends to an isomorphism of algebras
\begin{align}
\label{ugdiso}
U(\wtl{\frak{g}}_X)/U(\wtl{\frak{g}}_X){\cal H}
 \xrightarrow{\sim} {\cal D}_X.
\end{align}

We will work in the following setting.

\begin{set}
\label{setting}
{\rm
Let $i:K\to G$ be a homomorphism of complex linear algebraic groups
 with finite kernel.
Let $H$ be a closed algebraic subgroup of $G$.
Put $M:=i^{-1}(H)$, which is an algebraic subgroup of $K$, 
 and write $X:=G/H$ and $Y:=K/M$
 for the quotient varieties. 
The map $i:K\to G$ induces an injective
 morphism between the quotient varieties $i:Y\to X$ 
 and an injective homomorphism between Lie algebras
 $di:\frak{k}\to \frak{g}$.
We identify $\frak{k}$ with its image $di(\frak{k})$
 and regard $\frak{k}$ as a subalgebra of $\frak{g}$.
}
\end{set}

In particular, $(\frak{g},K)$ and $(\frak{h},M)$ become pairs in
 the sense of Definition~\ref{de:pair}, where $\frak{h}$ is
 the Lie algebra of $H$.

Let $e\in K$ be the identity element and
 let $o:=eM \in Y$ be the base point of $Y$.
Write
\begin{align*}
{\cal I}_Y&:=\{f\in {\cal O}_X : f(y)=0 {\rm \ for\ } y\in Y\},\\ 
{\cal I}_o&:=\{f\in {\cal O}_X : f(o)=0\},
\end{align*}
 so ${\cal I}_Y$ is the defining ideal of the closure $\ovl{Y}$ of $Y$.
It follows that $i^{-1}{\cal O}_X/{\cal I}_Y \simeq {\cal O}_Y$.
Here $i^{-1}$ denotes the inverse image functor for the sheaves of
 abelian groups.
For an $i^{-1}{\cal O}_X$-module ${\cal M}$,
 the support of the sheaf ${\cal M}/(i^{-1}{\cal I}_o){\cal M}$
 is contained in $\{o\}$ so it is regarded as a vector space.

Let $Y_p$  be the scheme $(Y, i^{-1}{\cal O}_X/({\cal I}_Y)^p)$ for $p\geq 1$.
If locally we have $X=\Spec A$, $Y=\Spec I$, and $Y$ is closed in $X$,
 then $Y_p=\Spec (A/I^p)$.
The scheme $Y_1$ is identified with the algebraic variety $Y$.
If ${\cal M}$ is an $i^{-1}{\cal O}_X$-module,
 then the sheaf ${\cal M}/(i^{-1}{\cal I}_Y)^p{\cal M}$
 can be viewed as an ${\cal O}_{Y_p}$-module.

The inverse image $i^{-1}U(\wtl{\frak{g}}_X)$ of $U(\wtl{\frak{g}}_X)$
 is a sheaf of algebras on $Y$ and an $i^{-1}{\cal O}_X$-bimodule.
We will call $i^{-1}U(\wtl{\frak{g}}_X)$-modules simply
 $i^{-1}\wtl{\frak{g}}_X$-modules.
The $K$-action on $i^{-1}\wtl{\frak{g}}_X$ is given by
 $f\otimes \xi \mapsto (k\cdot f) \otimes {\rm Ad}(i(k))(\xi)$
 for $f\in i^{-1}{\cal O}_X$, $\xi\in \frak{g}$, $k \in K$.
Suppose that ${\cal M}$ is an $i^{-1}\wtl{\frak{g}}_X$-module 
and let $i^{-1}\wtl{\frak{g}}_X\otimes {\cal M}\to {\cal M}$ be the action map.
Then the inclusion $\frak{g}\cdot ({\cal I}_Y)^p\subset ({\cal I}_Y)^{p-1}$
 induces a map 
 $i^{-1}\wtl{\frak{g}}_X\otimes {\cal M}/(i^{-1}{\cal I}_Y)^p{\cal M}
 \to {\cal M}/(i^{-1}{\cal I}_Y)^{p-1}{\cal M}$. 
The $K$-actions on $X$ and $Y$ induce a $K$-action on $Y_p$.
Since $Y$ is $K$-stable in $X$, we have
 $\frak{k}\cdot ({\cal I}_Y)^p\subset ({\cal I}_Y)^p$.
Therefore, we can define a $\frak{k}$-action on 
 ${\cal M}/(i^{-1}{\cal I}_Y)^p{\cal M}$.
Similarly, we have $\frak{h}\cdot {\cal I}_o\subset {\cal I}_o$ and 
 we can equip ${\cal M}/(i^{-1}{\cal I}_o){\cal M}$
 with a $\frak{h}$-module structure.

\begin{de}
\label{de:assmod}
Let $V$ be a $(\frak{h},M)$-module.
We say an $i^{-1}\wtl{\frak{g}}_X$-module
 ${\cal V}$ is \emph{associated with $V$} if
 ${\cal V}/(i^{-1}{\cal I}_Y)^p{\cal V}$
 is a $K$-equivariant quasi-coherent ${\cal O}_{Y_p}$-module for all $p\geq 1$
 and the following five assumptions hold.
\begin{enumerate}
\item
The canonical map
\[{\cal V}/(i^{-1}{\cal I}_Y)^p{\cal V}
\to {\cal V}/(i^{-1}{\cal I}_Y)^{p-1}{\cal V}\]
 commutes with $K$-actions for $p\geq 2$.
\item
${\cal V}/(i^{-1}{\cal I}_Y)^p{\cal V}$
 is a flat ${\cal O}_{Y_p}$-module for $p\geq 1$.
\item
The action map
$i^{-1}\wtl{\frak{g}}_X\otimes {\cal V}/(i^{-1}{\cal I}_Y)^p{\cal V}
\to {\cal V}/(i^{-1}{\cal I}_Y)^{p-1}{\cal V}$
commutes with $K$-actions for $p\geq 2$.
Here $K$ acts on $i^{-1}\wtl{\frak{g}}_X\otimes {\cal V}/(i^{-1}{\cal I}_Y)^p{\cal V}$ by diagonal.
\item
The $\frak{k}$-action on
 ${\cal V}/(i^{-1}{\cal I}_Y)^p{\cal V}$
 induced from the $\frak{g}$-action on ${\cal V}$
 coincides with the differential of the $K$-action on
 ${\cal V}/(i^{-1}{\cal I}_Y)^p{\cal V}$ for $p\geq 1$.
\item
There is an isomorphism 
$\iota:
{\cal V}/(i^{-1}{\cal I}_o){\cal V}
 \xrightarrow{\sim} V$
which commutes with $\frak{h}$-actions and $M$-actions.
\end{enumerate}
\end{de}

\begin{rem}
The $\frak{g}$-action and the $K$-action on ${\cal V}$ induce
 a $\frak{h}$-action and an $M$-action on
 ${\cal V}/(i^{-1}{\cal I}_o){\cal V}$.
The conditions (3) and (4) imply
 that ${\cal V}/(i^{-1}{\cal I}_o){\cal V}$ becomes
 a $(\frak{h},M)$-module.
\end{rem}

\begin{ex}
\label{vectbdle}
Suppose that $V$ is an $H$-module and define
 the $G$-equivariant quasi-coherent ${\cal O}_X$-module ${\cal V}_X$ as
 in Definition \ref{eqvb}.
The $G$-action on ${\cal V}_X$ induces a $\frak{g}$-action
 and a $K$-action on ${\cal V}_X$.
Then by regarding $V$ as a $(\frak{h},M)$-module, 
 $i^{-1}{\cal V}_X$ is associated with $V$.
\end{ex}

We will construct an $i^{-1}\wtl{\frak{g}}_X$-module associated with
 an arbitrary $(\frak{h},M)$-module in Section \ref{sec:const}.

\begin{ex}
\label{tensor}
Let ${\cal V}$ and ${\cal W}$ be
 $i^{-1}\wtl{\frak{g}}_X$-modules associated with
 $(\frak{h},M)$-modules $V$ and $W$, respectively.
Then the tensor product
 ${\cal V}\otimes_{i^{-1}{\cal O}_X} {\cal W}$ is associated with
 the $(\frak{h},M)$-module $V\otimes W$.
\end{ex}

We can define the pull-back of $i^{-1}\wtl{\frak{g}}_X$-modules
 associated with
 $V$ in the following way.
Let $K'$, $G'$, $H'$ be another triple of
 algebraic groups satisfying the assumptions in
 Setting \ref{setting}.
In particular, the map $i':K'\to G'$ induces a morphism of
 the quotient varieties $i':K'/M'\to G'/H'$,
 where $M':=(i')^{-1}(H')$.
Suppose that $\varphi_K: K'\to K$ and $\varphi:G'\to G$ are homomorphisms
 such that the diagram
\begin{align*}
\xymatrix{
K' \ar[r]^{i'} \ar[d]_{\varphi_K}& G' \ar[d]^{\varphi}\\
K \ar[r]^{i} & G
}
\end{align*}
commutes and that 
$\varphi(H')\subset H$.
Then $\varphi_K(M')\subset M$.
The maps $\varphi$, $\varphi_K$ induce
 morphisms $\varphi : X':=G'/H' \to X$, $\varphi_K: Y':=K'/M'\to Y$
 and $\varphi_p: Y'_p:=(Y', (i')^{-1}{\cal O}_{X'}/({\cal I}_{Y'})^p)
 \to Y_p$.
We get the commutative diagram:
\begin{align*}
\xymatrix{
Y' \ar[r]^{i'} \ar[d]_{\varphi_K}& X' \ar[d]^{\varphi}\\
Y \ar[r]^{i} & X.
}
\end{align*}
Suppose that ${\cal V}$ is an $i^{-1}\wtl{\frak{g}}_X$-module associated with
 a $(\frak{h},M)$-module $V$.
Let
 ${\cal V}':=
 (i')^{-1}{\cal O}_{X'}\otimes_{(\varphi\circ i')^{-1}{\cal O}_X}
 \varphi_K^{-1}{\cal V}$.
We define a $\frak{g}'$-action on ${\cal V}'$ by
 $\xi(f\otimes v)= \xi_{X'} (f) \otimes v + f \otimes \varphi(\xi) v$
 for $\xi\in \frak{g}'$, $f\in (i')^{-1}{\cal O}_{X'}$, and
 $v\in \varphi_K^{-1}{\cal V}$ so that ${\cal V}'$ becomes
 an $(i')^{-1}\wtl{\frak{g}'}_{X'}$-module.
Since 
\[{\cal V}'/((i')^{-1}{\cal I}_{Y'})^p{\cal V}'\simeq
  (i')^{-1}{\cal O}_{X'}/({\cal I}_{Y'})^p
 \otimes_{(\varphi\circ i')^{-1}{\cal O}_X}
 \varphi_K^{-1}{\cal V}
 \simeq \varphi_p^*({\cal V}/(i^{-1}{\cal I}_Y)^p{\cal V}),
\]
 the sheaf ${\cal V}'/((i')^{-1}{\cal I}_{Y'})^p{\cal V}'$
 is a $K'$-equivariant quasi-coherent ${\cal O}_{Y'_p}$-module.
We can easily show the following proposition
 by checking the
 five assumptions in Definition~\ref{de:assmod}.

\begin{prop}
\label{pullback}
Let $V$ be a $(\frak{h},M)$-module and ${\cal V}$ an
 $i^{-1}\wtl{\frak{g}}_X$-module associated with $V$.
Then the $(i')^{-1}\wtl{\frak{g}'}_{X'}$-module
 $(i')^{-1}{\cal O}_{X'}\otimes_{(\varphi\circ i')^{-1}{\cal O}_X}
 \varphi_K^{-1}{\cal V}$
 is associated with
 the $(\frak{h}',M')$-module
 ${\rm For}_{\frak{h},M}^{\frak{h}', M'}(V)$.
\end{prop}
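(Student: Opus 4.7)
The plan is to verify directly the five conditions of Definition~\ref{de:assmod} for the $(i')^{-1}\wtl{\frak{g}'}_{X'}$-module
\[
{\cal V}':=(i')^{-1}{\cal O}_{X'}\otimes_{(\varphi\circ i')^{-1}{\cal O}_X}\varphi_K^{-1}{\cal V}
\]
with respect to the $(\frak{h}',M')$-module $\mathrm{For}_{\frak{h},M}^{\frak{h}',M'}(V)$, using as the key input the displayed isomorphism
\[
{\cal V}'/((i')^{-1}{\cal I}_{Y'})^p{\cal V}'\simeq\varphi_p^{*}\bigl({\cal V}/(i^{-1}{\cal I}_Y)^p{\cal V}\bigr)
\]
already supplied in the setup, which allows one to transport each of the five hypotheses on ${\cal V}$ to ${\cal V}'$ by functoriality.

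First I would handle the equivariance conditions (1) and (3). Since $\varphi_K:Y'\to Y$ is $K'$-equivariant (where $K'$ acts on $Y$ through $\varphi_K:K'\to K$), each induced morphism $\varphi_p:Y'_p\to Y_p$ is also $K'$-equivariant, and therefore the pullback $\varphi_p^{*}({\cal V}/(i^{-1}{\cal I}_Y)^p{\cal V})$ carries a canonical $K'$-equivariant quasi-coherent ${\cal O}_{Y'_p}$-module structure inherited from that of ${\cal V}/(i^{-1}{\cal I}_Y)^p{\cal V}$. Condition (1) for ${\cal V}'$ is then the pullback under $\varphi_p$ of condition (1) for ${\cal V}$; condition (3) follows analogously once one observes that the canonical map $\wtl{\frak{g}'}_{X'}\to\varphi^{*}\wtl{\frak{g}}_X$ sending $\xi\mapsto 1\otimes\varphi(\xi)$ is $K'$-equivariant, so the action map on ${\cal V}'$ is obtained by pullback of the corresponding $K$-equivariant action map on ${\cal V}$ and is therefore $K'$-equivariant.

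Next, condition (2) is immediate: flatness of ${\cal V}/(i^{-1}{\cal I}_Y)^p{\cal V}$ over ${\cal O}_{Y_p}$ is preserved under the base change $\varphi_p^{*}$. Condition (4) requires the most care, and is where I expect the main obstacle. One must check that the $\frak{k}'$-action on ${\cal V}'/((i')^{-1}{\cal I}_{Y'})^p{\cal V}'$ induced from the $\frak{g}'$-action defined by $\xi(f\otimes v)=\xi_{X'}(f)\otimes v+f\otimes\varphi(\xi)v$ agrees with the differential of the $K'$-action coming from the pullback. For $\xi\in\frak{k}'$ one has $\varphi(\xi)\in\frak{k}$, so the term $f\otimes\varphi(\xi)v$ contributes the differential of the $K$-action on ${\cal V}$ pulled back along $\varphi_K$, while the Leibniz term $\xi_{X'}(f)\otimes v$ contributes the differential of the $K'$-action on the structure sheaf factor; adding these gives exactly the differential of the tensor-product $K'$-action, so condition (4) for ${\cal V}'$ follows from condition (4) for ${\cal V}$.

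Finally, for condition (5), since $\varphi_K$ is a group homomorphism sending the base point $o'\in Y'$ to $o\in Y$, the ideal $(i')^{-1}{\cal I}_{o'}$ contains the pullback of $i^{-1}{\cal I}_o$, and one computes
\[
{\cal V}'/((i')^{-1}{\cal I}_{o'}){\cal V}'\simeq\bb{C}\otimes_{\bb{C}}\bigl({\cal V}/(i^{-1}{\cal I}_o){\cal V}\bigr)\xrightarrow{\ \iota\ }V,
\]
and this isomorphism is $(\frak{h}',M')$-equivariant precisely because the $\frak{h}'$-action on the fiber factors through $\varphi:\frak{h}'\to\frak{h}$ and the $M'$-action factors through $\varphi_K|_{M'}:M'\to M$, which is the definition of the forgetful functor $\mathrm{For}_{\frak{h},M}^{\frak{h}',M'}$. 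The entire argument is therefore a diagram chase whose only nontrivial ingredient is the agreement of two $\frak{k}'$-actions in condition (4); the rest is formal functoriality.
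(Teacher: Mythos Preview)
Your proposal is correct and takes precisely the approach the paper indicates: the paper offers no detailed proof, stating only that the proposition follows ``by checking the five assumptions in Definition~\ref{de:assmod}'', and you have carried out exactly that verification, using the displayed isomorphism ${\cal V}'/((i')^{-1}{\cal I}_{Y'})^p{\cal V}'\simeq\varphi_p^{*}({\cal V}/(i^{-1}{\cal I}_Y)^p{\cal V})$ supplied in the setup. Your treatment of condition~(4), identifying the two Leibniz contributions with the differential of the tensor $K'$-action, is the only point requiring any thought, and your argument there is sound.
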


\section{Localization of cohomological induction}
\label{sec:loc}

We retain Setting \ref{setting}.
In this section, we assume moreover that $K$ is reductive.
Let $M=L\ltimes U$ be a Levi decomposition of $M$,
 where $L$ is a maximal reductive subgroup of $M$ and $U$
 is the unipotent radical of $M$.
The corresponding decomposition of the Lie algebra is
 $\frak{m}=\frak{l}\oplus\frak{u}$.

Let $V$ be a $(\frak{h}, M)$-module. 
We can see $V$ as a $(\frak{h}, L)$-module by restriction
 and then define the cohomologically induced module
 $(P_{\frak{h},L}^{\frak{g},K})_j (V)$ as in Section~\ref{sec:coh}.

In order to state the main theorem,
 we need a shift of modules by a character
 (or an invertible sheaf) that we will define in the following.
Write $\bigwedge^{\rm top}(\frak{k}/\frak{l})$ for 
 the top exterior product of $\frak{k}/\frak{l}$
 and view it as
 a one-dimensional $L$-module by the adjoint action.
Since $K$ and $L$ are reductive, the identity component of $L$
 acts trivially on $\bigwedge^{\rm top}(\frak{k}/\frak{l})$.
We extend the $L$-action on $\bigwedge^{\rm top}(\frak{k}/\frak{l})$
 to an $M$-action by letting $U$ act trivially.
Define ${\cal L}$ as the $K$-equivariant locally free
 ${\cal O}_Y$-module on $Y:=K/M$ whose
 typical fiber is isomorphic
 to the $M$-module $\bigwedge^{\rm top}(\frak{k}/\frak{l})$.
The $K$-action on ${\cal L}$ differentiates to a $\frak{k}$-action.
Then ${\cal L}$ becomes a $U(\wtl{\frak{k}}_Y)$-module
 and the kernel of the map
 $\wtl{\frak{k}}_Y \to {\cal T}_Y$ acts by zero
 because the identity component of $M$ acts trivially on
 $\bigwedge^{\rm top}(\frak{k}/\frak{l})$.
Therefore, ${\cal L}$ has a structure of left ${\cal D}_Y$-module via
 the isomorphism \eqref{ugdiso} for $Y$.

Let ${\cal M}$ be a left ${\cal D}_Y$-module.
Recall that the direct image of ${\cal M}$ by $i$ in the category of
 left ${\cal D}$-modules is defined as
\begin{align}
\label{eq:defDpush}
i_+{\cal M}:=
 i_*(({\cal M}\otimes_{{\cal O}_Y} \Omega_Y)\otimes_{{\cal D}_Y}
 {i^*{\cal D}_X})\otimes_{{\cal O}_X} \Omega_X\spcheck,
\end{align}
where $i_*$ is the direct image functor for sheaves
 of abelian groups, 
 $\Omega_Y$ is the canonical sheaf of $Y$, and
 $\Omega_X\spcheck$ is the dual of the canonical sheaf of $X$.
Via the map $p:U(\wtl{\frak{g}}_X)\to {\cal D}_X$, 
 we can see $i_+{\cal M}$ as a $\wtl{\frak{g}}_X$-module.
The inverse image $i^{-1}i_+{\cal M}$ as a sheaf of abelian groups is
\[i^{-1}i_+{\cal M}=
 ({\cal M}\otimes_{{\cal O}_Y} \Omega_Y)\otimes_{{\cal D}_Y}
 {i^*{\cal D}_X}\otimes_{i^{-1}{\cal O}_X} i^{-1}\Omega_X\spcheck,
\]
which has an $i^{-1}\wtl{\frak{g}}_X$-module structure.
We note that the functor $i^{-1}i_+$ is exact.

Define subsheaves of ${\cal D}_X$ by
\[F_p{\cal D}_X
:=\{D\in {\cal D}_X: D({\cal I}_Y)^{p+1}\subset {\cal I}_Y\}
\]
for $p\geq 0$.
They are ${\cal O}_X$-bi-submodules of ${\cal D}_X$ 
 and form a filtration of ${\cal D}_X$.
It induces a filtration of $i^{-1}i_+{\cal L}$:
\[
F_pi^{-1}i_+{\cal L}
 := ({\cal L}\otimes_{{\cal O}_Y} \Omega_Y)\otimes_{{\cal D}_Y}
 {i^* F_p{\cal D}_X}\otimes_{i^{-1}{\cal O}_X} i^{-1}\Omega_X\spcheck.
\]
It follows from the definition of $F_p{\cal D}_X$ that 
 $F_pi^{-1}i_+{\cal L}$ is annihilated by $(i^{-1}{\cal I}_Y)^{p+1}$
 and hence is regarded as a quasi-coherent ${\cal O}_{Y_{p+1}}$-module.

Here is the main theorem of this article:

\begin{thm}
\label{loccoh}
In Setting~\ref{setting}, we assume that $K$ is reductive.
Let $M=L\ltimes U$ be a Levi decomposition.
Suppose that $V$ is a $(\frak{h}, M)$-module and 
 that ${\cal V}$ is an $i^{-1}\wtl{\frak{g}}_X$-module
 associated with $V$ (Definition~\ref{de:assmod}).
Then we have an isomorphism of $(\frak{g},K)$-modules
\begin{align*}
{\rm H}^s(Y, i^{-1}i_+{\cal L}
 \otimes_{i^{-1}{\cal O}_X} {\cal V}) \simeq
 (P_{\frak{h},L}^{\frak{g},K})_{u-s}
 \Bigl(V\otimes \bigwedge^{\rm top}(\frak{g}/\frak{h})\Bigr)
\end{align*}
for $s\in \bb{N}$ and $u= \dim U$.
(See the remark below for the definition of the $(\frak{g},K)$-action
 on the left side.)
\end{thm}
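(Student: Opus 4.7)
The plan is to adapt the Hecht--Mili\v{c}i\'c--Schmid--Wolf argument from \cite{HMSW} to the present generality: exhibit both sides as the cohomology of a common explicit complex, and then identify the induced $(\frak{g},K)$-module structures. The proof has a geometric half (processing the sheaf cohomology of $i^{-1}i_+{\cal L}\otimes{\cal V}$ on $Y=K/M$) and an algebraic half (processing $(P_{\frak{h},L}^{\frak{g},K})_*$ via a standard resolution), joined by a matching step.

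First, I would exploit the filtration $F_p$ on $i^{-1}i_+{\cal L}$. Since $F_p i^{-1}i_+{\cal L}$ is annihilated by $(i^{-1}{\cal I}_Y)^{p+1}$ and ${\cal V}/(i^{-1}{\cal I}_Y)^{p+1}{\cal V}$ is ${\cal O}_{Y_{p+1}}$-flat by Definition~\ref{de:assmod}(2), the tensor product
\begin{equation*}
F_p\bigl(i^{-1}i_+{\cal L}\otimes_{i^{-1}{\cal O}_X}{\cal V}\bigr)
\end{equation*}
depends only on ${\cal V}/(i^{-1}{\cal I}_Y)^{p+1}{\cal V}$, and the graded pieces $\operatorname{gr}^F_p$ are $K$-equivariant quasi-coherent ${\cal O}_Y$-modules whose typical fibers can be written in terms of $\mathrm{Sym}^p(\frak{g}/\frak{k})^*$, $\bigwedge^{\rm top}(\frak{k}/\frak{m})^*$ (from $\Omega_Y$), $\bigwedge^{\rm top}(\frak{k}/\frak{l})$ (from ${\cal L}$), $\bigwedge^{\rm top}(\frak{g}/\frak{h})$ (from $\Omega_X\spcheck$), and $V$. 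The $K$-equivariance statements in Definition~\ref{de:assmod}(1),(3),(4) ensure this passes to a $K$-equivariant filtration of the whole tensor product in a way compatible with the $i^{-1}\widetilde{\frak{g}}_X$-action modulo filtration.

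Second, I would compute the sheaf cohomology on $Y=K/M$ using the standard description of $R\Gamma(Y,-)$ of $K$-equivariant quasi-coherent sheaves in terms of $R(K)\otimes_{R(M)}(-)$ on fibers. Combining this with the asymptotic description of $i^{-1}i_+{\cal L}\otimes{\cal V}$ above (by passing to the inverse limit over $p$), one obtains a double complex whose $E_1$-term has the form
\begin{equation*}
R(\frak{g},K)\otimes_{R(\frak{m},L)} \bigl(\textstyle\bigwedge^\bullet\frak{u}\bigr) \otimes V\otimes \bigwedge^{\rm top}(\frak{g}/\frak{h}),
\end{equation*}
with differentials induced by the $\frak{u}$-action on the Koszul complex and by the $\widetilde{\frak{g}}_X$-action. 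On the algebraic side, the standard projective resolution of $V\otimes\bigwedge^{\rm top}(\frak{g}/\frak{h})$ as an $(\frak{h},L)$-module, routed through $M=L\ltimes U$ via the Koszul resolution $U(\frak{m})\otimes\bigwedge^\bullet\frak{u}\to U(\frak{l})$, produces, after applying $R(\frak{g},K)\otimes_{R(\frak{h},L)}(-)$, exactly the same complex. The $u$-shift in the index $(u-s)$ comes from the top Koszul degree $\bigwedge^u\frak{u}$, and agrees with the canonical-sheaf shift built into the definition \eqref{eq:defDpush} of $i_+$.

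The main obstacle will be the chain-level identification of these two complexes equivariantly for the full $R(\frak{g},K)$-action: the geometric complex carries its $(\frak{g},K)$-action through the $i^{-1}\widetilde{\frak{g}}_X$-structure of ${\cal V}$ (and hence through the associated ${\cal O}$-module data of Definition~\ref{de:assmod}), while the algebraic complex carries a manifest $R(\frak{g},K)$-action. One must carefully verify that (i) the higher sheaf cohomology of each graded piece vanishes outside the expected degree so the filtration spectral sequence collapses, (ii) the isomorphism between the two $E_1$-pages is compatible with differentials, and (iii) the twist by $\bigwedge^{\rm top}(\frak{g}/\frak{h})$ on the algebraic side correctly absorbs the combined contribution of $\Omega_X\spcheck$, $\Omega_Y$, and ${\cal L}$ on the geometric side. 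Proposition~\ref{pullback} suggests a useful reduction: pulling back along $\varphi:K\hookrightarrow G$ (so that $G'=K$) reduces the general case to a universal situation where one can perform the matching concretely, and the general case follows by naturality.
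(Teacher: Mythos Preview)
Your overall strategy---exhibit both sides as the cohomology of a common explicit complex and then match the $(\frak{g},K)$-actions---is the right one, and it is what the paper does.  But the concrete implementation you sketch has a real gap.

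The paper does \emph{not} run a spectral sequence for the filtration $F_p$ on $Y=K/M$.  Instead its key geometric move is to introduce the auxiliary homogeneous spaces $\wtl{X}=G/L$ and $\wtl{Y}=K/L$ and the projections $\pi:\wtl{X}\to X$, $\pi_K:\wtl{Y}\to Y$.  Because $L$ is reductive, Matsushima's criterion makes $\wtl{X}$ and $\wtl{Y}$ affine; this is what replaces your wish for a ``standard description of $R\Gamma(Y,-)$ in terms of $R(K)\otimes_{R(M)}(-)$,'' which is not available since $M$ is not reductive and $R(M)$ is not defined in the framework of Section~\ref{sec:coh}.  The shift by $u$ comes cleanly from $(\pi_K)_+\Omega_{\wtl{Y}}\spcheck\simeq{\cal L}[u]$, and the complex that appears is the relative de Rham resolution $\bigwedge^\bullet{\cal T}_{\wtl{X}/X}$, whose typical fiber is $\bigwedge^\bullet(\frak{h}/\frak{l})$.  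After taking global sections on the affine $\wtl{Y}$ (Lemma~\ref{affiso}) this yields
\[
R(\frak{g},K)\otimes_{R(L)}\Bigl(\bigwedge^{\bullet}(\frak{h}/\frak{l})\otimes V\otimes\bigwedge^{\rm top}(\frak{g}/\frak{h})\Bigr),
\]
which matches the standard Koszul resolution of the $(\frak{h},L)$-module $V\otimes\bigwedge^{\rm top}(\frak{g}/\frak{h})$ term by term; the rest of the proof is checking that the differentials agree.

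In your outline two different directions get conflated.  The filtration $F_p$ is by order of differential operators transverse to $Y$ and its graded pieces involve symmetric powers of the conormal bundle of $Y$ in $X$; it has nothing to do with $\frak{u}$.  The $\bigwedge^\bullet\frak{u}$-complex you write would arise only if one tried to compute $R\Gamma(K/M,-)$ of each graded piece via relative Lie algebra cohomology for $\frak{u}$, which is a second, independent resolution.  Your asserted $E_1$-page $R(\frak{g},K)\otimes_{R(\frak{m},L)}\bigwedge^\bullet\frak{u}\otimes V\otimes\bigwedge^{\rm top}(\frak{g}/\frak{h})$ therefore does not follow from the $F_p$-filtration alone, and in general $\frak{u}\subsetneq\frak{h}/\frak{l}$, so it is not the correct complex.  (In the paper the filtration $F_p$ is used only as a technical device---to define the $K$-action in Remark~\ref{rem:gkact} and to prove that the projection-formula map $\psi$ of \eqref{projf} is an isomorphism---not as the engine of the computation.)  Finally, your proposed reduction via Proposition~\ref{pullback} with $G'=K$ would collapse $X'$ to $Y$ and lose the transverse $\frak{g}/\frak{k}$-direction; the paper does not argue this way.
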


\begin{rem}
\label{rem:gkact}
Since $i^{-1}i_+{\cal L}$ and ${\cal V}$ have
 $i^{-1}\wtl{\frak{g}}_X$-module structures,
 the tensor product
 $i^{-1}i_+{\cal L} \otimes_{i^{-1}{\cal O}_X} {\cal V}$
 becomes an $i^{-1}\wtl{\frak{g}}_X$-module.
This gives a $\frak{g}$-action on the cohomology group
 ${\rm H}^s(Y, i^{-1}i_+{\cal L}
 \otimes_{i^{-1}{\cal O}_X} {\cal V})$.
In order to define a $K$-action, 
 we use the filtration $F_pi^{-1}i_+{\cal L}$ defined above.
By definition,
 $(i^{-1}{\cal I}_Y)^{p+1}$ annihilates $F_pi^{-1}i_+{\cal L}$
 and hence
\begin{align}
\label{filiso}
F_pi^{-1}i_+{\cal L}\otimes_{i^{-1}{\cal O}_X}{\cal V}
\simeq  F_pi^{-1}i_+{\cal L}
 \otimes_{{\cal O}_{Y_{q}}}{\cal V}/(i^{-1}{\cal I}_Y)^{q}{\cal V}
\end{align}
for $p<q$.
Since ${\cal V}/(i^{-1}{\cal I}_Y)^{q}{\cal V}$ is
 a flat ${\cal O}_{Y_q}$-module by Definition \ref{de:assmod} (2), 
 the map
\[F_{p-1}i^{-1}i_+{\cal L}\otimes_{i^{-1}{\cal O}_X}{\cal V}
 \to F_{p}i^{-1}i_+{\cal L}\otimes_{i^{-1}{\cal O}_X}{\cal V}\]
is injective.
We let $K$ act on the right side of \eqref{filiso} by diagonal.
Then it gives a $K$-action on  
 ${\rm H}^s(Y, F_pi^{-1}i_+{\cal L}\otimes_{i^{-1}{\cal O}_X}{\cal V})$.
Using the isomorphisms
\begin{align*}
 {\rm H}^s(Y, i^{-1}i_+{\cal L}\otimes_{i^{-1}{\cal O}_X}{\cal V})
&\simeq 
 {\rm H}^s \bigl(Y,
 (\varinjlim_p F_pi^{-1}i_+{\cal L})\otimes_{i^{-1}{\cal O}_X}{\cal V}\bigr)\\
&\simeq 
 {\rm H}^s \bigl(Y,
 \varinjlim_p (F_pi^{-1}i_+{\cal L}\otimes_{i^{-1}{\cal O}_X}{\cal V})\bigr)\\
&\simeq 
  \varinjlim_p {\rm H}^s(Y,
 F_pi^{-1}i_+{\cal L}\otimes_{i^{-1}{\cal O}_X}{\cal V}),
\end{align*}
 we define a $K$-action on 
 ${\rm H}^s(Y, i^{-1}i_+{\cal L}\otimes_{i^{-1}{\cal O}_X}{\cal V})$.
With these actions, 
 ${\rm H}^s(Y, i^{-1}i_+{\cal L}\otimes_{i^{-1}{\cal O}_X}{\cal V})$
 becomes a $(\frak{g},K)$-module because of
 Definition \ref{de:assmod} (3) and (4).
\end{rem}

\begin{proof}
Let $\wtl{X}:=G/L$ and $\wtl{Y}:=K/L$ be the quotient varieties.
We have the commutative diagram: 
\begin{align*}
\xymatrix{
\wtl{Y} \ar[r]^{\tilde{\imath}} \ar[d]_{\pi_K} 
& \wtl{X} \ar[d]^{\pi}\\
Y   \ar[r]^{i}   & X
}
\end{align*}
where the maps are defined canonically.

The direct image functor $i_+$ defined as \eqref{eq:defDpush}
 induces the direct image functor between the bounded derived categories
 of left ${\cal D}$-modules, which we denote by 
 $i_+ : {\bf D}^{\rm b}({\cal D}_Y) \to {\bf D}^{\rm b}({\cal D}_X)$.
Similarly for $\tilde{\imath}_+$, $\pi_+$, and $(\pi_K)_+$.
We have
 $\pi_+ \circ \tilde{\imath}_+\simeq i_+\circ (\pi_K)_+$.
Since $\pi_K$ is a smooth morphism and the fiber is isomorphic to
 the affine space $\bb{C}^u$, 
 it follows that $(\pi_K)_+ {\Omega}_{\wtl{Y}}\spcheck\simeq {\cal L}[u]$
 (see \cite{HMSW}).
Here ${\cal L}[u] \in {\bf D}^{\rm b}({\cal D}_Y)$
 is the complex $(\cdots \to 0\to {\cal L} \to 0 \to \cdots)$, 
 concentrated in degree $-u$.
Therefore,
 $i_+(\pi_K)_+{\Omega}_{\wtl{Y}}\spcheck\simeq i_+{\cal L}[u]$
 in ${\bf D}^{\rm b}({\cal D}_X)$.

Since $L$ is reductive,
 the varieties $\wtl{X}$ and $\wtl{Y}$ are affine
 by Matsushima's criterion.
Hence the functor $\tilde{\imath}_+$ is exact for quasi-coherent ${\cal D}$-modules and $\pi_*$ is exact for quasi-coherent ${\cal O}$-modules.

Denote by ${\cal T}_{\wtl{X}/X}$ the sheaf of local vector fields
 on $\wtl{X}$ tangent to the fiber of $\pi$, and denote by $\Omega_{\wtl{X}/X}$
 the top exterior product of its dual ${\cal T}_{\wtl{X}/X}\spcheck$.
We note that there is a natural isomorphism
 $\Omega_{\wtl{X}/X}\simeq 
 \Omega_{\wtl{X}}\otimes_{{\cal O}_{\wtl{X}}}\pi^*\Omega_X\spcheck$.
Recall that for ${\cal M}\in {\bf D}^{\rm b}({\cal D}_{\wtl{X}})$ 
 the direct image $\pi_+{\cal M}$ is defined as
\[
\pi_+{\cal M}
=\pi_*\bigl(
 ({\cal M}\otimes_{{\cal O}_{\wtl{X}}} \Omega_{\wtl{X}})
 \otimes^{\bb{L}}_{{\cal D}_{\wtl{X}}} \pi^*{\cal D}_X \bigr)
 \otimes_{{\cal O}_X} \Omega_X\spcheck.
\]
The left ${\cal D}_{\wtl{X}}$-module
 $\pi^*{\cal D}_X$ has the resolution (see \cite[Appendix A.3.3]{HMSW}):
\begin{align}
\label{resold}
{\cal D}_{\wtl{X}} \otimes_{{\cal O}_{\wtl{X}}}
 \bigwedge^\bullet {\cal T}_{\wtl{X}/X}
 \to \pi^*{\cal D}_X,
\end{align}
 where the boundary map $\partial$ on 
${\cal D}_{\wtl{X}} \otimes_{{\cal O}_{\wtl{X}}}
 \bigwedge^\bullet {\cal T}_{\wtl{X}/X}$
 is given as
\begin{align*}
 &D \otimes \wtl{\xi_1} \wedge \cdots\wedge \wtl{\xi_d} \\
&\mapsto
\sum_{i=1}^d (-1)^{i+1} D \wtl{\xi_i}
 \otimes \wtl{\xi_1} \wedge \cdots \wedge \widehat{\wtl{\xi_i}}
 \wedge \cdots \wedge \wtl{\xi_d}\\
&+\sum_{1\leq i<j\leq d} (-1)^{i+j} D \otimes
 [\wtl{\xi_i}, \wtl{\xi_j}] \wedge \wtl{\xi_1} \wedge
 \cdots \wedge \widehat{\wtl{\xi_i}} \wedge \cdots \wedge
 \widehat{\wtl{\xi_j}} \wedge \cdots \wedge \wtl{\xi_d}.
\end{align*}
The right $\pi^{-1}{\cal D}_X$-module structure
 is not canonically defined on the complex, but the $\frak{g}$-action
 can be described as
\begin{align*}
 \xi (D \otimes \wtl{\xi_1} \wedge \cdots\wedge \wtl{\xi_d})
=-D \xi_{\wtl{X}} \otimes \wtl{\xi_1} \wedge \cdots\wedge \wtl{\xi_d}
  + D \otimes \xi (\wtl{\xi_1} \wedge \cdots\wedge \wtl{\xi_d})
\end{align*}
for $\xi\in\frak{g}$.
Here we use the $\frak{g}$-action
 on $\bigwedge{\cal T}_{\wtl{X}/X}$ induced from
 the $G$-equivariant structure.

By using the resolution \eqref{resold}, 
 the direct image
 $\pi_+ \tilde{\imath}_+ {\Omega}_{\wtl{Y}}\spcheck$
 is given as the complex
\begin{align*}
\pi_*\Bigl(\tilde{\imath}_+ {\Omega}_{\wtl{Y}}\spcheck
 \otimes_{{\cal O}_{\wtl{X}}} \Omega_{\wtl{X}}
 \otimes_{{\cal O}_{\wtl{X}}} \bigwedge^\bullet {\cal T}_{\wtl{X}/X}\Bigr)
 \otimes_{{\cal O}_X} \Omega_{X}\spcheck.
\end{align*}
As a result, we have
\begin{align*}
i_+{\cal L}[u]
\simeq\pi_*\Bigl(\tilde{\imath}_+ {\Omega}_{\wtl{Y}}\spcheck
 \otimes_{{\cal O}_{\wtl{X}}} 
 \bigwedge^\bullet {\cal T}_{\wtl{X}/X}
 \otimes_{{\cal O}_{\wtl{X}}}
 \Omega_{\wtl{X}/X}\Bigl)
\end{align*}
and hence 
\begin{align}
\label{isom1}
i^{-1}i_+{\cal L}[u]
&\simeq
i^{-1}\pi_*\Bigl(\tilde{\imath}_+ {\Omega}_{\wtl{Y}}\spcheck
 \otimes_{{\cal O}_{\wtl{X}}} 
 \bigwedge^\bullet {\cal T}_{\wtl{X}/X}
 \otimes_{{\cal O}_{\wtl{X}}}
 \Omega_{\wtl{X}/X}\Bigr)\\ \nonumber
&\simeq
i^{-1}\pi_*\tilde{\imath}_*
 \Bigl(\tilde{\imath}^{-1}\tilde{\imath}_+ {\Omega}_{\wtl{Y}}\spcheck
 \otimes_{\tilde{\imath}^{-1}{\cal O}_{\wtl{X}}} 
 \tilde{\imath}^{-1}\bigwedge^\bullet {\cal T}_{\wtl{X}/X}
 \otimes_{\tilde{\imath}^{-1}{\cal O}_{\wtl{X}}}
 \tilde{\imath}^{-1}\Omega_{\wtl{X}/X}\Bigr)\\  \nonumber
&\simeq
(\pi_K)_*\biggl(\tilde{\imath}^{-1}
 \tilde{\imath}_+ {\Omega}_{\wtl{Y}}\spcheck
 \otimes_{\tilde{\imath}^{-1}{\cal O}_{\wtl{X}}} 
 \tilde{\imath}^{-1}
 \Bigl(\bigwedge^\bullet {\cal T}_{\wtl{X}/X}
 \otimes_{{\cal O}_{\wtl{X}}}
 \Omega_{\wtl{X}/X}\Bigr)\biggr).
\end{align}

There is a natural morphism of
 complexes of $i^{-1}{\cal O}_X$-modules
\begin{align}
\label{projf}
\psi:\ 
&(\pi_K)_*\biggl(\tilde{\imath}^{-1}
 \tilde{\imath}_+ {\Omega}_{\wtl{Y}}\spcheck
 \otimes_{\tilde{\imath}^{-1}{\cal O}_{\wtl{X}}} 
 \tilde{\imath}^{-1}
 \Bigl(
 \bigwedge^\bullet {\cal T}_{\wtl{X}/X}
 \otimes_{{\cal O}_{\wtl{X}}}
 \Omega_{\wtl{X}/X}
 \Bigr)\biggr)
 \otimes_{i^{-1}{\cal O}_X} {\cal V}\\ \nonumber
\to\ 
&(\pi_K)_*\biggl(\tilde{\imath}^{-1}
 \tilde{\imath}_+ {\Omega}_{\wtl{Y}}\spcheck
 \otimes_{\tilde{\imath}^{-1}{\cal O}_{\wtl{X}}} 
 \tilde{\imath}^{-1} \Bigl(
 \bigwedge^\bullet {\cal T}_{\wtl{X}/X}
 \otimes_{{\cal O}_{\wtl{X}}}
 \Omega_{\wtl{X}/X}
 \Bigr)
 \otimes_{\pi_K^{-1}i^{-1}{\cal O}_{X}} 
 \pi_K^{-1}{\cal V} \biggr).
\end{align}
We claim that $\psi$ is an isomorphism.
Indeed, if
 $F_p\tilde{\imath}^{-1}\tilde{\imath}_+{\Omega}_{\wtl{Y}}\spcheck$
 denotes the filtration
 of $\tilde{\imath}^{-1}\tilde{\imath}_+{\Omega}_{\wtl{Y}}\spcheck$
 defined in a way similar to
 $F_p i^{-1}i_+{\cal L}$,
 then we get a map
\begin{align*}
\psi_p:\ 
&(\pi_K)_*\biggl(F_p\tilde{\imath}^{-1}
 \tilde{\imath}_+ {\Omega}_{\wtl{Y}}\spcheck
 \otimes_{\tilde{\imath}^{-1}{\cal O}_{\wtl{X}}} 
 \tilde{\imath}^{-1} \Bigl(
 \bigwedge^\bullet {\cal T}_{\wtl{X}/X}
 \otimes_{{\cal O}_{\wtl{X}}}
 \Omega_{\wtl{X}/X}
 \Bigr)\biggr)
 \otimes_{i^{-1}{\cal O}_X} {\cal V}\\
\to\ 
&(\pi_K)_*\biggl(F_p\tilde{\imath}^{-1}
 \tilde{\imath}_+ {\Omega}_{\wtl{Y}}\spcheck
 \otimes_{\tilde{\imath}^{-1}{\cal O}_{\wtl{X}}} 
 \tilde{\imath}^{-1} \Bigl(
 \bigwedge^\bullet {\cal T}_{\wtl{X}/X}
 \otimes_{{\cal O}_{\wtl{X}}}
 \Omega_{\wtl{X}/X}
 \Bigr)
 \otimes_{\pi_K^{-1}i^{-1}{\cal O}_{X}} 
 \pi_K^{-1}{\cal V} \biggr).
\end{align*}
It is enough to show that $\psi_p$ is an isomorphism for all $p\geq 0$
 because
 $\varinjlim_{p} F_p\tilde{\imath}^{-1}\tilde{\imath}_+ 
 {\Omega}_{\wtl{Y}}\spcheck
 \simeq \tilde{\imath}^{-1}\tilde{\imath}_+ {\Omega}_{\wtl{Y}}\spcheck$.
Since the ideal $\pi_K^{-1}(i^{-1}{\cal I}_Y)^{p+1}$
 of $\pi_K^{-1}i^{-1}{\cal O}_X$
 annihilates $F_p\tilde{\imath}^{-1}\tilde{\imath}_+ 
 {\Omega}_{\wtl{Y}}\spcheck$,
 we have
\begin{align*}
&(\pi_K)_*\biggl(F_p\tilde{\imath}^{-1}
 \tilde{\imath}_+ {\Omega}_{\wtl{Y}}\spcheck
 \otimes_{\tilde{\imath}^{-1}{\cal O}_{\wtl{X}}} 
 \tilde{\imath}^{-1} \Bigl(
 \bigwedge^\bullet {\cal T}_{\wtl{X}/X}
 \otimes_{{\cal O}_{\wtl{X}}}
 \Omega_{\wtl{X}/X}
 \Bigr)\biggr)
 \otimes_{i^{-1}{\cal O}_X} {\cal V}\\
\simeq\ 
&(\pi_K)_*\biggl(F_p\tilde{\imath}^{-1}
 \tilde{\imath}_+ {\Omega}_{\wtl{Y}}\spcheck
 \otimes_{\tilde{\imath}^{-1}{\cal O}_{\wtl{X}}} 
 \tilde{\imath}^{-1} \Bigl(
 \bigwedge^\bullet {\cal T}_{\wtl{X}/X}
 \otimes_{{\cal O}_{\wtl{X}}}
 \Omega_{\wtl{X}/X}
 \Bigr)\biggr)
 \otimes_{{\cal O}_{Y_{p+1}}} 
 ({\cal V}/(i^{-1}{\cal I}_Y)^{p+1}{\cal V}).
\end{align*}
By Definition \ref{de:assmod} (2), 
 ${\cal V}/(i^{-1}{\cal I}_Y)^{p+1}{\cal V}$ is a flat 
 ${\cal O}_{Y_{p+1}}$-module.
Hence the projection formula shows that $\psi_p$
 is an isomorphism and the claim is now verified.

The successive quotient of the filtration
\[ F_p{\cal M}:=
 F_p\tilde{\imath}^{-1}
 \tilde{\imath}_+ {\Omega}_{\wtl{Y}}\spcheck
 \otimes_{\tilde{\imath}^{-1}{\cal O}_{\wtl{X}}} 
 \tilde{\imath}^{-1} \Bigl(
 \bigwedge^d {\cal T}_{\wtl{X}/X}
 \otimes_{{\cal O}_{\wtl{X}}}
 \Omega_{\wtl{X}/X}
 \Bigr)
 \otimes_{\pi_K^{-1}i^{-1}{\cal O}_{X}} 
 \pi_K^{-1}{\cal V}\]
is 
\[(F_p\tilde{\imath}^{-1}\tilde{\imath}_+ {\Omega}_{\wtl{Y}}\spcheck/
 F_{p-1}\tilde{\imath}^{-1}
 \tilde{\imath}_+ {\Omega}_{\wtl{Y}}\spcheck)
 \otimes_{{\cal O}_{\wtl{Y}}} 
 \tilde{\imath}^* \Bigl(
 \bigwedge^d {\cal T}_{\wtl{X}/X}
 \otimes_{{\cal O}_{\wtl{X}}}
 \Omega_{\wtl{X}/X}
 \Bigr)
 \otimes_{{\cal O}_{\wtl{Y}}} 
 \pi_K^*({\cal V}/(i^{-1}{\cal I}_Y){\cal V}),\]
which is a quasi-coherent ${\cal O}_{\wtl{Y}}$-module.
Since $\wtl{Y}$ is affine, it follows that
${\rm H}^s(\wtl{Y}, F_p{\cal M}/F_{p-1}{\cal M}) =0$
for $s>0$.
Hence
 ${\rm H}^s(\wtl{Y}, F_p{\cal M}) =0$ and 
\[
{\rm H}^s\Bigl(\wtl{Y}, \tilde{\imath}^{-1}
 \tilde{\imath}_+ {\Omega}_{\wtl{Y}}\spcheck
 \otimes_{\tilde{\imath}^{-1}{\cal O}_{\wtl{X}}} 
 \tilde{\imath}^{-1} \Bigl(
 \bigwedge^d {\cal T}_{\wtl{X}/X}
 \otimes_{{\cal O}_{\wtl{X}}}
 \Omega_{\wtl{X}/X}
 \Bigr)
 \otimes_{\pi_K^{-1}i^{-1}{\cal O}_{X}} 
 \pi_K^{-1}{\cal V}\Bigr)=0
\]
for $s>0$.
By \eqref{isom1} and \eqref{projf}, we conclude that
\begin{align*}
&{\rm H}^s(Y, i^{-1}i_+{\cal L}\otimes_{i^{-1}{\cal O}_X} {\cal V})\\
\simeq \ 
 &{\rm H}^{s-u}
\Gamma \Bigl(\wtl{Y},\ 
 \tilde{\imath}^{-1}
 \tilde{\imath}_+ {\Omega}_{\wtl{Y}}\spcheck
 \otimes_{\tilde{\imath}^{-1}{\cal O}_{\wtl{X}}} 
 \tilde{\imath}^{-1} \Bigl(
 \bigwedge^\bullet {\cal T}_{\wtl{X}/X}
 \otimes_{{\cal O}_{\wtl{X}}}
 \Omega_{\wtl{X}/X}
 \Bigr)
 \otimes_{\pi_K^{-1}i^{-1}{\cal O}_X} 
 \pi_K^{-1} {\cal V} \Bigr).
\end{align*}
Since $ \tilde{\imath}^{-1} \tilde{\imath}_+ {\Omega}_{\wtl{Y}}\spcheck
 \otimes_{\tilde{\imath}^{-1}{\cal O}_{\wtl{X}}}
 \tilde{\imath}^{-1}\Omega_{\wtl{X}}
 \simeq {\cal O}_{\wtl{Y}} \otimes_{{\cal D}_{\wtl{Y}}}
 \tilde{\imath}^*{\cal D}_{\wtl{X}}$,
 we have 
\begin{align*}
& \tilde{\imath}^{-1}
 \tilde{\imath}_+ {\Omega}_{\wtl{Y}}\spcheck
 \otimes_{\tilde{\imath}^{-1}{\cal O}_{\wtl{X}}} 
 \tilde{\imath}^{-1} \Bigl(
 \bigwedge^\bullet {\cal T}_{\wtl{X}/X}
 \otimes_{{\cal O}_{\wtl{X}}}
 \Omega_{\wtl{X}/X}
 \Bigr)
 \otimes_{\pi_K^{-1}i^{-1}{\cal O}_X} 
 \pi_K^{-1} {\cal V} \\
\simeq\ 
&{\cal O}_{\wtl{Y}} \otimes_{{\cal D}_{\wtl{Y}}}
 \tilde{\imath}^*{\cal D}_{\wtl{X}}
 \otimes_{\tilde{\imath}^{-1}{\cal O}_{\wtl{X}}} 
 \tilde{\imath}^{-1} 
 \bigwedge^\bullet {\cal T}_{\wtl{X}/X}
 \otimes_{\pi_K^{-1}i^{-1}{\cal O}_X} 
 \pi_K^{-1} ({\cal V} \otimes_{i^{-1}{\cal O}_X} i^{-1}\Omega_X\spcheck).
\end{align*}
If we put
\begin{align*}
{\cal V}^{-d}:=\tilde{\imath}^{-1}\bigwedge^{d}{\cal T}_{\wtl{X}/X}
 \otimes_{\pi_K^{-1}i^{-1}{\cal O}_X}
 \pi_K^{-1}({\cal V} \otimes_{i^{-1}{\cal O}_X}
 i^{-1}\Omega_{X}\spcheck),
\end{align*}
then we obtain
\begin{align}
\label{shcohiso}
{\rm H}^s(Y, i^{-1}i_+{\cal L}\otimes_{i^{-1}{\cal O}_X} {\cal V})
\simeq 
 {\rm H}^{s-u}
\Gamma \bigl(\wtl{Y},\ 
{\cal O}_{\wtl{Y}} \otimes_{{\cal D}_{\wtl{Y}}}
 \tilde{\imath}^*{\cal D}_{\wtl{X}}
 \otimes_{\tilde{\imath}^{-1}{\cal O}_{\wtl{X}}} 
 {\cal V}^\bullet).
\end{align}
The boundary map
\[\partial:{\cal O}_{\wtl{Y}} \otimes_{{\cal D}_{\wtl{Y}}}
 \tilde{\imath}^*{\cal D}_{\wtl{X}}
 \otimes_{\tilde{\imath}^{-1}{\cal O}_{\wtl{X}}} 
 {\cal V}^{-d} \to {\cal O}_{\wtl{Y}} \otimes_{{\cal D}_{\wtl{Y}}}
 \tilde{\imath}^*{\cal D}_{\wtl{X}}
 \otimes_{\tilde{\imath}^{-1}{\cal O}_{\wtl{X}}} 
 {\cal V}^{-d+1}\]
is given by
\begin{align*}
 &f\otimes D \otimes \wtl{\xi_1} \wedge \cdots\wedge \wtl{\xi_d} \otimes v\\
&\mapsto
\sum_{i=1}^d (-1)^{i+1} f\otimes D \wtl{\xi_i}
 \otimes \wtl{\xi_1} \wedge \cdots \wedge \widehat{\wtl{\xi_i}}
 \wedge \cdots \wedge \wtl{\xi_d} \otimes v\\
&+\sum_{1\leq i<j\leq d} (-1)^{i+j} f\otimes D \otimes
 [\wtl{\xi_i}, \wtl{\xi_j}] \wedge \wtl{\xi_1} \wedge
 \cdots \wedge \widehat{\wtl{\xi_i}} \wedge \cdots \wedge
 \widehat{\wtl{\xi_j}} \wedge \cdots \wedge \wtl{\xi_d} \otimes v,
\end{align*}
where $f\in{\cal O}_{\wtl{Y}}$,
 $D\in \tilde{\imath}^*{\cal D}_{\wtl{X}}$,
 $\wtl{\xi_1},\dots,\wtl{\xi_d}\in\tilde{\imath}^{-1}{\cal T}_{\wtl{X}/X}$,
 and
 $v\in \pi_K^{-1}({\cal V}\otimes_{i^{-1}{\cal O}_X} i^{-1}\Omega_X\spcheck)$.

The right side of \eqref{shcohiso}
 can be computed by using the following lemma.
\begin{lem}
\label{affiso}
Let $V'$ be an $L$-module, or equivalently an $(\frak{l}, L)$-module.
Let ${\cal V}'$ be an 
 $\tilde{\imath}^{-1}\wtl{\frak{g}}_{\wtl{X}}$-module
 associated with $V'$.
Then 
\[\Gamma(\wtl{Y},  {\cal O}_{\wtl{Y}}
 \otimes_{{\cal D}_{\wtl{Y}}}
 \tilde{\imath}^*{\cal D}_{\wtl{X}}
 \otimes_{\tilde{\imath}^{-1}{\cal O}_{\wtl{X}}} 
 {\cal V}' )
\simeq R(\frak{g},K)\otimes_{R(L)} V'.
\]
\end{lem}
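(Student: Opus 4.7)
The plan is to compute the global sections on the left-hand side by
unraveling the ${\cal D}$-module structure via \eqref{ugdiso},
exploiting the affineness of $\wtl{Y}$,
and matching the result with $R(\frak{g},K) \otimes_{R(L)} V'$.
The enabling Peter--Weyl-type fact, which holds for $K$-equivariant
quasi-coherent sheaves on $\wtl{Y}=K/L$ and on its $K$-stable infinitesimal
thickenings, is that such a sheaf with ``fiber'' at the base point $o$
equal to an $L$-module $W$ has global sections
$\Gamma \simeq R(K) \otimes_{R(L)} W$;
the extra $U(\frak{g})$-factor on the right-hand side of the lemma
will emerge from the transfer bimodule $\tilde{\imath}^*{\cal D}_{\wtl{X}}$.

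First, I would filter the sheaf
${\cal F} := {\cal O}_{\wtl{Y}} \otimes_{{\cal D}_{\wtl{Y}}}
\tilde{\imath}^*{\cal D}_{\wtl{X}} \otimes_{\tilde{\imath}^{-1}{\cal O}_{\wtl{X}}} {\cal V}'$
using the filtration
$F_p\tilde{\imath}^{-1}\tilde{\imath}_+\Omega_{\wtl{Y}}\spcheck$
analogous to $F_p i^{-1}i_+{\cal L}$ from the preceding argument.
Since $F_p$ is annihilated by $(\tilde{\imath}^{-1}{\cal I}_{\wtl{Y}})^{p+1}$,
tensoring with ${\cal V}'$ reduces to tensoring with the
$K$-equivariant quasi-coherent sheaf
${\cal V}'/(\tilde{\imath}^{-1}{\cal I}_{\wtl{Y}})^{p+1}{\cal V}'$
on $\wtl{Y}_{p+1}$, and ${\cal F}$ is the inductive limit of these
$K$-equivariant quasi-coherent sheaves on infinitesimal thickenings.
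Global sections commute with this colimit on the affine $\wtl{Y}$.

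Second, using \eqref{ugdiso} and the PBW-type isomorphism
$\tilde{\imath}^* U(\wtl{\frak{g}}_{\wtl{X}}) \simeq {\cal O}_{\wtl{Y}} \otimes_{\bb{C}} U(\frak{g})$,
I would write $\tilde{\imath}^*{\cal D}_{\wtl{X}}$ as a quotient of
${\cal O}_{\wtl{Y}} \otimes_{\bb{C}} U(\frak{g})$ by the right action of
the subsheaf $\tilde{\imath}^{-1}{\cal H}_{\wtl{X}}$ (with typical fiber $\frak{l}$);
tensoring with ${\cal O}_{\wtl{Y}} \otimes_{{\cal D}_{\wtl{Y}}}(\cdot)$ further
imposes the relations from the left action of $\wtl{\frak{k}}_{\wtl{Y}}$
modulo ${\cal H}_{\wtl{Y}}$. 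After these simplifications, the formal ``fiber at $o$''
of the resulting $K$-equivariant sheaf is
$W = \bb{C} \otimes_{U(\frak{k})} U(\frak{g}) \otimes_{U(\frak{l})} V'$
(using Definition~\ref{de:assmod}(5) for the identification ${\cal V}'|_o \simeq V'$).
The Peter--Weyl identity from the first paragraph then yields
$\Gamma(\wtl{Y}, {\cal F}) \simeq R(K) \otimes_{R(L)} W
\simeq R(K) \otimes_{U(\frak{k})} U(\frak{g}) \otimes_{U(\frak{l})} V'
= R(\frak{g},K) \otimes_{R(L)} V'$.

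The principal obstacle lies in making the Peter--Weyl identification rigorous
in the presence of the full transfer-bimodule structure and in carefully
tracking how the fiber-at-$o$ computation proceeds through the successive
quotients. A clean way to conclude is to construct maps in both directions
explicitly: forward, send $S \otimes \xi \otimes v \in R(\frak{g},K) \otimes_{R(L)} V'$
to the global section obtained by lifting $v$ via Definition~\ref{de:assmod}(5),
acting by $\xi \in U(\frak{g})$ through the $\tilde{\imath}^{-1}\wtl{\frak{g}}_{\wtl{X}}$-structure,
and translating by the distribution $S$ through $K$-equivariance;
backward, evaluate a $K$-finite global section at the base point
and project to $V'$. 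Verifying that both are well-defined (descend
through all the tensor products and quotients) and mutually inverse
constitutes the bulk of the remaining work.
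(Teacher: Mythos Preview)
Your overall strategy --- reduce to a fiber computation and invoke the equivalence between $K$-equivariant quasi-coherent sheaves on $K/L$ and $L$-modules --- matches the paper's. However, the specific route you take conflates two distinct $\frak{k}$-actions and does not go through as written. The sheaf ${\cal F}={\cal O}_{\wtl Y}\otimes_{{\cal D}_{\wtl Y}}\tilde\imath^*{\cal D}_{\wtl X}\otimes_{\tilde\imath^{-1}{\cal O}_{\wtl X}}{\cal V}'$ is \emph{not} a $K$-equivariant quasi-coherent ${\cal O}_{\wtl Y}$-module in a way that lets you read off its fiber at $o$ as $\bb{C}\otimes_{U(\frak{k})}U(\frak{g})\otimes_{U(\frak{l})}V'$: the operation ${\cal O}_{\wtl Y}\otimes_{{\cal D}_{\wtl Y}}(\cdot)$ quotients by the left ${\cal D}_{\wtl Y}$-action, which is a global operation on sections and does not localize to ``$\bb{C}\otimes_{U(\frak{k})}$'' at the fiber. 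Correspondingly, your final algebraic identity $R(K)\otimes_{R(L)}\bigl(\bb{C}\otimes_{U(\frak{k})}U(\frak{g})\otimes_{U(\frak{l})}V'\bigr)\simeq R(\frak{g},K)\otimes_{R(L)}V'$ is not the right passage: the $U(\frak{k})$ you want to divide by is not acting through the $L$-module structure on $W$.

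The paper fixes this by reversing the order of the two reductions. First it applies the Peter--Weyl equivalence to $\tilde\imath^*{\cal D}_{\wtl X}\otimes_{\tilde\imath^{-1}{\cal O}_{\wtl X}}{\cal V}'$ itself, which \emph{is} a $K$-equivariant quasi-coherent ${\cal O}_{\wtl Y}$-module (via the filtration), with fiber $W=U(\frak{g})\otimes_{U(\frak{l})}V'$ at $o$; this yields $\Gamma(\wtl Y,\tilde\imath^*{\cal D}_{\wtl X}\otimes{\cal V}')\simeq R(K)\otimes_{R(L)}W$. Only then does it interpret the remaining tensor ${\cal O}_{\wtl Y}\otimes_{{\cal D}_{\wtl Y}}(\cdot)$: since $\Gamma(\wtl Y,{\cal D}_{\wtl Y})$ is generated by $U(\frak{k})$ and ${\cal O}(\wtl Y)$, this amounts to taking the quotient of global sections by $\omega(\frak{k})$, where $\omega(\eta)=\nu(\eta)-\rho(\eta)$ is the difference between the $\frak{k}$-action coming from $K$-equivariance and the one coming from the right $\frak{g}$-action. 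The weak $(\frak{g},K)$-module structure (from Definition~\ref{de:assmod}~(3),~(4)) is exactly what ensures this quotient is $R(\frak{g},K)\otimes_{R(L)}V'$, by a Hecke-algebra computation carried out in \cite[Lemma~3.4]{Os11}. The missing idea in your proposal is this separation of the two $\frak{k}$-actions $\nu$ and $\rho$; once you have it, your ``explicit maps in both directions'' become straightforward to write down and verify.
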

\begin{proof}
The proof is similar to that of \cite[Lemma 3.4]{Os11}.

Using the right $\tilde{\imath}^{-1}{\cal D}_{\wtl{X}}$-module
 structure of
 $\tilde{\imath}^*{\cal D}_{\wtl{X}}$,
 we can define a $\frak{g}$-action $\rho$ on
 the sheaf $\tilde{\imath}^*{\cal D}_{\wtl{X}}
 \otimes_{\tilde{\imath}^{-1}{\cal O}_{\wtl{X}}} {\cal V}'$ by 
\[
\rho(\xi)(D\otimes v):= -D\xi_{\wtl{X}}\otimes v+D\otimes \xi v
\]
for $\xi\in\frak{g}$, $D\in \tilde{\imath}^*{\cal D}_{\wtl{X}}$, and
 $v\in {\cal V}'$.
Moreover,
 the sheaf $\tilde{\imath}^*{\cal D}_{\wtl{X}}
  \otimes_{\tilde{\imath}^{-1}{\cal O}_{\wtl{X}}} {\cal V}'$
 is $K$-equivariant.
We denote this $K$-action and also its infinitesimal
 $\frak{k}$-action by $\nu$.
Definition \ref{de:assmod} (4) implies that 
 the $\frak{k}$-action $\nu$ is given by
\[
\nu(\eta)(D\otimes v)= \eta_{\wtl{Y}} D \otimes v - D\eta_{\wtl{X}} \otimes v
+D\otimes \eta v
\]
for $\eta\in\frak{k}$.
Here, $\eta_{\wtl{Y}} D$ and $D\eta_{\wtl{X}}$ are defined by
 the $({\cal D}_{\wtl{Y}}, \tilde{\imath}^{-1}{\cal D}_{\wtl{X}})$-bimodule
 structure on $\tilde{\imath}^*{\cal D}_{\wtl{X}}$.
Then it follows from Definition \ref{de:assmod} (3) that 
 $\Gamma(\wtl{Y}, \tilde{\imath}^*{\cal D}_{\wtl{X}}
 \otimes_{\tilde{\imath}^{-1}{\cal O}_{\wtl{X}}} {\cal V}')$
 is a weak $(\frak{g},K)$-module in the sense of \cite{BeLu}, namely,
\begin{align*}
\nu(k)\rho(\xi)\nu(k^{-1})=\rho(\Ad (i(k))\xi)
\end{align*}
 for $k\in K$ and $\xi\in\frak{g}$.
Put $\omega(\eta):=\nu(\eta)-\rho(\eta)$
 for $\eta\in\frak{k}$.
Then $\omega(\eta)$ is given by
\[
\omega(\eta)(D\otimes v)= \eta_{\wtl{Y}} D\otimes v.
\]
Since $\wtl{Y}$ is an affine variety, 
 $\Gamma(\wtl{Y}, {\cal D}_{\wtl{Y}})$ is generated by $U(\frak{k})$
 and ${\cal O}(\wtl{Y})$ as an algebra.
Therefore,
\begin{align*}
&\Gamma(\wtl{Y}, {\cal O}_{\wtl{Y}}
 \otimes_{{\cal D}_{\wtl{Y}}} \tilde{\imath}^*{\cal D}_{\wtl{X}}
 \otimes_{\tilde{\imath}^{-1}{\cal O}_{\wtl{X}}}
 {\cal V}') \\
\simeq\ &{\cal O}(\wtl{Y})
 \otimes_{\Gamma(\wtl{Y}, {\cal D}_{\wtl{Y}})}
 \Gamma(\wtl{Y}, \tilde{\imath}^*{\cal D}_{\wtl{X}}
 \otimes_{\tilde{\imath}^{-1}{\cal O}_{\wtl{X}}}
 {\cal V}')\\
\simeq\ &\Gamma(\wtl{Y}, \tilde{\imath}^*{\cal D}_{\wtl{X}}
 \otimes_{\tilde{\imath}^{-1}{\cal O}_{\wtl{X}}}
  {\cal V}')
 \,/\,\omega(\frak{k})
 \Gamma(\wtl{Y}, {\tilde{\imath}}^*{\cal D}_{\wtl{X}}
 \otimes_{{\tilde{\imath}}^{-1}{\cal O}_{\wtl{X}}} 
 {\cal V}').
\end{align*}

Let $e\in K$ be the identity element. 
Write $o:=eL\in \wtl{Y}$ for the base point
 and $i_{o}:\{o\}\to \wtl{Y}$ for the inclusion map.
Let ${\cal I}_o$ be the maximal ideal of ${\cal O}_{\wtl{Y}}$
 corresponding to $o$.
The fiber of
 $\tilde{\imath}^*{\cal D}_{\wtl{X}}
 \otimes_{\tilde{\imath}^{-1}{\cal O}_{\wtl{X}}} {\cal V}'$
 at $o$ is given by
\begin{align*}
W:=\ &i_{o}^*
 (\tilde{\imath}^*{\cal D}_{\wtl{X}}
 \otimes_{\tilde{\imath}^{-1}{\cal O}_{\wtl{X}}} {\cal V}')\\
 \simeq\ 
&\Gamma(\wtl{Y}, \tilde{\imath}^*{\cal D}_{\wtl{X}}
 \otimes_{\tilde{\imath}^{-1}{\cal O}_{\wtl{X}}} {\cal V}')
 \,/\, {\cal I}_o(\wtl{Y})
 \Gamma(\wtl{Y}, \tilde{\imath}^*{\cal D}_{\wtl{X}}
 \otimes_{\tilde{\imath}^{-1}{\cal O}_{\wtl{X}}}{\cal V}').
\end{align*}
The actions $\rho$ and $\nu$ on
 $\tilde{\imath}^*{\cal D}_{\wtl{X}}
 \otimes_{\tilde{\imath}^{-1}{\cal O}_{\wtl{X}}}{\cal V}'$
 induce a $\frak{g}$-action and an
 $L$-action on $W$.
With these actions, $W$ becomes a $(\frak{g},L)$-module
 and there is an isomorphism
\begin{align}
\label{varphiisom}
\varphi : U(\frak{g})\otimes_{U(\frak{l})} V'
 \xrightarrow{\sim} W.
\end{align}
This can be proved 
 by using \cite[Lemma 3.3]{Os11}
 and Definition~\ref{de:assmod}
 (see the proof of \cite[Lemma 3.4]{Os11}).
Hence we have 
\[
\Gamma(\wtl{Y},
 \tilde{\imath}^*{\cal D}_{\wtl{X}}
 \otimes_{\tilde{\imath}^{-1}{\cal O}_{\wtl{X}}}
  {\cal V}')\simeq
R(K)\otimes_{R(L)} (U(\frak{g})\otimes_{U(\frak{l})} V').
\]

The rest is the same as \cite[Lemma 3.4]{Os11}.
\end{proof}

Returning to the proof of Theorem~\ref{loccoh}, 
 let us compute the cohomological induction
 $(P_{\frak{h},L}^{\frak{g},K})_s
 (V\otimes \bigwedge^{\rm top}(\frak{g}/\frak{h}))$
 by using the standard resolution (\cite[\S II.7]{KnVo}).
The standard resolution 
 is a projective resolution of  
 the $(\frak{h},L)$-module
 $V\otimes \bigwedge^{\rm top}(\frak{g}/\frak{h})$
 given by the complex 
\begin{align*}
&U(\frak{h})\otimes_{U(\frak{l})}
\Bigl(\bigwedge^{\bullet}(\frak{h}/\frak{l})
 \otimes V\otimes 
 \bigwedge^{\rm top}(\frak{g}/\frak{h})\Bigr),
\end{align*}
where the boundary map 
\[\partial' : 
U(\frak{h})\otimes_{U(\frak{l})}\Bigl(\bigwedge^{d}(\frak{h}/\frak{l})
 \otimes V\otimes 
 \bigwedge^{\rm top}(\frak{g}/\frak{h})\Bigr)
\to
U(\frak{h})\otimes_{U(\frak{l})}\Bigl(\bigwedge^{d-1}(\frak{h}/\frak{l})
 \otimes V\otimes 
 \bigwedge^{\rm top}(\frak{g}/\frak{h})\Bigr)
\]
 is 
\begin{align*}
& D\otimes \ovl{\xi_1}\wedge\cdots\wedge\ovl{\xi_d}
  \otimes v\\
\mapsto\ 
&\sum_{i=1}^d (-1)^{i+1}(D\xi_i\otimes
 \ovl{\xi_1}\wedge\cdots\wedge\widehat{\ovl{\xi_i}}
 \wedge\cdots\wedge\ovl{\xi_d}\otimes v
 -D\otimes
 \ovl{\xi_1}\wedge\cdots\wedge\widehat{\ovl{\xi_i}}
 \wedge\cdots\wedge\ovl{\xi_d}\otimes \xi_i v)\\
+\ 
&\sum_{1\leq i<j\leq d} (-1)^{i+j}D\otimes
 \ovl{[\xi_i,\xi_j]}\wedge\ovl{\xi_1}\wedge\cdots
 \wedge\widehat{\ovl{\xi_i}}\wedge\cdots\wedge\widehat{\ovl{\xi_j}}
 \wedge\cdots\wedge\ovl{\xi_d}\otimes v
\end{align*}
for $D\in U(\frak{h})$, $\xi_1,\cdots,\xi_d\in\frak{h},$
 and $v\in V\otimes \bigwedge^{\rm top}(\frak{g}/\frak{h})$.
Therefore,
\begin{align}
\label{cohiso}
(P_{\frak{h},L}^{\frak{g},K})_{u-s}
\Bigl(V\otimes\bigwedge^{\rm top}(\frak{g}/\frak{h})\Bigr)
&\simeq
{\rm H}^{s-u}
P_{\frak{h},L}^{\frak{g},K}
 \biggl(
 U(\frak{h})\otimes_{U(\frak{l})}\Bigl(\bigwedge^{\bullet}(\frak{h}/\frak{l})
 \otimes V\otimes 
 \bigwedge^{\rm top}(\frak{g}/\frak{h})\Bigr)\biggr)\\ \nonumber
&\simeq
{\rm H}^{s-u}
R(\frak{g},K)\otimes_{R(L)}
\Bigl(\bigwedge^{\bullet}(\frak{h}/\frak{l})
 \otimes V\otimes 
 \bigwedge^{\rm top}(\frak{g}/\frak{h})\Bigr),
\end{align}
where the boundary map 
\[\partial':
 R(\frak{g},K)\otimes_{R(L)}
\Bigl(\bigwedge^{d}(\frak{h}/\frak{l})
 \otimes V\otimes 
 \bigwedge^{\rm top}(\frak{g}/\frak{h})\Bigr)\to
 R(\frak{g},K)\otimes_{R(L)}
\Bigl(\bigwedge^{d-1}(\frak{h}/\frak{l})
 \otimes V\otimes 
 \bigwedge^{\rm top}(\frak{g}/\frak{h})\Bigr)\]
is given by
\begin{align*}
& D\otimes \ovl{\xi_1}\wedge\cdots\wedge\ovl{\xi_d}
  \otimes v\\
\mapsto\ 
&\sum_{i=1}^d (-1)^{i+1}(D\xi_i\otimes
 \ovl{\xi_1}\wedge\cdots\wedge\widehat{\ovl{\xi_i}}
 \wedge\cdots\wedge\ovl{\xi_d}\otimes v
 -D\otimes
 \ovl{\xi_1}\wedge\cdots\wedge\widehat{\ovl{\xi_i}}
 \wedge\cdots\wedge\ovl{\xi_d}\otimes \xi_i v)\\
+\ 
&\sum_{1\leq i<j\leq d} (-1)^{i+j}D\otimes
 \ovl{[\xi_i,\xi_j]}\wedge\ovl{\xi_1}\wedge\cdots
 \wedge\widehat{\ovl{\xi_i}}\wedge\cdots\wedge\widehat{\ovl{\xi_j}}
 \wedge\cdots\wedge\ovl{\xi_d}\otimes v
\end{align*}
for $D\in R(\frak{g},K)$, $\xi_1,\cdots,\xi_d\in\frak{h},$
 and $v\in V\otimes \bigwedge^{\rm top}(\frak{g}/\frak{h})$.

Put 
\begin{align*}
V^{-d}:=\bigwedge^{d}(\frak{h}/\frak{l})\otimes V\otimes
 \bigwedge^{\rm top}(\frak{g}/\frak{h})
\end{align*}
 for simplicity.
We identify the fiber of ${\cal T}_{\wtl{X}/X}$ with
 $\frak{h}/\frak{l}$ in the following way:
 if a vector field $\wtl{\xi}\in{\cal T}_{\wtl{X}/X}$ equals $-\xi_{\wtl{X}}$
 at the base point $eL \in \wtl{X}$ for $\xi\in\frak{h}$,
 then $\wtl{\xi}$ takes the value $\ovl{\xi}\in\frak{h}/\frak{l}$
 at $e \in G$.
Similarly, the fiber of $\Omega_{\wtl{X}/X}\spcheck$ is
 identified with $\bigwedge^{\rm top}(\frak{g}/\frak{h})$.
Then ${\cal V}^{-d}$ is associated with $V^{-d}$ by
 Example~\ref{vectbdle} and Example~\ref{tensor}.
From \eqref{shcohiso} and \eqref{cohiso}
 it is enough to show that
 the isomorphisms $\varphi$ given in Lemma \ref{affiso} for
 $V'=V^{-d}$, 
 $0\leq d\leq {\rm dim}(\frak{h}/\frak{l})$
 commute with the boundary maps, that is, the diagram
\begin{align*}
\xymatrix{
R(\frak{g}, K)\otimes_{R(L)} V^{-d} 
\ar[r]^*+{\partial'} \ar[d]
& R(\frak{g}, K)\otimes_{R(L)} V^{-d+1} \ar[d]\\
\Gamma(\wtl{Y},  {\cal O}_{\wtl{Y}}
 \otimes_{{\cal D}_{\wtl{Y}}}
 \tilde{\imath}^*{\cal D}_{\wtl{X}}
 \otimes_{\tilde{\imath}^{-1}{\cal O}_{\wtl{X}}} 
 {\cal V}^{-d}) \ar[r]^>>>>>*+{\partial}   
& \Gamma(\wtl{Y},  {\cal O}_{\wtl{Y}}
 \otimes_{{\cal D}_{\wtl{Y}}}
 \tilde{\imath}^*{\cal D}_{\wtl{X}}
 \otimes_{\tilde{\imath}^{-1}{\cal O}_{\wtl{X}}} 
 {\cal V}^{-d+1}) 
}
\end{align*}
commutes.
In view of the proof of Lemma \ref{affiso}, the above diagram
 is obtained by applying the functor $P_{\frak{g},L}^{\frak{g},K}$ to
\begin{align}
\label{diagram}
\xymatrix{
U(\frak{g})\otimes_{U(\frak{l})} V^{-d} 
\ar[r]^*+{\partial'} \ar[d]_*+{\varphi^{d}} 
& U(\frak{g})\otimes_{U(\frak{l})} V^{-d+1}
 \ar[d]^*+{\varphi^{d-1}}\\
 i_o^*(\tilde{\imath}^*{\cal D}_{\wtl{X}}
 \otimes_{\tilde{\imath}^{-1}{\cal O}_{\wtl{X}}}
 {\cal V}^{-d}) \ar[r]^>>>>>*+{\partial}   
& i_o^*(\tilde{\imath}^*{\cal D}_{\wtl{X}}
 \otimes_{\tilde{\imath}^{-1}{\cal O}_{\wtl{X}}}
 {\cal V}^{-d+1}),
}
\end{align}
where $\varphi^d$ is the map $\varphi$ of \eqref{varphiisom} for $V'=V^{-d}$.
Therefore, it suffices to show that the diagram \eqref{diagram} commutes.

To see this, we use the following notation.
A section 
$f\in \tilde{\imath}^*{\cal D}_{\wtl{X}}
 \otimes_{\tilde{\imath}^{-1}{\cal O}_{\wtl{X}}} {\cal V}^{-d}$
 defines a section of $i_o^*(\tilde{\imath}^*{\cal D}_{\wtl{X}}
 \otimes_{\tilde{\imath}^{-1}{\cal O}_{\wtl{X}}} {\cal V}^{-d})$
 and hence defines an element of $U(\frak{g})\otimes_{U(\frak{l})} V^{-d}$ via
 the isomorphism $\varphi^d$.
We write $i_o^*f\in U(\frak{g})\otimes_{U(\frak{l})} V^{-d}$ for this element.
Put $Z:=H/L$ and write $i_Z:Z\to \wtl{X}$ for the inclusion map.
Then $i_Z(Z)=\pi^{-1}(\{o\})$ and there is a canonical isomorphism
 $i_Z^*{\cal T}_{\wtl{X}/X}\simeq {\cal T}_Z$.
For $\xi_1,\cdots,\xi_d\in\frak{h}$ and
$v\in V\otimes \bigwedge^{\rm top}(\frak{g}/\frak{h})$, 
 put
\[m:=\ovl{\xi_1}\wedge\cdots\wedge\ovl{\xi_d}\otimes v\,\in\, V^{-d}.\]
We will choose sections
 $\wtl{\xi_i} \in \tilde{\imath}^{-1}{\cal T}_{\wtl{X}/X}$
 and
 $\wtl{v}\in
 \pi_K^{-1}({\cal V}\otimes_{i^{-1}{\cal O}_X}i^{-1}\Omega_X\spcheck)$
 on a neighborhood of the base point $o\in \wtl{Y}$ in the following way.
Take $\wtl{\xi_i}\in {\cal T}_{\wtl{X}/X}$ 
 such that $\wtl{\xi_i}|_{Z}
 \in i_Z^*{\cal T}_{\wtl{X}/X}$
 corresponds to $-(\xi_i)_{Z}$.
Then it gives a section of $\tilde{\imath}^{-1}{\cal T}_{\wtl{X}/X}$,
 which we denote by the same letter $\wtl{\xi_i}$.
We take a section $\wtl{v}\in
 \pi_K^{-1}({\cal V} \otimes_{i^{-1}{\cal O}_X} i^{-1}\Omega_X\spcheck)$
 on a neighborhood of $o$ 
 such that $i_o^*\wtl{v}$ corresponds to $v$.
Define a section $\wtl{m}\in {\cal V}^{-d}$
 in a neighborhood of $o$ as
\begin{align*}
\wtl{m}:=\wtl{\xi_1}\wedge\cdots\wedge\wtl{\xi_d}
 \otimes \wtl{v}
 \,\in\, {\cal V}^{-d}.
\end{align*}
Then the element $\varphi^d(1\otimes m)$ is represented by the section
\[1\otimes \wtl{m}\in
\tilde{\imath} {\,}^*{\cal D}_{\wtl{X}} 
 \otimes_{\tilde{\imath}^{-1}{\cal O}_{\wtl{X}}}
 {\cal V}^{-d},
\]
in other words, $i_o^* (1\otimes \wtl{m})=1\otimes m$.

We have
\begin{align*}
&\partial(1\otimes \wtl{m})\\
=\ 
 &\sum_{i=1}^d(-1)^{i+1} (\wtl{\xi_i} \otimes 
 \wtl{\xi_1}\wedge\cdots\wedge\widehat{\wtl{\xi_i}}\wedge\cdots\wedge
 \wtl{\xi_d}\otimes \wtl{v}) \\
 +\ &\sum_{1\leq i<j\leq d}(-1)^{i+j}\, 
 \bigl( 1\otimes [\wtl{\xi_i},\wtl{\xi_j}]\wedge
 \wtl{\xi_1}\wedge\cdots
 \wedge\widehat{\wtl{\xi_i}}\wedge\cdots\wedge\widehat{\wtl{\xi_j}}\wedge
 \cdots\wedge\wtl{\xi_d}\otimes \wtl{v}\bigr)
\end{align*}
and
\begin{align*}
&\partial'(1\otimes m)\\
=\ & \sum_{i=1}^d (-1)^{i+1} \bigl(\xi_i \otimes
 \ovl{\xi_1}\wedge\cdots\wedge\widehat{\ovl{\xi_i}}
 \wedge\cdots\wedge\ovl{\xi_d}
 \otimes v
- 1 \otimes
 \ovl{\xi_1}\wedge\cdots\wedge\widehat{\ovl{\xi_i}}
 \wedge\cdots\wedge\ovl{\xi_d}
 \otimes \xi_i v\bigr) \\
+\ &\sum_{1\leq i<j\leq d}(-1)^{i+j}\, 
 \bigl( 1 \otimes \ovl{[\xi_i,\xi_j]}\wedge
 \ovl{\xi_1}\wedge\cdots
 \wedge\widehat{\ovl{\xi_i}}\wedge\cdots\wedge
 \widehat{\ovl{\xi_j}}\wedge
 \cdots\wedge\ovl{\xi_d} \otimes v \bigr).
\end{align*}
Since $\wtl{\xi_i}|_{Z}$ corresponds to $-(\xi_i)_{Z}$, 
 the vector fields
 $\wtl{\xi_i}$ and $(\xi_i)_{\wtl{X}}$ have the relation 
 $\wtl{\xi_i} =-(\xi_i)_{\wtl{X}}$ at $o$.
Recall that the $\frak{g}$-action on
 ${\cal T}_{\wtl{X}/X}$
 is defined as the differential of the $G$-equivariant structure on it.
Hence our choice implies that $\xi_i\cdot\wtl{\xi_j}|_{Z}= -([\xi_i,\xi_j])_Z$.
As a result,  
\begin{align*}
&i_o^*\bigl(\wtl{\xi_i} \otimes 
 \wtl{\xi_1}\wedge\cdots\wedge\widehat{\wtl{\xi_i}}\wedge\cdots\wedge
 \wtl{\xi_d}\otimes \wtl{v}
 \bigr)\\ \nonumber
=\ &i_o^*
 \bigl(\rho(\xi_i)(1\otimes \wtl{\xi_1}\wedge\cdots\wedge\widehat{\wtl{\xi_i}}
 \wedge\cdots\wedge\wtl{\xi_d}\otimes \wtl{v})\bigr)
- i_o^*(1\otimes \wtl{\xi_1}\wedge\cdots\wedge\widehat{\wtl{\xi_i}}
 \wedge\cdots\wedge\wtl{\xi_d}\otimes \xi_i\wtl{v}) \\ \nonumber
& -\sum_{1\leq i<j\leq d} i_o^*\bigl(1\otimes 
 \wtl{\xi_1}\wedge\cdots\wedge\widehat{\wtl{\xi_i}}\wedge\cdots\wedge
 \wtl{\xi}_{j-1}\wedge (\xi_i \cdot \wtl{\xi_j}) \wedge
 \wtl{\xi}_{j+1}\wedge \cdots\wedge\wtl{\xi_d} \otimes \wtl{v}) \\ \nonumber
& -\sum_{1\leq j < i \leq d} i_o^*\bigl(1\otimes 
 \wtl{\xi_1}\wedge\cdots\wedge
 \wtl{\xi}_{j-1}\wedge
 (\xi_i\cdot \wtl{\xi_j}) \wedge \wtl{\xi}_{j+1}\wedge
 \cdots\wedge\widehat{\wtl{\xi_i}}\wedge\cdots
 \wedge\wtl{\xi_d}
 \otimes \wtl{v}) \\ \nonumber
=\ & \xi_i\otimes
 \ovl{\xi_1}\wedge\cdots\wedge\widehat{\ovl{\xi_i}}
 \wedge\cdots\wedge\ovl{\xi_d}
 \otimes v
 - 1\otimes \ovl{\xi_1}\wedge\cdots\wedge\widehat{\ovl{\xi_i}}
 \wedge\cdots\wedge\ovl{\xi_d}
 \otimes \xi_i v \\
& +\sum_{1\leq i<j\leq d} (-1)^{j+1}
 \bigl(1\otimes \ovl{[\xi_i, \xi_j]}
 \wedge \ovl{\xi_1}\wedge\cdots\wedge\widehat{\ovl{\xi_i}}\wedge\cdots\wedge
 \widehat{\ovl{\xi_j}}\wedge \cdots\wedge \ovl{\xi_d} \otimes v) \\ \nonumber
& +\sum_{1\leq j < i\leq d} (-1)^{j} \bigl(1\otimes \ovl{[\xi_i, \xi_j]}
 \wedge \ovl{\xi_1}\wedge\cdots\wedge\widehat{\ovl{\xi_j}}\wedge\cdots\wedge
 \widehat{\ovl{\xi_i}}\wedge \cdots\wedge \ovl{\xi_d} \otimes v).
\end{align*}
Moreover, $[\wtl{\xi_i},\wtl{\xi_j}]|_Z$ corresponds to
 $[-(\xi_i)_{Z}, -(\xi_j)_{Z}] =([\xi_i,\xi_j])_{Z}$.
Hence 
\begin{align*}
 i_o^*&\bigl(1\otimes [\wtl{\xi_i},\wtl{\xi_j}]\wedge
 \wtl{\xi_1}\wedge\cdots
 \wedge\widehat{\wtl{\xi_i}}\wedge\cdots\wedge\widehat{\wtl{\xi_j}}\wedge
 \cdots\wedge\wtl{\xi_d}\otimes \wtl{v}\bigr)\\
=\ &-1\otimes \ovl{[\xi_i, \xi_j]}\wedge
 \ovl{\xi_1}\wedge\cdots
 \wedge\widehat{\ovl{\xi_i}}\wedge\cdots\wedge\widehat{\ovl{\xi_j}}\wedge
 \cdots\wedge \ovl{\xi_d}\otimes v.
\end{align*}
We thus conclude that
\begin{align*}
&(\varphi^{d-1})^{-1}\circ \partial\circ \varphi^d(1\otimes m)\\
=\ &i_o^*(\partial(1\otimes \wtl{m}))\\
=\ &i_o^*\biggl( 
\sum_{i=1}^d (-1)^{i+1}(\wtl{\xi_i}\otimes \wtl{\xi_1}\wedge
 \cdots\wedge\widehat{\wtl{\xi_i}}\wedge\cdots\wedge
 \wtl{\xi_d}\otimes \wtl{v})\\
&\qquad +\sum_{1\leq i<j\leq d}(-1)^{i+j}(1\otimes [\wtl{\xi_i},\wtl{\xi_j}]
 \wedge\wtl{\xi_1}\wedge\cdots\wedge
 \widehat{\wtl{\xi_i}}\wedge\cdots\wedge
 \widehat{\wtl{\xi_j}}\wedge\cdots\wedge
 \wtl{\xi_d}\otimes \wtl{v})
\biggr)\\
=\ &\sum_{i=1}^d (-1)^{i+1} \biggl( \xi_i\otimes
 \ovl{\xi_1}\wedge\cdots\wedge
 \widehat{\ovl{\xi_i}}\wedge\cdots\wedge\ovl{\xi_d}
 \otimes v
 - 1\otimes \ovl{\xi_1}\wedge\cdots\wedge
 \widehat{\ovl{\xi_i}}\wedge\cdots\wedge\ovl{\xi_d}
 \otimes \xi_i v \\
& \qquad +\sum_{1\leq i<j\leq d} (-1)^{j+1} \bigl(1\otimes \ovl{[\xi_i, \xi_j]}
 \wedge \ovl{\xi_1}\wedge\cdots\wedge\widehat{\ovl{\xi_i}}
 \wedge\cdots\wedge
 \widehat{\ovl{\xi_j}}\wedge \cdots\wedge \ovl{\xi_d} \otimes v) \\ \nonumber
& \qquad +\sum_{1\leq j<i\leq d} (-1)^{j}
 \bigl(1\otimes \ovl{[\xi_i, \xi_j]}
 \wedge \ovl{\xi_1}\wedge\cdots\wedge\widehat{\ovl{\xi_j}}
 \wedge\cdots\wedge
 \widehat{\ovl{\xi_i}}\wedge \cdots\wedge \ovl{\xi_d} \otimes v)\biggr)\\
& +\sum_{1\leq i<j\leq d} (-1)^{i+j+1}(1\otimes \ovl{[\xi_i, \xi_j]}\wedge
 \ovl{\xi_1}\wedge\cdots
 \wedge\widehat{\ovl{\xi_i}}\wedge\cdots\wedge\widehat{\ovl{\xi_j}}\wedge
 \cdots\wedge \ovl{\xi_d}\otimes v) \\
=\ &\sum_{i=1}^d (-1)^{i+1} (\xi_i\otimes
 \ovl{\xi_1}\wedge\cdots\wedge\widehat{\ovl{\xi_i}}
 \wedge\cdots\wedge\ovl{\xi_d}
 \otimes v
 - 1\otimes \ovl{\xi_1}\wedge\cdots\wedge\widehat{\ovl{\xi_i}}
 \wedge\cdots\wedge\ovl{\xi_d}
 \otimes \xi_i v) \\
& +\sum_{1\leq i<j\leq d} (-1)^{i+j} \bigl(1\otimes \ovl{[\xi_i, \xi_j]}
 \wedge \ovl{\xi_1}\wedge\cdots\wedge
 \widehat{\ovl{\xi_i}}\wedge\cdots\wedge
 \widehat{\ovl{\xi_j}}\wedge \cdots\wedge \ovl{\xi_d} \otimes v) \\ \nonumber
=\ &\partial' (1\otimes m).
\end{align*}
Since $\partial$, $\partial'$ and $\varphi^d$ commute with $\frak{g}$-actions,
\[\partial(\varphi^d(D\otimes m))=D\partial(\varphi^d (1\otimes m))
 =D\varphi^{d-1}(\partial'(1\otimes m))
 =\varphi^{d-1}(\partial'(D\otimes m))\]
 for $D\in U(\frak{g})$.
Consequently, the diagram \eqref{diagram} commutes 
 and the proof of the theorem is complete.
\end{proof}

\section{Construction of modules}
\label{sec:const}
In this section,
 we will construct an $i^{-1}\wtl{\frak{g}}_X$-module
 ${\cal V}$ associated with a $(\frak{h}, M)$-module $V$,
 which can be used in Section \ref{sec:loc} for the realization of
 cohomologically induced modules.

Let ${\cal V}_Y$ be the $K$-equivariant quasi-coherent ${\cal O}_Y$-module
 with typical fiber the $M$-module $V$.
Let 
$p: {\cal O}_X\otimes_\bb{C} \frak{g}\to {\cal T}_X$
 be the map
 given by $f\otimes \xi\mapsto f\xi_X$
 and put ${\cal H}:=\ker\, p$.  
The ${\cal O}_X$-module ${\cal H}$ is 
 $G$-equivariant with typical fiber $\frak{h}$.
Hence a section $\xi \in {\cal H}$ 
 is identified with a $\frak{h}$-valued regular function on
 a subset of $G$ satisfying $\xi(gh)={\rm Ad}(h^{-1})(\xi(g))$ for
 $h\in H$.
Let $\xi,\xi' \in {\cal H}$.
By regarding $\wtl{\frak{g}}_X={\cal O}_X\otimes_\bb{C} \frak{g}$
 as a submodule of
 $U(\wtl{\frak{g}}_X)={\cal O}_X\otimes_\bb{C} U(\frak{g})$,
 we have
 $[\xi,\xi']:=\xi\xi'-\xi'\xi \in {\cal H}$
 and $[\xi,\xi'](g)=[\xi(g),\xi'(g)]$ with the identification above.
If we write $\xi=\sum_{i} f_i \otimes \xi_i$ for $f_i\in{\cal O}_X$
 and $\xi_i\in\frak{g}$, then 
 $\xi(g)=\sum_i f_i(g)\Ad(g^{-1})(\xi_i)$.

Let 
 ${\cal A}$ be the subalgebra of
 $i^{-1}U(\wtl{\frak{g}}_X)=i^{-1}{\cal O}_X \otimes U(\frak{g})$
 generated by 
 $i^{-1}{\cal H}$, $1 \otimes \frak{k}$, and $i^{-1}{\cal O}_X \otimes 1$.
We view $i^{-1}U(\wtl{\frak{g}}_X)$ as an $i^{-1}{\cal O}_X$-module
 and consider the inverse image
 ${\cal O}_Y \otimes_{i^{-1}{\cal O}_X} i^{-1}U(\wtl{\frak{g}}_X)
 (\simeq {\cal O}_Y \otimes U(\frak{g}))$ of $U(\wtl{\frak{g}}_X)$.
Let $\ovl{\cal A}$
 be the image of the map
 ${\cal O}_Y \otimes_{i^{-1}{\cal O}_X} {\cal A} \to 
{\cal O}_Y \otimes_{i^{-1}{\cal O}_X} i^{-1}U(\wtl{\frak{g}}_X)$
 so that $\ovl{\cal A}\simeq
 {\cal A}/
\bigl({\cal A} \cap \bigl(i^{-1}{\cal I}_Y \otimes U(\frak{g})\bigr)\bigr)$.
Since ${\cal A} \cdot (i^{-1}{\cal I}_Y\otimes U(\frak{g})) 
 \subset i^{-1}{\cal I}_Y\otimes U(\frak{g})$
 in the algebra $i^{-1}U(\wtl{\frak{g}}_X)$,
 the algebra structure of ${\cal A}$ induces
 that of $\ovl{\cal A}$, and 
 ${\cal O}_Y \otimes_{i^{-1}{\cal O}_X} i^{-1}U(\wtl{\frak{g}}_X)$
 becomes a left $\ovl{\cal A}$-module.

We give a left $\ovl{\cal A}$-module structure on ${\cal V}_Y$
 in the following way.
We view a local section of ${\cal V}_Y$ as a $V$-valued regular
 function on a subset of $K$ and 
define a $(1\otimes i^{-1}{\cal H})$-action
 and an $({\cal O}_Y\otimes 1)$-action by
\begin{align*}
&((1\otimes\xi) v)(k)=\xi(i(k))v(k), \\
&(f\otimes 1)v=fv
\end{align*}
for $\xi\in i^{-1}{\cal H}$, $v\in {\cal V}_Y$, $f\in {\cal O}_Y$,
 and $k\in K$;
define a $(1\otimes \frak{k})$-action on ${\cal V}_Y$ by differentiating
 the $K$-action on ${\cal V}_Y$.
These actions are compatible in the following sense:
if $f_i\in i^{-1}{\cal O}_X$, $\eta_i\in\frak{k}$
 and $\xi \in i^{-1}{\cal H}$ satisfy 
\[\sum_i(f_i\otimes \eta_i)- \xi \in i^{-1}{\cal I}_Y\otimes \frak{g}, \]
then we have 
\begin{align} 
\label{Hactcompati}
\sum_i (f_i|_Y \otimes 1)((1\otimes \eta_i) v) = (1\otimes \xi) v 
\end{align}
for $v\in {\cal V}_Y$.
In the proposition below,
 we will see that these actions give a well-defined
 $\ovl{\cal A}$-module structure.

Let ${\cal V}:=
{\cal H}om_{\ovl{\cal A}}
({\cal O}_Y \otimes_{i^{-1}{\cal O}_X} i^{-1}U(\wtl{\frak{g}}_X),\,
 {\cal V}_Y),$
namely, ${\cal V}$ 
 consists of the sections
 $v\in {\cal H}om_{\bb{C}}({\cal O}_Y \otimes_{i^{-1}{\cal O}_X}
 i^{-1}U(\wtl{\frak{g}}_X), {\cal V}_Y)$
 satisfying
\begin{align*}
&v((1\otimes \xi)(f\otimes D)) = (1\otimes \xi)(v(f\otimes D)), \\
&v((1\otimes \eta)(f\otimes D)) = (1\otimes \eta)(v(f\otimes D)),
 {\rm \ and\ }\\
&v(f'f\otimes D) = (f'\otimes 1) (v(f\otimes D))
\end{align*}
for $f,f'\in {\cal O}_Y$, $D\in U(\frak{g})$,
 $\eta\in \frak{k}$, and $\xi\in i^{-1}{\cal H}$.
We endow ${\cal V}$ with an $i^{-1}\wtl{\frak{g}}_X$-module structure by
giving $(f\otimes D)\cdot v$ as 
\begin{align*}
((f\otimes D)\cdot v)(f'\otimes D')=v(f' \otimes (1\otimes D')(f\otimes D))
\end{align*}
for $v\in {\cal V}$, 
 $f\in i^{-1}{\cal O}_X$,
 $f'\in{\cal O}_Y$, and $D, D'\in U(\frak{g})$.

\begin{prop}
\label{propconst}
Let $V$ be a $(\frak{h},M)$-module.
Then the left $\ovl{\cal A}$-action on ${\cal V}_Y$
 given above
 is well-defined,
 and the $i^{-1}\wtl{\frak{g}}_X$-module 
\[{\cal V}:=
{\cal H}om_{\ovl{\cal A}}
({\cal O}_Y \otimes_{i^{-1}{\cal O}_X} U(\wtl{\frak{g}}_X),\,
 {\cal V}_Y)\]
is associated with $V$ in the sense of Definition \ref{de:assmod}.
\end{prop}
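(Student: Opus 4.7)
The plan is to first check that the $\ovl{\cal A}$-action on ${\cal V}_Y$ specified by the formulas is well-defined, then to describe ${\cal V}={\cal H}om_{\ovl{\cal A}}({\cal O}_Y\otimes_{i^{-1}{\cal O}_X} i^{-1}U(\wtl{\frak{g}}_X),\,{\cal V}_Y)$ concretely by a PBW-style analysis, and finally to read off the five conditions of Definition~\ref{de:assmod} from that description.

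For the well-definedness, since $\ovl{\cal A}$ is generated by $i^{-1}{\cal H}$, $1\otimes\frak{k}$ and ${\cal O}_Y\otimes 1$, the task is to verify that the assigned actions satisfy every relation holding among these generators inside ${\cal O}_Y\otimes_\bb{C} U(\frak{g})$. The Leibniz identities such as $[1\otimes\eta,\,f\otimes 1]=\eta_Y(f)\otimes 1$ for $\eta\in\frak{k}$, $f\in{\cal O}_Y$ hold because the $\frak{k}$-action on ${\cal V}_Y$ is the differential of the $K$-action on the ${\cal O}_Y$-module ${\cal V}_Y$; brackets of sections of $i^{-1}{\cal H}$ follow from the $\frak{h}$-module structure of $V$; and the commutator $[1\otimes\eta,\,1\otimes\xi]$ for $\eta\in\frak{k}$, $\xi\in i^{-1}{\cal H}$ is handled by differentiating the $K$-equivariance of ${\cal H}$. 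The one delicate relation is \eqref{Hactcompati}: whenever $\sum_i f_i\otimes\eta_i-\xi\in i^{-1}{\cal I}_Y\otimes\frak{g}$ with $\eta_i\in\frak{k}$ and $\xi\in i^{-1}{\cal H}$, the two resulting expressions must agree. Unwinding the identification of local sections of ${\cal V}_Y$ with $V$-valued regular functions $f:K\to V$ satisfying $f(km)=m^{-1}f(k)$, this reduces to the statement that the differentiated $M$-action on $V$ coincides with the restriction to $\frak{m}$ of the $\frak{h}$-action, which is exactly the compatibility built into a $(\frak{h},M)$-module.

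For the concrete description of ${\cal V}$, I fix a vector-space complement $\frak{p}$ of $\frak{h}$ in $\frak{g}$ and invoke PBW to get $U(\frak{g})\simeq S(\frak{p})\otimes U(\frak{h})$ as vector spaces. Because the combined images of $i^{-1}{\cal H}$ and $1\otimes\frak{k}$ modulo $i^{-1}{\cal I}_Y\otimes\frak{g}$ exhaust the ``${\cal O}_Y\otimes\frak{h}$'' piece of ${\cal O}_Y\otimes\frak{g}$, an $\ovl{\cal A}$-homomorphism $v$ is determined by the ${\cal O}_Y$-linear data of the values $v(1\otimes D)$ for $D$ running through $S(\frak{p})$. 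A careful induction on the degree in $S(\frak{p})$, bookkeeping how the $\ovl{\cal A}$-relations identify values at different PBW representatives via both the $\frak{h}$-action on $V$ and tangential derivatives along $Y$, then produces a canonical $K$-equivariant filtration $F^\bullet{\cal V}$ with $F^p{\cal V}$ annihilated by $(i^{-1}{\cal I}_Y)^p$, whose successive quotients are locally free ${\cal O}_Y$-modules of the form ${\cal V}_Y\otimes_{{\cal O}_Y}{\cal G}_p$ for locally free sheaves ${\cal G}_p$ built functorially from the $M$-module $S(\frak{g}/\frak{h})$, and such that ${\cal V}/(i^{-1}{\cal I}_Y)^p{\cal V}\simeq {\cal V}/F^p{\cal V}$.

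With this in hand, the five conditions of Definition~\ref{de:assmod} follow. Condition (1) is tautological from the compatibility of the filtrations; (3) and (4) hold because every layer is built $K$-equivariantly from the $K$-module ${\cal V}_Y$ and the $M$-module $S(\frak{g}/\frak{h})$, with the $\frak{k}$-action arising as the differential of the $K$-action by construction; (2) follows because each successive quotient is locally free over ${\cal O}_Y$, making ${\cal V}/(i^{-1}{\cal I}_Y)^p{\cal V}$ a successive extension of flat ${\cal O}_{Y_p}$-modules; and (5) is obtained by evaluation $v\mapsto v(1\otimes 1)$ at the base point, which identifies ${\cal V}/(i^{-1}{\cal I}_o){\cal V}$ with the fiber $V$ of ${\cal V}_Y$ at $o$ and intertwines the $\frak{h}$- and $M$-actions by construction. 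I expect the main obstacle to be the PBW-based identification of the successive quotients: this is where the defining relations of $\ovl{\cal A}$ must be reconciled with the PBW choice, and the compatibility \eqref{Hactcompati} reappears as the crucial input that allows the inductive step to close.
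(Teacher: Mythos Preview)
Your overall strategy matches the paper's, but two steps do not go through as written.

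First, your argument for condition~(2) is incorrect. You deduce flatness of ${\cal V}/(i^{-1}{\cal I}_Y)^p{\cal V}$ over ${\cal O}_{Y_p}$ from the fact that the successive quotients of your filtration are flat (even locally free) over ${\cal O}_Y$. But a flat ${\cal O}_Y$-module is essentially never flat over ${\cal O}_{Y_p}$ for $p>1$: already ${\cal O}_Y$ itself satisfies $\mathrm{Tor}_1^{{\cal O}_{Y_p}}({\cal O}_Y,{\cal O}_Y)\simeq {\cal I}_Y/{\cal I}_Y^2\neq 0$. So an extension of flat ${\cal O}_Y$-modules need not be ${\cal O}_{Y_p}$-flat. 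The paper avoids this by exhibiting ${\cal V}/(i^{-1}{\cal I}_Y)^p{\cal V}$ directly as a (union of) \emph{free} ${\cal O}_{Y_p}$-modules, using the normal coordinates $\zeta_1,\dots,\zeta_l$ to write down an explicit ${\cal O}_{Y_p}$-basis; flatness then follows at once.

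Second, your PBW setup is miscalibrated. The image of $i^{-1}{\cal H}$ in ${\cal O}_Y\otimes\frak{g}$ is not ${\cal O}_Y\otimes\frak{h}$: at the point $kM$ its fibre is $\Ad(i(k))\frak{h}$, not $\frak{h}$. Moreover $\ovl{\cal A}$ also contains $1\otimes\frak{k}$, so the subspace of $\frak{g}$ that $\ovl{\cal A}$ already accounts for at $kM$ is $\Ad(i(k))\frak{h}+\frak{k}$. Thus ${\cal O}_Y\otimes U(\frak{g})$ is (locally) free over $\ovl{\cal A}$ on monomials in a complement to $\Ad(i(k_0))\frak{h}+\frak{k}$ in $\frak{g}$, i.e.\ on sections trivializing the normal bundle $N_{Y/X}$; this is why the paper chooses $\zeta_1,\dots,\zeta_l$ locally with $l=\dim\frak{g}-\dim(\frak{h}+\frak{k})$. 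A fixed global complement $\frak{p}$ of $\frak{h}$ has the wrong rank (it also contains the $\frak{k}/\frak{m}$-directions tangent to $Y$) and is not transverse to $\Ad(i(k))\frak{h}$ at all points. Consequently $1\otimes S(\frak{p})$ is not an $\ovl{\cal A}$-basis, the successive quotients are governed by $S^\bullet(N_{Y/X})$ rather than by the $M$-module $S(\frak{g}/\frak{h})$, and the ``careful induction'' you allude to cannot close without first passing to a local choice adapted to the normal bundle---which is exactly the paper's Lemma~\ref{atriv}.
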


\begin{proof}
Let $k_0\in K$ and $y_0:=k_0M \in Y$.
We fix a trivialization near $y_0$ in the following way.
Take sections $\xi_1,\dots,\xi_n \in i^{-1} {\cal H}$
 on a neighborhood $U$ of $y_0$ in $Y$ such that the map 
\begin{align*}
(i^{-1}{\cal O}_X)^{\oplus n}|_U \to
 (i^{-1}{\cal H})|_U,\quad 
(f_1,\dots,f_n)\mapsto \sum_{i=1}^n f_i \xi_i
\end{align*}
is an isomorphism.
Take elements $\eta_1,\dots,\eta_m\in \frak{k}$
such that they form a basis of the quotient space
 $\frak{k}/\Ad(k_0)(\frak{m})$
and take $\zeta_1,\dots,\zeta_l \in\frak{g}$ such that
$\eta_1,\dots,\eta_m,\zeta_1,\dots,\zeta_l$
 form a basis of the quotient space
 $\frak{g}/\Ad(i(k_0))\frak{h}$.
Replacing $U$ if necessary, we get an isomorphism
\begin{align}
\label{eqn:vbisom}
(i^{-1}{\cal O}_X)^{\oplus n+m+l}|_U &\to
 (i^{-1}{\cal O}_X\otimes_\bb{C} \frak{g})|_U,\\ \nonumber
(f_1,\dots,f_n,g_1,\dots,g_m,h_1,\dots,h_l)
&\mapsto \sum_{i=1}^n f_i \xi_i 
+\sum_{i=1}^m (g_i\otimes \eta_i)+\sum_{i=1}^l (h_i\otimes \zeta_i).
\end{align}
For integers $s, t\geq 0$, let
\[I_{s,t}:=\{\boldsymbol{i}=(i(1), \dots, i(s)):
 1\leq i(1) \leq \dots \leq i(s) \leq t \},
 \quad I_t := \coprod_{s=0}^\infty I_{s,t}. \]
If $s=0$, the set $I_{0,t}$ consists of one element $()$.
For ${\boldsymbol{i}}=(i(1), \dots, i(s)) \in I_{s,l}$, 
 we put $\zeta_{\boldsymbol{i}}:=1\otimes\zeta_{i(1)}\cdots\zeta_{i(s)}
 \in i^{-1}{\cal O}_X\otimes U(\frak{g})$.
If $s=0$ and ${\boldsymbol{i}}=()$
 then put $\zeta_{\boldsymbol{i}}:=1\otimes 1$.
In the same way,
 for ${\boldsymbol{i'}}=(i'(1),\dots,i'(s)) \in I_{s,n}$
 and ${\boldsymbol{i''}}=(i''(1),\dots,i''(s)) \in I_{s,m}$,
 put $\xi_{\boldsymbol{i'}}:=\xi_{i'(1)}\cdots\xi_{i'(s)}$ and
 $\eta_{\boldsymbol{i''}}:=1\otimes \eta_{i''(1)}\cdots\eta_{i''(s)}$. 
From the isomorphism \eqref{eqn:vbisom} and
 the Poincar$\rm{\acute{e}}$--Birkhoff--Witt theorem, 
we see that a section of $i^{-1}U(\wtl{\frak{g}}_X)|_U$
is uniquely written as 
\begin{align*}
\sum_{{\boldsymbol{i}} \in I_{l},\, {\boldsymbol{i}'} \in I_{n},\,
 {\boldsymbol{i''}} \in I_{m}}
 f_{\boldsymbol{i, i', i''}}
 \xi_{\boldsymbol{i'}}\eta_{\boldsymbol{i''}}\zeta_{\boldsymbol{i}},
\end{align*}
where $f_{\boldsymbol{i, i',i''}}
 \in i^{-1}{\cal O}_X$,
 and  $f_{\boldsymbol{i, i', i''}}=0$ except for
 finitely many $({\boldsymbol{i, i', i''}})$.
Hence a section of
 $({\cal O}_Y \otimes_{i^{-1}{\cal O}_X}i^{-1}U(\wtl{\frak{g}}_X))|_U$
is uniquely written as a finite sum
$\sum_{\boldsymbol{i, i', i''}}
 f_{\boldsymbol{i, i',i''}}
 \xi_{\boldsymbol{i'}}\eta_{\boldsymbol{i''}}\zeta_{\boldsymbol{i}}$
for $f_{\boldsymbol{i, i',i''}}\in {\cal O}_Y$.

\begin{lem}
\label{atriv}
The subsheaf $\ovl{\cal A}|_U$ of
 ${\cal O}_Y \otimes_{i^{-1}{\cal O}_X}
 i^{-1}U(\wtl{\frak{g}}_X)$
 consists of the sections written as a finite sum
\begin{align*}
\sum_{{\boldsymbol{i'}}\in I_n,\, {\boldsymbol{i''}}\in I_m}
 f_{\boldsymbol{i',i''}}\otimes \xi_{\boldsymbol{i'}}\eta_{\boldsymbol{i''}}
\end{align*}
for $f_{\boldsymbol{i',i''}}\in {\cal O}_Y$.
\end{lem}

\begin{proof}
It is enough to prove that
 for any section $a\in {\cal A}|_U$
 there exist functions $f_{\boldsymbol{i', i''}}\in i^{-1}{\cal O}_X$
 such that 
\begin{align}
\label{stdex}
a -\sum_{\boldsymbol{i', i''}} f_{\boldsymbol{i', i''}}
 \xi_{\boldsymbol{i'}}\eta_{\boldsymbol{i''}}
 \in i^{-1}{\cal I}_Y \otimes U(\frak{g}). 
\end{align}
For this we observe relations in the algebra $i^{-1}U(\wtl{\frak{g}}_X)$.
By our choice of $\xi_1,\dots,\xi_n$ and $\eta_1,\dots,\eta_m$,
 we can find $f_i, g_i \in i^{-1}{\cal O}_X$ for each $\eta\in\frak{k}$
 such that
\[
(1\otimes \eta)-
\biggl(\sum_{i=1}^n f_i\xi_i + \sum_{i=1}^m g_i\otimes \eta_i \biggr)
 \in i^{-1}{\cal I}_Y \otimes U(\frak{g}). \]
We also have 
\[
[\xi_i, f\otimes 1]=0,
\quad [1\otimes \eta, 1\otimes \eta']=1\otimes[\eta,\eta'], 
\quad [1\otimes \eta, f\otimes 1]=(\eta_X (f))\otimes 1
\]
for $f\in i^{-1}{\cal O}_X$, $\eta,\eta'\in\frak{k}$.
Further $[\xi_i,\xi_j], [1\otimes \eta_i, \xi_j]\in i^{-1}{\cal H}$
and hence there exist
 $f_{i,j,k}, g_{i,j,k}\in i^{-1}{\cal O}_X$ such that 
\begin{align*}
[\xi_i, \xi_j]=\sum_{k=1}^n f_{i,j,k} \xi_k,\quad
[1\otimes \eta_i, \xi_j]=\sum_{k=1}^n g_{i,j,k} \xi_k.
\end{align*}
Since ${\cal A}$ is generated by $i^{-1}{\cal H}$, $1\otimes \frak{k}$
 and $i^{-1}{\cal O}_X\otimes 1$, we can prove \eqref{stdex}
 by using these relations iteratively and
 using 
 ${\cal A}(i^{-1}{\cal I}_Y \otimes U(\frak{g}))
 \subset i^{-1}{\cal I}_Y\otimes U(\frak{g})$.
\end{proof}

From the lemma above and its proof,
 we see that the algebra $\ovl{\cal A}$ is generated by
 ${\cal O}_Y\otimes 1$, $1\otimes \xi_1,\dots, 1\otimes\xi_n$,
 and $1\otimes \frak{k}$ with the relations:
\begin{align*}
&1\otimes \eta=
\sum_{i=1}^n f_i\otimes \xi_i + \sum_{i=1}^m g_i\otimes \eta_i,\\
&[1\otimes \xi_i, f\otimes 1]=0,
\quad [1\otimes \eta, 1\otimes \eta']=1\otimes[\eta,\eta'], 
\quad [1\otimes \eta, f\otimes 1]=(\eta_Y (f))\otimes 1,\\
&[1\otimes \xi_i, 1\otimes \xi_j]=\sum_{k=1}^n f_{i,j,k} \otimes \xi_k,\quad
[1\otimes \eta_i, 1\otimes \xi_j]=\sum_{k=1}^n g_{i,j,k} \otimes \xi_k,
\end{align*}
where $f_i, g_i, f_{i,j,k}, g_{i,j,k}$ are the restrictions to $Y$
 of the corresponding functions in the proof of Lemma~\ref{atriv}
 and $f\in {\cal O}_Y$, $\eta, \eta'\in\frak{k}$.
We can check that these relations are compatible
 with the action on ${\cal V}_Y$ (see \eqref{Hactcompati})
 and hence the $\ovl{\cal A}$-action
 on ${\cal V}_Y$ is well-defined.

By Lemma~\ref{atriv},
$({\cal O}_Y\otimes_{i^{-1}{\cal O}_X}
 i^{-1}U(\wtl{\frak{g}}_X))|_U$
is a free $\ovl{\cal A}|_U$-algebra
with basis $1\otimes \zeta_{\boldsymbol{i}}$.
Therefore, the map 
\begin{align*}
\phi:{\cal V}|_U \to
\prod_{{\boldsymbol{i}} \in I_l}
{\cal V}_Y|_U
\end{align*}
given by
$\phi(v)=(v(1\otimes \zeta_{\boldsymbol{i}}))_{\boldsymbol{i}}$
is bijective.

Our choice of $\zeta_1,\dots,\zeta_l$ implies that
 they form a basis of the normal tangent bundle of $U$ in $X$.
Since $\phi$ is bijective, we see that 
\begin{align*}
\phi((i^{-1}{\cal I}_Y)^p {\cal V}|_U) =
\prod_{s=p}^\infty\  \prod_{{\boldsymbol{i}} \in I_{s,l}}
{\cal V}_Y|_U,
\end{align*}
and hence
\begin{align*}
({\cal V}/(i^{-1}{\cal I}_Y)^p {\cal V})|_U \simeq
\prod_{s=0}^{p-1}\ \prod_{{\boldsymbol{i}} \in I_{s,l}}
{\cal V}_Y|_U.
\end{align*}
If we endow the right side of the last isomorphism with
 ${\cal O}_{Y_p}$-module structure via the isomorphism, 
it is written as follows.
Let $f\in i^{-1}{\cal O}_X$ and
 $v=(v_{\boldsymbol{i}})_{\boldsymbol{i}}$.
For a subset $A \subset \{1,\dots, s\}$ with 
 $A=\{a(1),\dots, a(t)\}$, $a(1) < \dots < a(t)$ and
 for ${\boldsymbol{i}}=(i(1),\dots,i(s))\in I_{s,l}$, 
 let $\{b(1),\dots, b(s-t)\}=\{1,\dots, s\}\setminus A$ with
 $b(1) < \dots < b(s-t)$ and
 put ${\boldsymbol{i'}}:=(i(b(1)),\dots, i(b(s-t))) \in I_{s-t,l}$.
Then the ${\boldsymbol{i}}$-term of $f\cdot v$ is
 given as
\begin{align}
\label{eqn:modstr}
(f\cdot v)_{\boldsymbol{i}}=
\sum_{A\subset \{1,\dots, s \}}
((\zeta_{i(a(1))})_X\cdots(\zeta_{i(a(t))})_X f)|_U \cdot 
v_{{\boldsymbol{i}}'}.
\end{align}
On the right side here, we use the ${\cal O}_Y$-action on ${\cal V}_Y$.
This $i^{-1}{\cal O}_X$-action on
 $\prod_{s=0}^{p-1}\ \prod_{{\boldsymbol{i}} \in I_{s,l}}{\cal V}_Y|_U$
induces an ${\cal O}_{Y_p}$-action.

We now show that ${\cal V}/(i^{-1}{\cal I}_Y)^p{\cal V}$
 is a quasi-coherent and flat ${\cal O}_{Y_p}$-module.
Suppose first that ${\cal V}_Y|_U$ is a free ${\cal O}_U$-module on $U$ 
 so there exist sections $v_j\in \Gamma(U,{\cal V}_Y)$, $j\in J$
 such that the map
 ${\cal O}_U^{\oplus J} \to {\cal V}_Y|_U$,
 $(f_j)_{j\in J} \mapsto \sum_{j\in J} f_j v_j$
 is bijective.
We define the map
\begin{align*}
\psi:
({\cal O}_{Y_p}|_U)^{\oplus J}
\to \prod_{s=0}^{p-1}\ \prod_{{\boldsymbol{i}}\in I_{s,l}}
{\cal V}_Y|_U
\end{align*}
by giving the ${\boldsymbol{i}}$-term of $\psi(f)$
 for ${\boldsymbol{i}}=(i(1),\dots,i(s))\in I_{s,l}$ and $f=(f_j)_{j\in J}$ as
\begin{align*}
\psi(f)_{\boldsymbol{i}}
=\sum_{j\in J}
((\zeta_{i (1)})_X\cdots(\zeta_{i (s)})_X  f_j)|_U \cdot v_j.
\end{align*}
Then $\psi$ is an isomorphism of
 ${\cal O}_{Y_p}|_U$-modules and hence
 $({\cal V}/(i^{-1}{\cal I}_Y)^p{\cal V})|_U$ is a free
 ${\cal O}_{Y_p}|_U$-module.

For general case, we write $V$ as a union of finite-dimensional
 $M$-submodules: $V=\bigcup_\alpha V^\alpha$.
Then the $K$-equivariant quasi-coherent ${\cal O}_Y$-module ${\cal V}^\alpha_Y$
  with fiber $V^\alpha$ is locally free.
If we define the ${\cal O}_{Y_p}$-module structure on
$\prod_{s=0}^{p-1}\ \prod_{{\boldsymbol{i}}\in I_{s,l}}
{\cal V}^\alpha_Y|_U$
 as in \eqref{eqn:modstr},
then the preceding argument proves that
 it is a locally free ${\cal O}_{Y_p}|_U$-module.
Since ${\cal V}_Y$ is the union of ${\cal V}^\alpha_Y$, we see that
 $({\cal V}/(i^{-1}{\cal I}_Y)^p{\cal V})|_U$ is isomorphic to the union of 
$\prod_{s=0}^{p-1}\ \prod_{{\boldsymbol{i}}\in I_{s,l}}
{\cal V}^\alpha_Y|_U$ as an ${\cal O}_{Y_p}|_U$-module. 
Hence
 ${\cal V}/(i^{-1}{\cal I}_Y)^p{\cal V}$ is
 a quasi-coherent and flat 
 ${\cal O}_{Y_p}$-module.

We define a $K$-action on ${\cal V}$ by 
\[(k\cdot v)(f\otimes D)=
k\cdot(v((k^{-1}\cdot f)\otimes \Ad(i(k)^{-1})D))
\]
for $k\in K$, $v\in {\cal V}$, $f\in {\cal O}_Y$,
 and $D\in U(\frak{g})$.
This action descends to a $K$-action on
 ${\cal V}/(i^{-1}{\cal I}_Y)^p{\cal V}$ and 
 makes it
 a $K$-equivariant ${\cal O}_{Y_p}$-module.
From this definition,
 it immediately follows that
 the maps
${\cal V}/(i^{-1}{\cal I}_Y)^p{\cal V}\to 
 {\cal V}/(i^{-1}{\cal I}_Y)^{p-1}{\cal V}$
and 
$i^{-1}\wtl{\frak{g}}_X \otimes {\cal V}/(i^{-1}{\cal I}_Y)^p{\cal V}
 \to{\cal V}/(i^{-1}{\cal I}_Y)^{p-1}{\cal V}$
commute with $K$-actions for all $p>0$.

We have checked conditions (1), (2) and (3) of Definition \ref{de:assmod}.
We can verify the condition (4) by computing the $\frak{k}$-action as 
\begin{align*}
(\eta \cdot v)(f\otimes D)&=v(f\otimes D\eta)\\
&=-v(f \otimes [\eta,D])+v((1 \otimes \eta)(f\otimes D))
-v((\eta_Y (f))\otimes D)\\
&=-v(f \otimes [\eta, D])+(1 \otimes \eta)(v(f\otimes D))
-v((\eta_Y (f)) \otimes D)
\end{align*}
for $\eta\in \frak{k}$, $v\in {\cal V}$, $f\in {\cal O}_Y$, and
 $D\in U(\frak{g})$.

For the condition (5), we get an isomorphism of vector spaces
 $\iota:{\cal V}/(i^{-1}{\cal I}_o){\cal V}\simeq V$
 by taking fiber of the isomorphism
 $\phi:{\cal V}/(i^{-1}{\cal I}_Y){\cal V}\simeq {\cal V}_Y$
 at $o$.
The map $\iota$ is written as
 $\iota(v)=(v(1\otimes 1))(e)$ for $v\in {\cal V}$.
For $\xi\in\frak{h}$, there exists a section
 $\xi' \in i^{-1}{\cal H}$ near the base point $o$ such that
 $1\otimes \xi - \xi' \in i^{-1}{\cal I}_o\otimes \frak{g}$, or equivalently,
 $\xi'(e)=\xi$.
Then 
\begin{align*}
\iota(\xi v)=((\xi v)(1\otimes 1))(e)
=(v(1\otimes \xi))(e)
=(v(\xi'))(e)=\xi (v(1\otimes 1)(e))
=\xi \iota(v).
\end{align*}
Moreover, we have
\begin{align*}
\iota(m v)=((m v)(1\otimes 1))(e)
=(m(v(1\otimes 1)))(e)
=m (v(1\otimes 1)(e))
=m \iota (v)
\end{align*}
for $m\in M$
and hence $\iota$ commutes with $(\frak{h},M)$-actions.
\end{proof}

\begin{rem}
The $i^{-1}\wtl{\frak{g}}_X$-module ${\cal V}$
 constructed above in this section
 has the following universal property.
If ${\cal V}'$ is another $i^{-1}\wtl{\frak{g}}_X$-module
 associated with $V$, then there exists a canonical map
 ${\cal V}'\to {\cal V}$
 such that the induced map
\[V\simeq {\cal V}'/(i^{-1}{\cal I}_o){\cal V}'
 \to {\cal V}/(i^{-1}{\cal I}_o){\cal V}\simeq V\]
 is the identity map.
Moreover, it also induces an isomorphism
\[{\cal V}'/(i^{-1}{\cal I}_Y)^p{\cal V}'
\to{\cal V}/(i^{-1}{\cal I}_Y)^p{\cal V}
\]
for any $p\in\bb{N}$.
Therefore, the tensor product
 $i^{-1}i_+{\cal L}\otimes_{i^{-1}{\cal O}_X} {\cal V}'$
 does not depend on the choice of ${\cal V}'$
 up to canonical isomorphism.
We will give another description of the $i^{-1}\wtl{\frak{g}}_X$-module
 $i^{-1}i_+{\cal L}\otimes_{i^{-1}{\cal O}_X} {\cal V}$
 in Proposition~\ref{prop:isom}.
\end{rem}

\section{Twisted ${\cal D}$-modules}
\label{sec:tdo}

Retain the notation of the previous sections.
Let $V$ be a $(\frak{h},M)$-module and ${\cal V}$
 an $i^{-1}\wtl{\frak{g}}_X$-module associated with $V$.
Since ${\cal V}/(i^{-1}{\cal I}_Y){\cal V}$
 is a $K$-equivariant quasi-coherent ${\cal O}_Y$-module
 with typical fiber $V$, there is a canonical isomorphism
 ${\cal V}/(i^{-1}{\cal I}_Y){\cal V}\simeq {\cal V}_Y$.
We view ${\cal H}:= \ker\, (p:{\cal O}_X \otimes \frak{g} \to {\cal T}_X)$
 as a subsheaf of $U(\wtl{\frak{g}}_X)$.
Since ${\cal H}({\cal I}_Y\otimes U(\frak{g})) \subset
 {\cal I}_Y\otimes U(\frak{g})$, the $i^{-1}{\cal H}$-action on 
 ${\cal V}$ induces one on ${\cal V}/ (i^{-1}{\cal I}_Y) {\cal V}$.
By regarding local sections of these equivariant modules as
 vector-valued regular functions, this action is written as
\begin{align}
\label{Haction}
(\xi v)(k)=\xi(i(k))v(k)
\end{align}
for $\xi \in i^{-1}{\cal H}$, $v\in {\cal V}$ and $k\in K$.
Indeed, since the action map
 $i^{-1}{\cal H}\otimes {\cal V}/(i^{-1}{\cal I}_Y){\cal V}
 \to {\cal V}/(i^{-1}{\cal I}_Y){\cal V}$
 commutes with $K$-actions by Definition~\ref{de:assmod} (3),
 it is enough to prove \eqref{Haction} for $k=e$.
This follows from
 ${\cal H}({\cal I}_o \otimes U(\frak{g}))
\subset {\cal I}_o\otimes U(\frak{g})$
 and Definition~\ref{de:assmod} (5).

The ${\cal O}_Y$-modules ${\cal L}$, ${\cal V}_Y$, $\Omega_Y$,
 and $i^*\Omega_X\spcheck$ are $K$-equivariant with typical fiber
 $\bigwedge^{\rm top}(\frak{k}/\frak{l})$, $V$,
 $\bigwedge^{\rm top}(\frak{k}/\frak{m})^*$,
 and $\bigwedge^{\rm top}(\frak{g}/\frak{h})$, respectively.
Hence the tensor product
 ${\cal L}\otimes_{{\cal O}_Y} {\cal V}_Y
 \otimes_{{\cal O}_Y} \Omega_Y \otimes_{{\cal O}_Y} i^*\Omega_X\spcheck$
 is also $K$-equivariant and has typical fiber
 $\bigwedge^{\rm top}(\frak{k}/\frak{l})\otimes 
 V\otimes \bigwedge^{\rm top}(\frak{k}/\frak{m})^*\otimes 
\bigwedge^{\rm top}(\frak{g}/\frak{h})$.
We give a right
 $i^{-1}{\cal H}$-module structure,
 a right $\frak{k}$-module structure,
 and a right ${\cal O}_Y$-module structure on
 the sheaf ${\cal L} \otimes_{{\cal O}_Y} 
 {\cal V}_Y\otimes_{{\cal O}_Y} \Omega_Y \otimes_{{\cal O}_Y}
 i^{*}\Omega_X\spcheck$
 by
\begin{align*}
&((f\otimes v\otimes \omega \otimes \omega')\xi)(k)
 = - f(k) \otimes (\xi(i(k))v(k)) \otimes \omega(k) \otimes \omega'(k)\\
&\qquad\qquad\qquad\qquad\qquad
  - f(k) \otimes v(k) \otimes \omega(k) \otimes \ad(\xi(i(k)))\omega'(k), \\
&(f\otimes v\otimes \omega \otimes \omega')\eta 
 = -(\eta f)\otimes v\otimes \omega \otimes \omega' 
 -f\otimes(\eta v)\otimes \omega \otimes \omega' \\
&\qquad\qquad\qquad\qquad\qquad
 - f\otimes v\otimes (\eta\omega) \otimes \omega' 
 - f\otimes v\otimes \omega \otimes (\eta\omega'), \\
&(f\otimes v\otimes \omega \otimes \omega')f'
 =f'f\otimes v\otimes \omega \otimes \omega'
\end{align*}
for $f\in {\cal L}$, $\xi\in i^{-1}{\cal H}$, $\eta\in\frak{k}$,
 $v\in {\cal V}_Y$, $\omega\in \Omega_Y$, 
 $\omega'\in i^{*}\Omega_X\spcheck$,
 $f'\in {\cal O}_Y$, and $k\in K$.
These actions are compatible:
if $f_i\in i^{-1}{\cal O}_X$, $\eta_i\in\frak{k}$
 and $\xi \in i^{-1}{\cal H}$ satisfy 
\[\sum_i(f_i \otimes \eta_i)- \xi \in
 i^{-1}{\cal I}_Y\otimes U(\frak{g}),\]
then we have 
\begin{align*} 
\sum_i\bigl((f\otimes v\otimes \omega \otimes \omega')f_i|_Y\bigr) \eta_i
 = (f\otimes v\otimes \omega \otimes \omega')\xi.  
\end{align*}
Therefore, we can prove in the same way
 as in Section~\ref{sec:const} that
 these actions define a right $\ovl{\cal A}$-module
 structure on 
 ${\cal L} \otimes_{{\cal O}_Y} 
 {\cal V}_Y\otimes_{{\cal O}_Y} \Omega_Y \otimes_{{\cal O}_Y}
 i^{*}\Omega_X\spcheck$.

By using this right $\ovl{\cal A}$-module structure,
 we consider the sheaf
\[({\cal L} \otimes_{{\cal O}_Y} 
 {\cal V}_Y\otimes_{{\cal O}_Y} \Omega_Y \otimes_{{\cal O}_Y}
 i^{*}\Omega_X\spcheck)
\otimes_{\ovl{\cal A}} 
({\cal O}_Y \otimes_{i^{-1}{\cal O}_X} i^{-1}U(\wtl{\frak{g}}_X)),\]
which has
 a right $i^{-1}\wtl{\frak{g}}_X$-module structure.
We view it as a left $i^{-1}\wtl{\frak{g}}_X$-module
 via the anti-isomorphism
\begin{align*}
S:U(\wtl{\frak{g}}_X)\to U(\wtl{\frak{g}}_X),\quad 
f\otimes 1 \mapsto f\otimes 1,\quad 1\otimes \xi \mapsto -1\otimes \xi
\end{align*}
for $f\in {\cal O}_X$, $\xi\in\frak{g}$.

\begin{prop}
\label{prop:isom}
Let ${\cal L}$ be as in Section \ref{sec:loc}.
Let ${\cal V}$ be an $i^{-1}\wtl{\frak{g}}_X$-module
 associated with a $(\frak{h},M)$-module $V$.
Then there exists a $K$-equivariant
 isomorphism of $i^{-1}\wtl{\frak{g}}_X$-modules 
\begin{align*}
i^{-1}i_+{\cal L}\otimes_{i^{-1}{\cal O}_X} {\cal V}\simeq 
({\cal L} \otimes_{{\cal O}_Y} 
 {\cal V}_Y\otimes_{{\cal O}_Y} \Omega_Y \otimes_{{\cal O}_Y}
 i^{*}\Omega_X\spcheck)
\otimes_{\ovl{\cal A}} 
({\cal O}_Y \otimes_{i^{-1}{\cal O}_X} i^{-1}U(\wtl{\frak{g}}_X)).
\end{align*}
\end{prop}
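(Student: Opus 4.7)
The approach is to construct an explicit $K$-equivariant and $i^{-1}\wtl{\frak{g}}_X$-linear map from the right-hand side to the left-hand side, and verify it is bijective by a local computation. First, I unpack the left-hand side using the definition of the direct image:
\[i^{-1}i_+{\cal L}\otimes_{i^{-1}{\cal O}_X}{\cal V} \simeq ({\cal L}\otimes_{{\cal O}_Y}\Omega_Y)\otimes_{{\cal D}_Y} i^*{\cal D}_X\otimes_{i^{-1}{\cal O}_X}i^{-1}\Omega_X\spcheck\otimes_{i^{-1}{\cal O}_X}{\cal V}.\]
The isomorphism \eqref{ugdiso}, together with the identification ${\cal O}_Y\otimes_{i^{-1}{\cal O}_X}i^{-1}U(\wtl{\frak{g}}_X)\simeq i^{-1}U(\wtl{\frak{g}}_X)/(i^{-1}{\cal I}_Y)i^{-1}U(\wtl{\frak{g}}_X)$, presents $i^*{\cal D}_X$ as the quotient of ${\cal O}_Y\otimes_{i^{-1}{\cal O}_X}i^{-1}U(\wtl{\frak{g}}_X)$ by the right ideal generated by $i^{-1}{\cal H}$. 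I then define the map on pure tensors by
\[(f\otimes v\otimes\omega\otimes\omega')\otimes(g\otimes D) \longmapsto (gf\otimes\omega)\otimes_{{\cal D}_Y}(1\otimes 1)\otimes\omega'\otimes S(D)\tilde v,\]
where $\tilde v\in{\cal V}$ is any local lift of $v\in{\cal V}_Y\simeq{\cal V}/(i^{-1}{\cal I}_Y){\cal V}$ and $S$ is the antipode of $U(\frak{g})$.

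The central step is verifying well-definedness. Independence of the lift $\tilde v$ holds because any two lifts differ by an element of $(i^{-1}{\cal I}_Y){\cal V}$; an $(i^{-1}{\cal I}_Y)$-factor next to ${\cal V}$ can be shuffled across $i^*{\cal D}_X$ using the fact, recorded in Section~\ref{sec:const}, that $i^{-1}{\cal H}$ commutes with $i^{-1}{\cal O}_X$ inside $U(\wtl{\frak{g}}_X)$, after which the resulting $(i^{-1}{\cal I}_Y)$-factor is absorbed by ${\cal L}\otimes_{{\cal O}_Y}\Omega_Y$ through the ${\cal D}_Y$-balancing. Descent across the $\ovl{\cal A}$-tensor is then checked separately on each of the three types of generators of $\ovl{\cal A}$, namely ${\cal O}_Y\otimes 1$, $1\otimes i^{-1}{\cal H}$, and $1\otimes\frak{k}$. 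The minus signs and the $\ad$-action on $i^*\Omega_X\spcheck$ that appear in the right $\ovl{\cal A}$-action on the first factor of the RHS are exactly what is produced by the antipode $S$ in the last slot combined with the natural $\frak{g}$-action on $i^{-1}\Omega_X\spcheck$ in the LHS.

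For bijectivity I work locally, using the trivialization from the proof of Proposition~\ref{propconst}: bases $\xi_1,\ldots,\xi_n$ of $i^{-1}{\cal H}$, $\eta_1,\ldots,\eta_m$ of $\frak{k}/\Ad(k_0)\frak{m}$, and $\zeta_1,\ldots,\zeta_l$ completing them to a basis of $\frak{g}/\Ad(i(k_0))\frak{h}$. On the RHS, Lemma~\ref{atriv} exhibits ${\cal O}_Y\otimes_{i^{-1}{\cal O}_X}i^{-1}U(\wtl{\frak{g}}_X)$ as a free left $\ovl{\cal A}$-module on the monomials $\zeta_{\boldsymbol{i}}$, $\boldsymbol{i}\in I_l$. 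On the LHS, the analogous free description of $i^*{\cal D}_X$ over ${\cal D}_Y$ by the same monomials, combined with the product decomposition of ${\cal V}$ from Section~\ref{sec:const}, produces an identical local description, and matching expansions term by term yields bijectivity. Finally, $K$-equivariance and $i^{-1}\wtl{\frak{g}}_X$-linearity follow from the $K$-equivariance of the constituents and from the definition of the left $i^{-1}\wtl{\frak{g}}_X$-action on the RHS as right multiplication on the last slot twisted by $S$.

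The main obstacle is the well-definedness verification in the second paragraph, which requires careful bookkeeping of the interplay between the antipode $S$, the $\ad$-action on $i^*\Omega_X\spcheck$, and the commutation of $i^{-1}{\cal H}$ with $i^{-1}{\cal O}_X$ inside $U(\wtl{\frak{g}}_X)$.
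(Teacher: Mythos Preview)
Your overall strategy matches the paper's: build an explicit map, check it descends over $\ovl{\cal A}$, and verify bijectivity locally via the PBW-type basis $\zeta_{\boldsymbol{i}}$. However, the formula you wrote down is not well-defined, and this is a genuine gap rather than a bookkeeping nuisance.

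In your display, $S(D)$ acts only on the lift $\tilde v\in{\cal V}$, while the $i^*{\cal D}_X$-slot is frozen at $1\otimes 1$. Take $D=\xi\in\frak g$ and replace $\tilde v$ by $\tilde v+w$ with $w=\sum_j f_j v_j$, $f_j\in i^{-1}{\cal I}_Y$. Then $S(\xi)w=-\xi w$ contains the term $-\sum_j(\xi_X f_j)v_j$, and $(\xi_X f_j)|_Y$ is a normal derivative which need not vanish. After moving this function across the $\otimes_{i^{-1}{\cal O}_X}$ into $i^*{\cal D}_X$ you obtain $-(gf\otimes\omega)\otimes_{{\cal D}_Y}((\xi_X f_j)|_Y\otimes 1)\otimes\omega'\otimes v_j$, which is not zero in general. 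Your shuffling argument does not rescue this: the $i^*{\cal D}_X$-slot is $1\otimes 1$, so there is no $i^{-1}{\cal H}$-factor there to commute past, and an ${\cal I}_Y$-function cannot be ``absorbed'' through the ${\cal D}_Y$-balancing because ${\cal D}_Y$ only sees ${\cal O}_Y$. The same defect breaks your claimed $i^{-1}\wtl{\frak g}_X$-linearity, since with your formula the LHS sees only the ${\cal V}$-half of the diagonal $\frak g$-action.

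The fix is exactly what the paper does: first identify $F_0i^{-1}i_+{\cal L}\otimes_{i^{-1}{\cal O}_X}{\cal V}$ with ${\cal V}'_Y:={\cal L}\otimes_{{\cal O}_Y}{\cal V}_Y\otimes_{{\cal O}_Y}\Omega_Y\otimes_{{\cal O}_Y}i^*\Omega_X\spcheck$ via $\psi_0$ (this uses \eqref{filiso} and is where the lift $\tilde v$ is made, once and for all, in degree zero), and then send $w\otimes(1\otimes(f\otimes D))$ to $S(f\otimes D)\cdot\psi_0(w)$, where $S(f\otimes D)$ acts through the \emph{diagonal} $i^{-1}\wtl{\frak g}_X$-action on $i^{-1}i_+{\cal L}\otimes_{i^{-1}{\cal O}_X}{\cal V}$. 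Now independence of the lift is automatic (it was settled at the $\psi_0$ stage), $i^{-1}\wtl{\frak g}_X$-linearity is tautological, and the descent over $\ovl{\cal A}$ reduces to checking that the right $\ovl{\cal A}$-action on ${\cal V}'_Y$ matches $-(\text{diagonal action})$ on $\psi_0({\cal V}'_Y)\subset F_0i^{-1}i_+{\cal L}\otimes{\cal V}$; this is where the $\ad$-term on $i^*\Omega_X\spcheck$ and the formula \eqref{Haction} enter. For bijectivity the paper compares the filtration ${\cal V}_p:=\pi({\cal V}'_Y\otimes({\cal O}_Y\otimes U_p(\frak g)))$ with $F_pi^{-1}i_+{\cal L}\otimes{\cal V}$ and matches successive quotients via the $\zeta_{\boldsymbol i}$; your direct local matching would also work once the map is corrected.
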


\begin{proof}
Let $F_p i^{-1}i_+{\cal L}$ be the filtration of $i^{-1}i_+{\cal L}$ 
 as in Section \ref{sec:loc}.
Then 
 $F_0i^{-1}i_+{\cal L}\otimes_{i^{-1}{\cal O}_X}{\cal V}$
 is regarded as a subsheaf of 
 $i^{-1}i_+{\cal L}\otimes_{i^{-1}{\cal O}_X}{\cal V}$
 (see Remark~\ref{rem:gkact}).
We have 
\begin{align*}
F_0i^{-1}i_+{\cal L}\otimes_{i^{-1}{\cal O}_X}{\cal V}
&\simeq 
F_0i^{-1}i_+{\cal L}\otimes_{{\cal O}_Y}{\cal V}/(i^{-1}{\cal I}_Y) {\cal V}\\
&\simeq
{\cal L}\otimes_{{\cal O}_Y} \Omega_Y \otimes_{{\cal O}_Y} i^*\Omega_X\spcheck
\otimes_{{\cal O}_Y} {\cal V}/(i^{-1}{\cal I}_Y){\cal V}.
\end{align*}
Therefore, we get an isomorphism of $K$-equivariant ${\cal O}_Y$-modules
\begin{align}
\label{F0isom}
\psi_0:\ 
&{\cal L}\otimes_{{\cal O}_Y} {\cal V}_Y  \otimes_{{\cal O}_Y}
\Omega_Y \otimes_{{\cal O}_Y} i^*\Omega_X\spcheck
\xrightarrow{\sim} 
F_0i^{-1}i_+{\cal L}\otimes_{i^{-1}{\cal O}_X}{\cal V},
 \\ \nonumber
&f\otimes v\otimes \omega \otimes \omega'
\mapsto (f\otimes \omega \otimes \omega')\otimes v.
\end{align}
Here $v\in {\cal V}_Y$ and we choose a section of ${\cal V}$
 that is sent to 
 $v\in {\cal V}_Y\simeq {\cal V}/(i^{-1}{\cal I}_Y){\cal V}$
 by the quotient map, which we denote by the same letter $v\in {\cal V}$.
Write ${\cal V}'_Y:=
{\cal L}\otimes_{{\cal O}_Y} {\cal V}_Y  \otimes_{{\cal O}_Y}
\Omega_Y \otimes_{{\cal O}_Y} i^*\Omega_X\spcheck$
for simplicity.
The isomorphism \eqref{F0isom}
 extends to the homomorphism of
 $i^{-1}\wtl{\frak{g}}_X$-modules
\begin{align*}
\psi:\ 
&{\cal V}'_Y
\otimes_\bb{C}
 ({\cal O}_Y \otimes_{i^{-1}{\cal O}_X} i^{-1}U(\wtl{\frak{g}}_X))
\to 
i^{-1}i_+{\cal L}\otimes_{i^{-1}{\cal O}_X}{\cal V},\\ \nonumber
&v \otimes (1\otimes (f\otimes D)) \mapsto S(f\otimes D)\cdot \psi_0(v). 
\end{align*}
We can check that
 the map $\psi$ descends to 
\begin{align*}
\ovl{\psi}:
{\cal V}'_Y \otimes_{\ovl{\cal A}}
 ({\cal O}_Y \otimes_{i^{-1}{\cal O}_X}i^{-1}U(\wtl{\frak{g}}_X))
\to 
i^{-1}i_+{\cal L}\otimes_{i^{-1}{\cal O}_X}{\cal V}.
\end{align*}
Let
\begin{align*}
\pi:
{\cal V}'_Y \otimes_\bb{C}
 ({\cal O}_Y \otimes_{i^{-1}{\cal O}_X}i^{-1}U(\wtl{\frak{g}}_X))
\to
{\cal V}'_Y \otimes_{\ovl{\cal A}}
 ({\cal O}_Y \otimes_{i^{-1}{\cal O}_X}i^{-1}U(\wtl{\frak{g}}_X))
\end{align*}
be the quotient map and put 
\begin{align*}
{\cal V}_p:=
\pi\left( 
{\cal V}'_Y \otimes_\bb{C} ({\cal O}_Y\otimes_\bb{C} U_p(\frak{g}))
\right),
\end{align*}
 where $\{U_p(\frak{g})\}_{p\in \bb{N}}$
 is the standard filtration of $U(\frak{g})$.
We have 
\begin{align*}
\ovl{\psi}({\cal V}_p)
=\psi({\cal V}'_Y \otimes_\bb{C} ({\cal O}_Y\otimes_\bb{C} U_p(\frak{g}))
\subset F_pi^{-1}i_+{\cal L}\otimes_{i^{-1}{\cal O}_X}{\cal V}.
\end{align*}
Let us take an open set $U\subset Y$ and elements
 $\zeta_1,\dots,\zeta_l\in\frak{g}$ as in the proof of
 Proposition~\ref{propconst} and use the same notation.
Then by an argument similar to the proof of Proposition~\ref{propconst},
 we obtain a bijective map of sheaves
\begin{align*}
&\prod_{s=0}^p \prod_{{\boldsymbol{i}}\in I_{s,l}}
 {\cal V}_Y'|_U \simeq {\cal V}_p|_U,\\
& (v_{\boldsymbol{i}})_{\boldsymbol{i}} \mapsto \sum_{\boldsymbol{i}}
 \pi(v_{\boldsymbol{i}} \otimes (1\otimes \zeta_{\boldsymbol{i}}))
\end{align*}
and hence we have 
\begin{align*}
\prod_{{\boldsymbol{i}}\in I_{p,l}}
 {\cal V}_Y'|_U \simeq {\cal V}_p/{\cal V}_{p-1}|_U.
\end{align*}
We also see that
\begin{align*}
(F_p i^{-1}i_+{\cal L}\otimes_{i^{-1}{\cal O}_X} {\cal V})/
(F_{p-1} i^{-1}i_+{\cal L}\otimes_{i^{-1}{\cal O}_X} {\cal V})
\simeq
 (F_p i^{-1}i_+{\cal L}/F_{p-1} i^{-1}i_+{\cal L})
 \otimes_{{\cal O}_Y} {\cal V}_Y
\end{align*}
and 
\begin{align*}
F_p i^{-1}i_+{\cal L}/F_{p-1} i^{-1}i_+{\cal L}
\simeq {\cal L} \otimes_{{\cal O}_Y}
\Omega_Y \otimes_{{\cal O}_Y} i^*\Omega_X\spcheck
\otimes_{{\cal O}_Y} i^{-1}(({\cal I}_Y)^p/({\cal I}_Y)^{p+1})
\end{align*}
by \cite[Lemma 3.3]{Os11}. 
Since $\zeta_{\boldsymbol{i}}$ for ${\boldsymbol{i}} \in I_{p,l}$
 give a trivialization
 of $i^{-1}(({\cal I}_Y)^p/({\cal I}_Y)^{p+1})$, we conclude that
 the map
\begin{align*}
{\cal V}_p/{\cal V}_{p-1} \to 
(F_p i^{-1}i_+{\cal L}\otimes {\cal V})/
(F_{p-1} i^{-1}i_+{\cal L}\otimes {\cal V})
\end{align*}
 induced by $\ovl{\psi}$ on the successive quotient is an isomorphism.
Therefore the map $\ovl{\psi}$ is also an isomorphism.
We can also see that $\ovl{\psi}$ commutes with $K$-action.
Hence the proposition follows.
\end{proof}

Let $\lambda\in\frak{h}^*$ such that
 $\Ad^*(h)\lambda=\lambda$ for $h\in H$. 
For a section $\xi\in {\cal H}$, 
 we define a function $f_{\xi, \lambda}\in {\cal O}_X$ as 
\[
f_{\xi, \lambda}(gH)=\lambda(\xi(g)).
\]
Let ${\cal I}_\lambda$ be the two-sided ideal of
 the sheaf $U(\wtl{\frak{g}}_X)={\cal O}_X\otimes U(\frak{g})$ generated by
$\xi-(f_{\xi, \lambda}\otimes 1)$ for all $\xi\in{\cal H}$.
We define the ring of twisted differential operators as
\[
{\cal D}_{X,\lambda}
:=U(\wtl{\frak{g}}_X)/{\cal I}_\lambda.
\]
Let $\mu:=\lambda|_\frak{m}$
 and define ${\cal D}_{Y,\mu}$ similarly.
Then we can define the direct image of
 a left ${\cal D}_{Y,\mu}$-module ${\cal M}$ by
\begin{align*}
i_+{\cal M}:= i_*\bigl(({\cal M}\otimes_{{\cal O}_Y} \Omega_Y)
 \otimes_{{\cal D}_{Y,-\mu}}
 i^*{\cal D}_{X, -\lambda}\bigr) \otimes_{{\cal O}_X} \Omega_X\spcheck.
\end{align*}

Suppose that $V$ is a $(\frak{h},M)$-module
 and $\frak{h}$ acts on $V$ by $\lambda\in\frak{h}^*$.
The $K$-equivariant ${\cal O}_Y$-module
 ${\cal L} \otimes_{{\cal O}_Y} {\cal V}_Y$
 has a natural structure of left ${\cal D}_{Y,\mu}$-module.
Therefore, we can define the direct image
 $i_+({\cal L}\otimes_{{\cal O}_Y} {\cal V}_Y)$
 as a left ${\cal D}_{X,\lambda}$-module.

\begin{prop}
\label{prop:tdo}
Suppose that $V$ is a $(\frak{h},M)$-module and $\frak{h}$
 acts on $V$ by $\lambda\in\frak{h}^*$
 such that $\Ad^*(h)\lambda=\lambda$ for $h\in H$.
Let ${\cal V}$ be an $i^{-1}\wtl{\frak{g}}_X$-module associated with $V$.
Then we have a $K$-equivariant isomorphism of $i^{-1}\wtl{\frak{g}}_X$-modules
\begin{align*}
i^{-1}i_+{\cal L}\otimes_{i^{-1}{\cal O}_X} {\cal V}
\simeq i^{-1}i_+({\cal L}\otimes_{{\cal O}_Y} {\cal V}_Y).
\end{align*}
\end{prop}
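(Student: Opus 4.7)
The strategy is to apply Proposition~\ref{prop:isom} and then reinterpret the resulting tensor product, under the hypothesis that $\frak{h}$ acts on $V$ by the scalar $\lambda$, as the twisted ${\cal D}$-module pushforward on the right. Proposition~\ref{prop:isom} already supplies a $K$-equivariant isomorphism
\[
i^{-1}i_+{\cal L}\otimes_{i^{-1}{\cal O}_X}{\cal V} \simeq
({\cal L}\otimes_{{\cal O}_Y}{\cal V}_Y\otimes_{{\cal O}_Y}\Omega_Y\otimes_{{\cal O}_Y} i^*\Omega_X\spcheck)\otimes_{\ovl{\cal A}}({\cal O}_Y\otimes_{i^{-1}{\cal O}_X} i^{-1}U(\wtl{\frak{g}}_X)),
\]
so the task reduces to collapsing the $\ovl{\cal A}$-tensor product on the right into $i^{-1}i_+({\cal L}\otimes_{{\cal O}_Y}{\cal V}_Y)$.

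The next step is to pin down the right $i^{-1}{\cal H}$-action on the sheaf factor ${\cal L}\otimes{\cal V}_Y\otimes\Omega_Y\otimes i^*\Omega_X\spcheck$ using the formulas introduced just before Proposition~\ref{prop:isom}. A section $\xi\in i^{-1}{\cal H}$ acts trivially on ${\cal L}$ and $\Omega_Y$, on ${\cal V}_Y$ as multiplication by $\lambda(\xi(i(\cdot)))=f_{\xi,\lambda}|_Y$ in view of \eqref{Haction}, and on $i^*\Omega_X\spcheck$ via the trace of $\ad\xi(i(\cdot))$ on $\frak{g}/\frak{h}$. By the $\Ad(H)$-invariance of $\lambda$, the combined action is scalar multiplication by a single function of the form $-f_{\xi,\lambda'}|_Y$ for an explicit $\lambda'\in\frak{h}^*$ determined by $\lambda$ together with the adjoint trace on $\bigwedge^{\rm top}(\frak{g}/\frak{h})$.

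The core step is then to collapse the $\ovl{\cal A}$-tensor into the twisted pushforward. Since $\xi\in i^{-1}{\cal H}$ acts on the sheaf factor by the scalar $-f_{\xi,\lambda'}|_Y$ and, through the anti-automorphism $S$, on ${\cal O}_Y\otimes i^{-1}U(\wtl{\frak{g}}_X)$ by left multiplication by $-\xi$, tensoring over $\ovl{\cal A}$ forces the image of $\xi+f_{\xi,\lambda'}$ to annihilate the right factor; these are precisely the generators of the ideal ${\cal I}_{-\lambda}\subset U(\wtl{\frak{g}}_X)$ defining ${\cal D}_{X,-\lambda}$, once the discrepancy between $\lambda$ and $\lambda'$ is absorbed by relocating $i^*\Omega_X\spcheck$ from the middle factor to the right end of the tensor chain through its compatible right $i^{-1}U(\wtl{\frak{g}}_X)$-structure. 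Simultaneously, the $1\otimes\frak{k}$ and ${\cal O}_Y\otimes 1$ relations of $\ovl{\cal A}$ yield exactly the ${\cal D}_{Y,-\mu}$-tensor appearing in the definition of $i_+$. Putting these together, the tensor product becomes
\[
({\cal L}\otimes_{{\cal O}_Y}{\cal V}_Y\otimes_{{\cal O}_Y}\Omega_Y)\otimes_{{\cal D}_{Y,-\mu}}i^*{\cal D}_{X,-\lambda}\otimes_{i^{-1}{\cal O}_X}i^{-1}\Omega_X\spcheck,
\]
which is $i^{-1}i_+({\cal L}\otimes_{{\cal O}_Y}{\cal V}_Y)$ by definition; $K$-equivariance is automatic since every construction used is $K$-equivariant.

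The main obstacle I expect is the bookkeeping of signs and canonical twists: one must verify that the character by which $i^{-1}{\cal H}$ acts on the sheaf side, mixing the $\lambda$-action on $V$ with the $\ad$-action on $\bigwedge^{\rm top}(\frak{g}/\frak{h})$, translates after passage through $S$ and the relocation of $i^*\Omega_X\spcheck$ into the precise defining relations of ${\cal D}_{X,-\lambda}$ rather than some twist thereof. Concretely, the identities $\Omega_X\spcheck|_Y\simeq i^*\Omega_X\spcheck$ and the compatibility of the trace of $\ad$ on $\frak{g}/\frak{h}$ with the shift $\lambda'-\lambda$ must be tracked through the tensor identities to ensure the two sides match on the nose.
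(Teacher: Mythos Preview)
Your approach is correct in outline but takes a different route from the paper's. You start from Proposition~\ref{prop:isom} and then try to unwind the $\ovl{\cal A}$-tensor product directly into the twisted pushforward by matching the $i^{-1}{\cal H}$-relations with the generators of ${\cal I}_{-\lambda}$ and the $1\otimes\frak{k}$, ${\cal O}_Y$-relations with the ${\cal D}_{Y,-\mu}$-tensor. The paper instead observes that the entire filtration argument proving Proposition~\ref{prop:isom} applies verbatim to the twisted pushforward $i^{-1}i_+({\cal L}\otimes_{{\cal O}_Y}{\cal V}_Y)$: one introduces $F_p i^{-1}i_+({\cal L}\otimes_{{\cal O}_Y}{\cal V}_Y)$, notes that its $F_0$-piece is again ${\cal V}'_Y$ carrying the \emph{same} right $\ovl{\cal A}$-module structure, builds the map ${\cal V}'_Y\otimes_\bb{C}({\cal O}_Y\otimes i^{-1}U(\wtl{\frak{g}}_X))\to i^{-1}i_+({\cal L}\otimes_{{\cal O}_Y}{\cal V}_Y)$, and checks on associated gradeds that it descends to an isomorphism from ${\cal V}'_Y\otimes_{\ovl{\cal A}}(\cdots)$. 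Both sides are thus isomorphic to the same intermediate object, and Proposition~\ref{prop:isom} finishes the proof.

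The advantage of the paper's route is precisely that it sidesteps the obstacle you yourself identify: there is no need to track how the character $\lambda'$ (mixing $\lambda$ with the trace of $\ad$ on $\frak{g}/\frak{h}$) passes through $S$ and the relocation of $i^*\Omega_X\spcheck$, nor to verify directly that the $\ovl{\cal A}$-relations split cleanly into the ${\cal D}_{X,-\lambda}$-quotient and the ${\cal D}_{Y,-\mu}$-tensor. Your approach makes this explicit, which is informative but requires the bookkeeping you flag; the paper's approach buys that identification for free by reusing the already-proved machinery of Proposition~\ref{prop:isom}, since the $\ovl{\cal A}$-module structure on ${\cal V}'_Y$ is defined intrinsically by the formulas preceding Proposition~\ref{prop:isom} and does not depend on which of the two pushforwards one is computing.
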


\begin{proof}
We define a filtration
$F_pi^{-1}i_+({\cal L}\otimes_{{\cal O}_Y} {\cal V}_Y)$ of
 $i^{-1}i_+({\cal L}\otimes_{{\cal O}_Y} {\cal V}_Y)$ in the same way
 as $F_pi^{-1}i_+{\cal L}$.
Then
\[
F_0i^{-1}i_+({\cal L}\otimes_{{\cal O}_Y} {\cal V}_Y)\simeq 
{\cal L}\otimes_{{\cal O}_Y} {\cal V}_Y
 \otimes_{{\cal O}_Y} \Omega_Y \otimes_{{\cal O}_Y} i^*\Omega_X\spcheck.
\]
By using the same argument as in Proposition~\ref{prop:isom},
we define a map of $i^{-1}\wtl{\frak{g}}_X$-modules
\[
{\cal V}'_Y\otimes_{\bb{C}}
 ({\cal O}_Y\otimes_{i^{-1}{\cal O}_X}i^{-1}U(\wtl{\frak{g}}_X))
\to i^{-1}i_+ ({\cal L}\otimes_{{\cal O}_Y}{\cal V}_Y)
\]
and we see that it induces an isomorphism
\[
{\cal V}'_Y\otimes_{\ovl{\cal A}}
 ({\cal O}_Y\otimes_{i^{-1}{\cal O}_X}i^{-1}U(\wtl{\frak{g}}_X))
\simeq i^{-1}i_+ ({\cal L}\otimes_{{\cal O}_Y}{\cal V}_Y).
\]
Hence 
\[i^{-1}i_+{\cal L} \otimes_{i^{-1}{\cal O}_X} {\cal V}
\simeq i^{-1}i_+ ({\cal L}\otimes_{{\cal O}_Y}{\cal V}_Y)
\]
by Proposition~\ref{prop:isom}.
\end{proof}

Recall that ${\cal L}$ is the $K$-equivariant invertible sheaf
 on $Y=K/M$ with typical fiber
 $\bigwedge^{\rm top}(\frak{k}/\frak{l})$.
We view a one-dimensional vector space
 $\bigwedge^{\rm top}(\frak{k}/\frak{l})^*$
 as a $(\frak{h},M)$-module in the following way:
 $\frak{h}$ acts as zero; the Levi component
 $L$ of $M$ acts as the coadjoint action $\bigwedge {\Ad}^*$;
 the unipotent radical $U$ of $M$ acts trivially.
Let ${\cal L}'$ be an $i^{-1}\wtl{\frak{g}}_X$-module
 associated with $\bigwedge^{\rm top}(\frak{k}/\frak{l})^*$.
Then ${\cal L}'/ (i^{-1}{\cal I}_Y) {\cal L}'$
 is isomorphic to the dual of ${\cal L}$.
Therefore, by Proposition~\ref{prop:tdo} we have
\[i^{-1}i_+{\cal L} \otimes_{i^{-1}{\cal O}_X}
 {\cal V} \otimes_{i^{-1}{\cal O}_X} {\cal L}'
 \simeq i^{-1}i_+{\cal V}_Y.\]
Example~\ref{tensor} shows that
the $i^{-1}\wtl{\frak{g}}_X$-module 
 ${\cal V} \otimes_{i^{-1}{\cal O}_X} {\cal L}'$
is associated with $V\otimes \bigwedge^{\rm top}(\frak{k}/\frak{l})^*$.
\begin{thm}
\label{thm:tdo}
In Setting~\ref{setting}, we assume that $K$ is reductive.
Suppose that $V$ is a $(\frak{h},M)$-module and $\frak{h}$
 acts on $V$  by $\lambda\in\frak{h}^*$
 such that $\Ad^*(h)\lambda=\lambda$ for $h\in H$.
Let $M=L\ltimes U$ be a Levi decomposition.
Then
\begin{align*}
{\rm H}^s (Y, i^{-1}i_+{\cal V}_Y)
 &\simeq (P_{\frak{h}, L}^{\frak{g},K})_{u-s}
 \Bigl(V \otimes \bigwedge^{\rm top}(\frak{k}/\frak{l})^*
 \otimes \bigwedge^{\rm top}(\frak{g}/\frak{h})\Bigr)\\
&\simeq (I_{\frak{g},L}^{\frak{g},K})^{y+s}
 P_{\frak{h}, L}^{\frak{g},L}
 \Bigl(V  \otimes \bigwedge^{\rm top}(\frak{g}/\frak{h})\Bigr)
\end{align*}
 for $s\in \bb{N}$, $u=\dim U$, and $y=\dim Y$.
\end{thm}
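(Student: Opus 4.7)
The first isomorphism will follow directly from Theorem~\ref{loccoh} applied to a modified input. Set $V':=V\otimes\bigwedge^{\rm top}(\frak{k}/\frak{l})^*$; since $\frak{h}$ is declared to act trivially on $\bigwedge^{\rm top}(\frak{k}/\frak{l})^*$, the module $V'$ is again a $(\frak{h},M)$-module on which $\frak{h}$ acts by $\lambda$. By Example~\ref{tensor}, the tensor product ${\cal V}\otimes_{i^{-1}{\cal O}_X}{\cal L}'$ is an $i^{-1}\wtl{\frak{g}}_X$-module associated with $V'$. Feeding this pair into Theorem~\ref{loccoh} and combining with the identification
\[
i^{-1}i_+{\cal L}\otimes_{i^{-1}{\cal O}_X}{\cal V}\otimes_{i^{-1}{\cal O}_X}{\cal L}'\simeq i^{-1}i_+{\cal V}_Y
\]
(a consequence of Proposition~\ref{prop:tdo} recorded just before the statement) then immediately yields the first displayed isomorphism.

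For the second isomorphism the plan is to factor $P_{\frak{h},L}^{\frak{g},K}$ through the intermediate pair $(\frak{g},L)$ and then apply Zuckerman's Hard Duality. The functor $P_{\frak{h},L}^{\frak{g},L}:{\cal C}(\frak{h},L)\to{\cal C}(\frak{g},L)$ is exact---this is standard from \cite{KnVo} and depends only on the Poincar\'e--Birkhoff--Witt decomposition of $U(\frak{g})$ over $U(\frak{h})$---so transitivity of induction upgrades to an isomorphism of derived functors
\[
(P_{\frak{h},L}^{\frak{g},K})_j\simeq (P_{\frak{g},L}^{\frak{g},K})_j\circ P_{\frak{h},L}^{\frak{g},L}.
\]
Hard Duality from \cite{KnVo} then provides, for every $(\frak{g},L)$-module $W$, a natural isomorphism
\[
(P_{\frak{g},L}^{\frak{g},K})_j(W)\simeq (I_{\frak{g},L}^{\frak{g},K})^{(y+u)-j}\Bigl(W\otimes\bigwedge^{\rm top}(\frak{k}/\frak{l})\Bigr),
\]
using $\dim K/L=\dim K/M+\dim M/L=y+u$. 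Specializing to $j=u-s$ produces the advertised degree $y+s$.

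It then remains to pull the factor $\bigwedge^{\rm top}(\frak{k}/\frak{l})$ inside $P_{\frak{h},L}^{\frak{g},L}$ via the projection formula of \cite{KnVo}, after which it cancels against the $\bigwedge^{\rm top}(\frak{k}/\frak{l})^*$ appearing in $V'$: both factors carry trivial $\frak{h}$-action, and the $L$-actions are mutually inverse characters, so their tensor product is the trivial $(\frak{h},L)$-module. The right-hand side collapses to $(I_{\frak{g},L}^{\frak{g},K})^{y+s}P_{\frak{h},L}^{\frak{g},L}(V\otimes\bigwedge^{\rm top}(\frak{g}/\frak{h}))$, which is the desired form. The main obstacle is purely organizational: reconciling the several conventions for the $(\frak{g},L)$-module structure on $\bigwedge^{\rm top}(\frak{k}/\frak{l})$ used in Hard Duality and in the projection formula, and verifying that $P_{\frak{h},L}^{\frak{g},L}$ is indeed exact in the present generality; no further geometric input is needed beyond what has been established in the previous sections.
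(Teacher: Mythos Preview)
Your proposal is correct and follows essentially the same route as the paper: the first isomorphism comes from Theorem~\ref{loccoh} applied to ${\cal V}\otimes_{i^{-1}{\cal O}_X}{\cal L}'$ together with the identification from Proposition~\ref{prop:tdo}, and the second from factoring $(P_{\frak{h},L}^{\frak{g},K})_{u-s}\simeq (P_{\frak{g},L}^{\frak{g},K})_{u-s}\circ P_{\frak{h},L}^{\frak{g},L}$ and invoking Hard Duality \cite[Theorem~3.5]{KnVo}. The paper states the duality in the equivalent form $(P_{\frak{g},L}^{\frak{g},K})_{\dim(K/L)-s}(\,\cdot\,\otimes\bigwedge^{\rm top}(\frak{k}/\frak{l})^*)\simeq(I_{\frak{g},L}^{\frak{g},K})^{s}(\,\cdot\,)$ and leaves the projection-formula step implicit, whereas you spell it out; there is no substantive difference.
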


\begin{proof}
The first isomorphism follows from Theorem~\ref{loccoh} and
 the argument above.
Since the functor $P_{\frak{h}, L}^{\frak{g},L}$ is exact,
 $(P_{\frak{h}, L}^{\frak{g},K})_{u-s}\simeq
 (P_{\frak{g}, L}^{\frak{g},K})_{u-s}
 \circ P_{\frak{h}, L}^{\frak{g},L}$.
Hence the duality (\cite[Theorem 3.5]{KnVo})
\[(P_{\frak{g}, L}^{\frak{g},K})_{\dim (K/L)-s}\Bigl(\, \cdot \otimes
  \bigwedge^{\rm top}(\frak{k}/\frak{l})^*\Bigr)
 \simeq (I_{\frak{g}, L}^{\frak{g},K})^{s}(\cdot)\]
 and $\dim K/L= \dim U + \dim Y$
 give the second isomorphism.
\end{proof}

By Theorem~\ref{thm:tdo} we obtain the convergence of spectral sequence
\begin{align}
\label{spseq}
{\rm H}^s (X, R^t i_+{\cal V}_Y)
 &\Rightarrow (P_{\frak{h}, L}^{\frak{g},K})_{u-s-t}
 \Bigl(V \otimes \bigwedge^{\rm top}(\frak{k}/\frak{l})^*
 \otimes \bigwedge^{\rm top}(\frak{g}/\frak{h})\Bigr)\\ \nonumber
&\simeq (I_{\frak{g},L}^{\frak{g},K})^{y+s+t}
 P_{\frak{h}, L}^{\frak{g},L}
 \Bigl(V  \otimes \bigwedge^{\rm top}(\frak{g}/\frak{h})\Bigr).
\end{align}
Here $R^t i_+$ is the higher direct image functor for a twisted
 left ${\cal D}$-module.

We now see that this spectral sequence implies results
 of \cite{HMSW} and \cite{Kit}.

\begin{ex}
Let $G_\bb{R}$ be a connected real semisimple Lie group with a maximal
 compact subgroup $K_\bb{R}$ and the complexified Lie algebra $\frak{g}$.
Let $K$ be the complexification of $K_\bb{R}$ and $G$
 the inner automorphism group of $\frak{g}$.
There is a canonical homomorphism $i:K\to G$, which has finite kernel.
Suppose that $H$ is a Borel subgroup of $G$. 
Let us apply Setting~\ref{setting}.
Then $X=G/H$ is the full flag variety of $\frak{g}$.
Since $L$ is abelian and $K$ is connected, $L$ acts trivially on
 $\bigwedge^{\rm top}(\frak{k}/\frak{l})^*$.
Moreover in this case it is known that $Y$ is affinely embedded in $X$.
Therefore, $R^ti_+ \simeq 0$ for $t>0$ and 
 the spectral sequence \eqref{spseq} collapses.
We thus get \eqref{eq:HMSWisom} and hence the duality theorem
 (Theorem~\ref{dual}).
\end{ex}

\begin{ex}
Let $G_\bb{R}$ be a connected real semisimple Lie group with a maximal
 compact subgroup $K_\bb{R}$.
We define $K$, $G$, and $i:K\to G$ as in the previous example.
Suppose that $H$ is a parabolic subgroup of $G$
 and apply Setting~\ref{setting}.
Then $X=G/H$ is a partial flag variety of $\frak{g}$.
In this case $Y$ is not necessarily affinely embedded in $X$.
Let $\wtl{X}$ be the full flag variety of $\frak{g}$ and
 let $p:\wtl{X}\to X$ be the natural surjective map.
Then we have an isomorphism
 ${\rm H}^s(\wtl{X}, p^*{\cal M}) \simeq {\rm H}^s(X, {\cal M})$
 for any ${\cal O}_X$-module ${\cal M}$.
Hence \eqref{spseq} becomes
\begin{align*}
{\rm H}^s (\wtl{X}, p^* R^t i_+{\cal V}_Y)
 \Rightarrow (I_{\frak{g},L}^{\frak{g},K})^{y+s+t}
 P_{\frak{h}, L}^{\frak{g},L}
 \Bigl(V  \otimes \bigwedge^{\rm top}(\frak{g}/\frak{h})\Bigr),
\end{align*}
 which is \cite[Theorem 25 (6.6)]{Kit}.
\end{ex}

Let $V$ be any $(\frak{h},M)$-module
 and ${\cal V}$ an $i^{-1}\wtl{\frak{g}}_X$-module
 associated with $V$.
Since
 $i^{-1}i_+{\cal L} \otimes_{i^{-1}{\cal O}_X}
 {\cal L}' 
 \simeq i^{-1}i_+{\cal O}_Y$,
we have 
\[i^{-1}i_+{\cal L} \otimes_{i^{-1}{\cal O}_X}
 {\cal L}' \otimes_{i^{-1}{\cal O}_X} {\cal V}
 \simeq i^{-1}i_+{\cal O}_Y \otimes_{i^{-1}{\cal O}_X} {\cal V}.\]
We can thus rewrite Theorem~\ref{loccoh} as 
\begin{thm}
\label{loccoh2}
In Setting~\ref{setting}, we assume that $K$ is reductive.
Let $M=L\ltimes U$ be a Levi decomposition.
Suppose that $V$ is a $(\frak{h}, M)$-module and 
 that ${\cal V}$ is an $i^{-1}\wtl{\frak{g}}_X$-module
 associated with $V$ (Definition~\ref{de:assmod}).
Then
\begin{align*}
{\rm H}^s (Y, i^{-1}i_+{\cal O}_Y
 \otimes_{i^{-1}{\cal O}_X} {\cal V})
 &\simeq (P_{\frak{h}, L}^{\frak{g},K})_{u-s}
 \Bigl(V \otimes \bigwedge^{\rm top}(\frak{k}/\frak{l})^*
 \otimes \bigwedge^{\rm top}(\frak{g}/\frak{h})\Bigr)\\
 &\simeq (I_{\frak{g},L}^{\frak{g},K})^{y+s}
 P_{\frak{h}, L}^{\frak{g},L}
 \Bigl(V \otimes \bigwedge^{\rm top}(\frak{g}/\frak{h})\Bigr)
\end{align*}
for $s\in \bb{N}$, $u=\dim U$, $y=\dim Y$.
\end{thm}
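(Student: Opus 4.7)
The strategy is simply to repackage Theorem~\ref{loccoh} by twisting the associated module $\cal V$ by a carefully chosen character sheaf, and then to translate the resulting $P$-functor into an $I$-functor via the Hard Duality Theorem from \cite{KnVo}. The twist is by the $i^{-1}\wtl{\frak{g}}_X$-module $\cal L'$ associated with the one-dimensional $(\frak{h},M)$-module $\bigwedge^{\rm top}(\frak{k}/\frak{l})^*$ (on which $\frak{h}$ acts by the zero character, $L$ acts by the coadjoint representation, and $U$ acts trivially), exactly as introduced just before Theorem~\ref{thm:tdo}.

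\emph{Step 1.} I would first establish the basic identity $i^{-1}i_+{\cal L} \otimes_{i^{-1}{\cal O}_X} {\cal L}' \simeq i^{-1}i_+{\cal O}_Y$. Because $\frak{h}$ acts by zero on $\bigwedge^{\rm top}(\frak{k}/\frak{l})^*$, Proposition~\ref{prop:tdo} applies to $\cal L'$ with $\lambda = 0$ and produces an isomorphism $i^{-1}i_+{\cal L}\otimes {\cal L}' \simeq i^{-1}i_+({\cal L}\otimes_{{\cal O}_Y} {\cal L}^*)$, and the right-hand side is $i^{-1}i_+{\cal O}_Y$. \emph{Step 2.} By Example~\ref{tensor}, the tensor product ${\cal V}\otimes_{i^{-1}{\cal O}_X}{\cal L}'$ is an $i^{-1}\wtl{\frak{g}}_X$-module associated with $V\otimes\bigwedge^{\rm top}(\frak{k}/\frak{l})^*$. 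Applying Theorem~\ref{loccoh} to this twisted associated module and then substituting the Step~1 identity on the left, I obtain the first isomorphism of the statement. \emph{Step 3.} For the second isomorphism I would factor $P_{\frak{h},L}^{\frak{g},K}=P_{\frak{g},L}^{\frak{g},K}\circ P_{\frak{h},L}^{\frak{g},L}$; the functor $P_{\frak{h},L}^{\frak{g},L}(V)\simeq U(\frak{g})\otimes_{U(\frak{h})}V$ is exact by PBW, so $(P_{\frak{h},L}^{\frak{g},K})_j\simeq(P_{\frak{g},L}^{\frak{g},K})_j\circ P_{\frak{h},L}^{\frak{g},L}$. Since $\bigwedge^{\rm top}(\frak{k}/\frak{l})^*$ is an $L$-module it slides out of $P_{\frak{h},L}^{\frak{g},L}$, and the Hard Duality \cite[Theorem~3.5]{KnVo} converts the resulting $(P_{\frak{g},L}^{\frak{g},K})_{u-s}(\,\cdot\otimes\bigwedge^{\rm top}(\frak{k}/\frak{l})^*)$ into $(I_{\frak{g},L}^{\frak{g},K})^{\dim(K/L)-(u-s)}(\cdot)$, and $\dim K/L=u+y$ gives the correct cohomological degree $y+s$.

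\emph{Main obstacle.} Almost the entire content is already built into Theorem~\ref{loccoh}; the remaining work is genuinely formal, and the whole proof fits in a few lines. The only nontrivial input is the Step~1 identity, which rests on Proposition~\ref{prop:tdo} and therefore silently uses the fact that the shift $\cal L'$ exists as a module associated to a character of $\frak{h}$. The reason this particular twist is the correct one is pinned down by the Hard Duality: the factor $\bigwedge^{\rm top}(\frak{k}/\frak{l})^*$ that must appear on the $P$-side to produce an $I$-functor is exactly the character on which $\frak{h}$ acts by zero, which is what lets Proposition~\ref{prop:tdo} kick in. Once this compatibility between the geometric side (trivializing ${\cal L}\otimes{\cal L}'\simeq{\cal O}_Y$) and the algebraic side (Hard Duality) is recognized, no further computation is needed.
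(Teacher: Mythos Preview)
Your proposal is correct and matches the paper's argument essentially line for line: the paper obtains the identity $i^{-1}i_+{\cal L}\otimes_{i^{-1}{\cal O}_X}{\cal L}'\simeq i^{-1}i_+{\cal O}_Y$ from Proposition~\ref{prop:tdo} with $\lambda=0$, tensors with ${\cal V}$, invokes Example~\ref{tensor} and Theorem~\ref{loccoh} for the first isomorphism, and then derives the second isomorphism from the factorization $(P_{\frak{h},L}^{\frak{g},K})_{u-s}\simeq (P_{\frak{g},L}^{\frak{g},K})_{u-s}\circ P_{\frak{h},L}^{\frak{g},L}$ together with the Hard Duality \cite[Theorem~3.5]{KnVo} and $\dim K/L=u+y$. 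Your additional remark that $\bigwedge^{\rm top}(\frak{k}/\frak{l})^*$ slides out of $P_{\frak{h},L}^{\frak{g},L}$ makes explicit a step the paper leaves implicit (it is justified because the identity component of $L$, and hence $\frak{l}\subset\frak{h}$, acts trivially on this one-dimensional module).
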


\subsection*{Acknowledgements}
The author was supported by Grant-in-Aid for JSPS Fellows (10J00710).

\renewcommand{\MR}[1]{}

\def\cprime{$'$}
\providecommand{\bysame}{\leavevmode\hbox to3em{\hrulefill}\thinspace}
\providecommand{\MR}{\relax\ifhmode\unskip\space\fi MR }
\providecommand{\MRhref}[2]{%
  \href{http://www.ams.org/mathscinet-getitem?mr=#1}{#2}
}
\providecommand{\href}[2]{#2}



\begin{thebibliography}{HMSW87}

\bibitem[BB81]{BB81}
A.~Be{\u\i}linson and J.~Bernstein,
 \emph{Localisation de {$g$}-modules}, C. R.
  Acad. Sci. Paris S\'er. I Math. \textbf{292} (1981), no.~1, 15--18.
  \MR{610137 (82k:14015)}

\bibitem[BB93]{BB93}
\bysame, \emph{A proof of {J}antzen conjectures}, I. {M}. {G}el\cprime fand
  {S}eminar, Adv. Soviet Math., vol.~16, Amer. Math. Soc., Providence, RI,
  1993, pp.~1--50. \MR{1237825 (95a:22022)}

\bibitem[Bie90]{Bien}
F.~V. Bien, \emph{{${\mathcal D}$}-modules and spherical representations},
  Mathematical Notes, vol.~39, Princeton University Press, Princeton, NJ, 1990.
  \MR{1082342 (92f:22025)}

\bibitem[BL95]{BeLu}
J.~Bernstein and V.~Lunts, \href{http://dx.doi.org/10.2307/2152830}{\emph{Localization for derived categories of
  {$({\frak {g}},K)$}-modules}}, J. Amer. Math. Soc. \textbf{8} (1995), no.~4,
  819--856. \MR{1317229 (95m:17004)}

\bibitem[Cha93]{Chang}
J.-T. Chang, \href{http://dx.doi.org/10.2307/2159114}{\emph{Remarks on localization and standard modules: the duality theorem on}}
\href{http://dx.doi.org/10.2307/2159114}{\emph{a generalized flag variety}}, Proc. Amer. Math. Soc. \textbf{117}
  (1993), no.~3, 585--591. \MR{1145942 (93d:22016)}

\bibitem[HMSW87]{HMSW}
H.~Hecht, D.~Mili{\v{c}}i{\'c}, W.~Schmid, and J.~A. Wolf, \href{http://dx.doi.org/10.1007/BF01388707}{\emph{Localization
  and standard modules}}
\href{http://dx.doi.org/10.1007/BF01388707}{\emph{for real semisimple {L}ie groups. {I}. {T}he duality
  theorem}}, Invent. Math. \textbf{90} (1987), no.~2, 297--332. \MR{910203
  (89e:22025)}

\bibitem[Kit10]{Kit}
S.~N. Kitchen, \emph{Localization of cohomologically induced modules to partial
  flag varieties}, ProQuest LLC, Ann Arbor, MI, 2010, Thesis (Ph.D.)--The
  University of Utah. \MR{2736734}

\bibitem[KV95]{KnVo}
A.~W. Knapp and D.~A. Vogan, Jr., \emph{Cohomological induction and unitary
  representations}, Princeton Mathematical Series, vol.~45, Princeton
  University Press, Princeton, NJ, 1995. \MR{1330919 (96c:22023)}

\bibitem[MP98]{MiPa}
D.~Mili{\v{c}}i{\'c} and P.~Pand{\v{z}}i{\'c}, \emph{Equivariant derived
  categories, {Z}uckerman functors and localization}, Geometry and
  representation theory of real and {$p$}-adic groups ({C}\'ordoba, 1995),
  Progr. Math., vol. 158, Birkh\"auser Boston, Boston, MA, 1998, pp.~209--242.
  \MR{1486143 (2000f:22018)}

\bibitem[Osh11]{Os11}
Y.~Oshima, \href{http://arxiv.org/abs/1107.2833}{\emph{On the restriction of {Z}uckerman's derived functor modules
  ${A}_\frak{q}(\lambda)$ to}}
\href{http://arxiv.org/abs/1107.2833}{\emph{reductive subgroups}}, Preprint, {arXiv:1107.2833}.

\bibitem[Sch91]{Sch91}
W.~Schmid, \emph{Construction and classification of irreducible
  {H}arish-{C}handra modules}, Harmonic analysis on reductive groups
  ({B}runswick, {ME}, 1989), Progr. Math., vol. 101, Birkh\"auser Boston,
  Boston, MA, 1991, pp.~235--275. \MR{1168487 (93j:22028)}

\end{thebibliography}

\end{document}